\documentclass[11pt]{article} 

\usepackage[parfill]{parskip} 
\usepackage{graphicx}
\usepackage{algorithm,algorithmic}
\usepackage{amsmath, amssymb, amsthm, bm,bbm}
\usepackage{url}
\usepackage{fullpage, prettyref}
\usepackage{boxedminipage}
\usepackage{wrapfig}
\usepackage{ifthen}
\usepackage{xcolor}
\usepackage{framed}
\usepackage{algorithmic,algorithm}
\usepackage{enumitem}
\newtheorem{theorem}{Theorem}

\newtheorem{lemma}{Lemma}
\newtheorem{claim}{Claim}

\newtheorem{definition}{Definition}

\newtheorem{assumption}{Assumption}

        {\hspace*{\fill}$\Box$\par}

\newcommand{\ignore}[1]{}
\renewcommand{\cal}[1]{\mathcal #1}

\newcommand{\R}{\mathbb{R}}

\newcommand{\poly}{\mathrm{poly}}

\newcommand{\wh}{\widehat}

\newcommand{\cP}{{\cal P}}

\newcommand{\cT}{{\cal T}}

\newcommand{\set}[1]{\left\{ #1 \right\}}
\newcommand{\ind}[1]{\mathbbm{1}_{\{#1\}}}
\newcommand{\ones}{\mathbbm{1}}
\newcommand{\abs}[1]{\left\lvert #1 \right\rvert}
\newcommand{\norm}[1]{\left\lVert #1 \right\rVert}
\newcommand{\infnorm}[1]{\left\lVert #1 \right\rVert_{\infty}}
\newcommand{\twonorm}[1]{\left\lVert #1 \right\rVert_{2}}
\newcommand{\frob}[1]{\left\lVert #1 \right\rVert_{F}}
\newcommand{\ceil}[1]{\lceiling #1 \rceiling}

\newcommand{\ip}[2]{{\left\langle #1, #2 \right\rangle}}
\newcommand{\defas}{:=}
\newcommand{\balpha}{\pmb{\alpha}}

\renewcommand{\Pr}{\mathbf{Pr}}
\newcommand{\prob}[1]{\Pr\left[#1\right]}
\newcommand{\expec}[1]{\mathbb{E}\left[#1\right]}

\newcommand{\Sec}[1]{\hyperref[sec:#1]{\S\ref*{sec:#1}}} 
\newcommand{\Eqn}[1]{\hyperref[eq:#1]{(\ref*{eq:#1})}} 
\newcommand{\Fig}[1]{\hyperref[fig:#1]{Fig.\,\ref*{fig:#1}}} 
\newcommand{\Tab}[1]{\hyperref[tab:#1]{Tab.\,\ref*{tab:#1}}} 
\newcommand{\Thm}[1]{\hyperref[thm:#1]{Theorem\,\ref*{thm:#1}}} 
\newcommand{\Lem}[1]{\hyperref[lem:#1]{Lemma\,\ref*{lem:#1}}} 
\newcommand{\Prop}[1]{\hyperref[prop:#1]{Prop.~\ref*{prop:#1}}} 
\newcommand{\Cor}[1]{\hyperref[cor:#1]{Corollary~\ref*{cor:#1}}} 
\newcommand{\Def}[1]{\hyperref[def:#1]{Definition~\ref*{def:#1}}} 
\newcommand{\Alg}[1]{\hyperref[alg:#1]{Alg.~\ref*{alg:#1}}} 
\newcommand{\Ex}[1]{\hyperref[ex:#1]{Ex.~\ref*{ex:#1}}} 
\newcommand{\Clm}[1]{\hyperref[clm:#1]{Claim~\ref*{clm:#1}}} 
\colorlet{shadecolor}{blue!20}

\newcommand{\wH}{\widehat{\Hmat}}
\newcommand{\wX}{\widehat{X}}

\title{Fast Exact Matrix Completion with Finite Samples}

\author{ Prateek Jain\thanks{Microsoft Research, India. Email: prajain@microsoft.com} \and 
Praneeth Netrapalli\thanks{Microsoft Research, Cambridge MA. Email: praneeth@microsoft.com (Part of the work done while a student at UT Austin and interning at MSR India.)}
}

%

\newcommand{\E}{\mathbb{E}}
\newcommand{\Em}{\mathbf{E}}
\newcommand{\Fm}{\mathbf{F}}

\newcommand{\X}{\bm{X}}
\newcommand{\Xplus}{\bm{X_+}}

\newcommand{\Pk}[1][k]{P_{#1}}
\newcommand{\Lo}{\bm{\Lambda}}
\newcommand{\So}{\mathbf{\Sigma}}
\newcommand{\Sh}{\mathbf{\widehat{\Sigma}}}

\newcommand{\Us}{\U^*}
\newcommand{\Vs}{\V^*}
\newcommand{\Ustr}{\trans{\Us}}
\newcommand{\Vstr}{\trans{\Vs}}
\newcommand{\Usperp}{\U^*_{\perp}}

\newcommand{\Usperptr}{\trans{\Usperp}}

\newcommand{\U}{\bm{U}}
\newcommand{\V}{\bm{V}}
\newcommand{\Utr}{\trans{\U}}
\newcommand{\Vtr}{\trans{\V}}
\newcommand{\Util}{\widetilde{\U}}
\newcommand{\Utiltr}{\trans{\Util}}
\newcommand{\Uperp}{{\U}_{\perp}}
\newcommand{\Uperptr}{\trans{\Uperp}}
\newcommand{\Lhat}{\bm{\widehat{\Lambda}}}
\newcommand{\lami}[1][i]{\lambda_{#1}}
\newcommand{\lhati}[1][i]{\widehat{\lambda}_{#1}}
\newcommand{\ltili}[1][i]{\widetilde{\lambda}_{#1}}
\newcommand{\M}{\mathbf{M}}

\newcommand{\whM}{\mathbf{\widehat{M}}}
\newcommand{\Sob}{\underline{\mathbf{\Sigma}}}

\newcommand{\Mb}{\underline{\M}}
\newcommand{\PkM}{\Pk\left(\M\right)}
\newcommand{\cPT}[1]{\cP_{\cT}\left({#1}\right)}
\newcommand{\cPTp}[1]{\cP_{\cT^{\perp}}\left({#1}\right)}

\newcommand{\Nmat}{\bm{N}}
\newcommand{\A}{\bm{A}}
\newcommand{\ei}{\bm{e}_i}
\newcommand{\ej}{\bm{e}_j}
\newcommand{\er}{\bm{e}_r}
\newcommand{\W}{\bm{W}}

\newcommand{\mB}{\bm{B}}
\newcommand{\C}{\bm{C}}
\newcommand{\G}{\bm{G}}
\newcommand{\mE}{\bm{E}}
\newcommand{\Hmat}{\mathbf{H}}
\newcommand{\eye}{\mathbf{I}}
\newcommand{\linf}{\ell_{\infty}}
\newcommand{\ltwo}{\ell_{2}}

\newcommand{\Mto}[1][t]{\X_{#1}}

\newcommand{\Mt}[1][t]{\X_{k,#1}}

\newcommand{\Xkpt}[1][t]{\X_{k+1,#1}}

\newcommand{\sk}[1][k]{\sigma_{#1}}
\newcommand{\e}{\bm{e}}
\newcommand{\PoA}{P_{\Omega}\left(\A\right)}
\newcommand{\Uo}{\mathbf{U}^*}
\newcommand{\Uotr}{\left(\mathbf{U}^*\right)^{\top}}
\newcommand{\so}{\sigma}
\newcommand{\uo}{\bm{u}^*}
\newcommand{\uoi}[1][i]{\uo_{#1}}
\newcommand{\uoitr}[1][i]{\trans{\uo_{#1}}}
\newcommand{\ui}[1][i]{\u_{#1}}
\newcommand{\uitr}[1][i]{\trans{\u_{#1}}}

\newcommand{\trans}[1]{{#1}^{\top}}
\newcommand{\inv}[1]{{#1}^{-1}}

\renewcommand{\ceil}[1]{\lceil{#1}\rceil}
\newcommand{\gkt}[1][t]{\gamma_{k,#1}}
\newcommand{\order}[1]{O\left({#1}\right)}

\newcommand{\muzero}{\mu}

\renewcommand{\u}{\bm{u}}
\newcommand{\x}{\bm{x}}
\newcommand{\Om}[1]{P_{\Omega}\left(#1\right)}
\renewcommand{\poly}[1]{\textrm{poly}\left(#1\right)}
\newcommand{\svp}{SVP}
\newcommand{\stsvp}{St-SVP}
\newcommand{\pgd}{PGD}
\newcommand{\gd}{GD}
\newcommand{\hT}{\widehat{T}}

\newcommand{\red}[1]{{\color{black} {#1}}}
\date{}

\begin{document}

\maketitle

\begin{abstract}
Matrix completion is the problem of recovering a low rank matrix by observing a small fraction of its entries. A series of recent works \cite{Keshavan2012,JainNS2013,Hardt2013} have proposed fast non-convex optimization based iterative algorithms to solve this problem. However, the sample complexity in all these results is sub-optimal in its dependence on the rank, condition number and the desired accuracy.

In this paper, we present a fast iterative algorithm that solves the matrix completion problem by observing $\order{nr^5 \log^3 n}$ entries, which is independent of the condition number and the desired accuracy. The run time of our algorithm is $\order{nr^7\log^3 n\log 1/\epsilon}$ which is near linear in the dimension of the matrix. To the best of our knowledge, this is the first near linear time algorithm for exact matrix completion with finite sample complexity (i.e. independent of $\epsilon$). 
Our algorithm is based on a well known projected gradient descent method, where the projection is onto the (non-convex) set of low rank matrices. There are two key ideas in our result: 1) our argument is based on a $\ell_\infty$ norm potential function (as opposed to the spectral norm) and provides a novel way to obtain perturbation bounds for it. 2) we prove and use a natural extension of the Davis-Kahan theorem to obtain perturbation bounds on the best low rank approximation of matrices with good eigen gap. Both of these ideas may be of independent interest.
\end{abstract}
\newpage
\section{Introduction}
In this paper, we study the problem of low-rank matrix completion (LRMC) where the goal is to recover a low-rank matrix
by observing a tiny fraction of its entries. That is, given ${\cal M}=\{M_{ij}, (i,j)\in \Omega\}$, where $\M\in \R^{n_1\times n_2}$ is an unknown rank-$r$ matrix and $\Omega \subseteq [n_1] \times [n_2]$ is the set of observed indices, the goal is to recover $\M$. An optimization version of the problem can be posed as follows:
\begin{equation}
  \label{eq:prob}
(LRMC):\ \ \ \ \ \ \qquad\min_{\X} \frob{\Om{\X-\M}}^2,\ \ s.t.\ \ rank(\X)\leq r,
\end{equation}
where $\Om{\A}$ is defined as:
\begin{equation}
  \label{eq:pom}
\Om{\A}_{ij}=
  \begin{cases}
    A_{ij}, & \text{ if } (i,j)\in \Omega,\\
0, & \text{ otherwise}.
  \end{cases}
\end{equation}
LRMC is by now a well studied problem with applications in several machine learning tasks such as collaborative filtering \cite{BellK2007}, link analysis \cite{GleichL11}, distance embedding \cite{CandesR2007} etc. Motivated by widespread applications, several practical algorithms have been proposed to solve the problem (heuristically) \cite{RechtR2013, HsiehCD12}. 

On the theoretical front, the non-convex rank constraint implies NP-hardness in general \cite{HardtMRW14}. However, under certain (by now) standard assumptions, a few algorithms have been shown to solve the problem efficiently. These approaches can be categorized into the following two broad groups: 

a) The first approach relaxes the rank constraint in \eqref{eq:prob} to a trace norm constraint (sum of singular values of $\X$) and then solves the resulting convex optimization problem \cite{CandesR2007}. \cite{CandesT2009,Recht2009} showed that this approach has a near optimal sample complexity (i.e. the number of observed entries of $\M$) of $\abs{\Omega} = \order{r n \log^2 n}$, where we abbreviate $n = n_1+n_2$. However, current iterative algorithms used to solve the trace-norm constrained optimization problem require $\order{n^2}$ memory and  $\order{n^3}$ time per iteration, which is prohibitive for large-scale applications.

b) The second approach is based on an empirically popular iterative technique called
Alternating Minimization (AltMin) that factorizes $\X=\U \Vtr$ where $\U, \V$ have $r$ columns, and the algorithm alternately optimizes over $\U$ and $\V$ holding the other fixed.  Recently, \cite{Keshavan2012,JainNS2013,Hardt2013,HardtW14} showed convergence of variants of this algorithm. The best known sample complexity results for AltMin are the incomparable bounds $|\Omega|=\order{ r \kappa^8 n {\log \frac{n}{\epsilon}}}$ and $|\Omega|=\order{\; \poly{r} \left(\log \kappa\right) {n \log \frac{n}{\epsilon}}}$ due to \cite{Keshavan2012} and \cite{HardtW14} respectively. Here, $\kappa=\sigma_1(\M)/\sigma_r(\M)$ is the condition number of $\M$ and $\epsilon$ is the desired accuracy. The computational cost of these methods is $\order{|\Omega|r+nr^3}$ per iteration, making these methods very fast as long as the condition number $\kappa$ is not too large.

Of the above two approaches AltMin is known to be the most practical and runs in near linear time. However, its sample complexity as well as computational complexity depend on the condition number of $\M$ which can be arbitrarily large. Moreover, for ``exact'' recovery of $\M$, i.e., with error $\epsilon=0$, the method requires infinitely many samples (or rather to observe the entire matrix). The dependence of sample complexity on the desired accuracy arises due to the use of independent samples in each iteration, which in turn is necessitated by the fact that using the same samples in each iteration leads to complex dependencies among iterates which are hard to analyze. Nevertheless, practitioners have been using AltMin with same samples in each iteration successfully in a wide range of applications.

{\bf Our results}:
In this paper, we address this issue by proposing a new algorithm called Stagewise-SVP (\textbf{\stsvp}) and showing that it solves the matrix completion problem {\em exactly} with a sample complexity $\abs{\Omega}=\order{n r^5 \log^3 n}$, which is independent of both the condition number, and desired accuracy and time complexity per iteration $\order{\abs{\Omega}r^2}$, which is near linear in $n$.

The basic block of our algorithm is a simple projected gradient descent step, first proposed by \cite{JainMD10} in the context of this problem. More precisely, given the $t^{\textrm{th}}$ iterate $\Mto$, \cite{JainMD10} proposed the following update rule, which they call singular value projection (SVP).
\begin{align}
  \label{eq:svp}
  (SVP): \Mto[t+1]=\Pk[r]\left(\Mto+\frac{n_1 n_2}{\abs{\Omega}}\Om{\M - \Mto}\right),
\end{align}
where $\Pk[r]$ is the projection onto the set of rank-$r$ matrices and can be efficiently computed using singular value decomposition (SVD). 
Note that the \svp~step is just a projected gradient descent step where the projection is onto the (non-convex) set of low rank matrices. \cite{JainMD10} showed that despite involving projections onto a non-convex set, \svp~solves the related problem of low-rank matrix sensing, where instead of observing elements of the unknown matrix, we observe dense linear measurements of this matrix. However, their result does not extend to the matrix completion problem and the correctness of \svp~for matrix completion was left as an open question. 

Our preliminary result  resolves this question by showing the correctness of \svp~for  the matrix completion problem, albeit with a sample complexity that depends on the condition number and desired accuracy. We then develop a stage-wise variant of this algorithm, where in the $k^{\textrm{th}}$ stage, we try to recover $\PkM$, there by getting rid of the dependence on the condition number. Finally, in each stage, we use independent samples for $\log n$ iterations, but use same samples for the remaining  iterations, there by eliminating the dependence of sample complexity on  $\epsilon$.

Our analysis relies on two key novel techniques that enable us to understand \svp~style projected gradient methods even though the projection is onto a {\em non-convex} set. First, we consider $\ell_{\infty}$ norm of the error $\Mto-\M$ as our potential function, instead of its spectral norm that most existing analysis of matrix completion use. In general, bounds on the $\ell_{\infty}$ norm are much harder to obtain as projection via SVD is optimal only in the spectral and Frobenius norms. We obtain $\ell_{\infty}$ norm bounds by writing down explicit eigenvector equations for the low rank projection  and using this to control the $\ell_{\infty}$ norm of the error. Second, in order to analyze the \svp~updates with same samples in each iteration, we prove and use a natural extension of the Davis-Kahan theorem. This extension bounds the perturbation in the best rank-$k$ approximation of a matrix (with large enough eigen-gap) due to any additive perturbation; despite this being a very natural extension of the Davis-Kahan theorem, to the best of our knowledge, it has not been considered before. We believe both of the above techniques should be of independent interest.

{\bf Paper Organization}: We first present the problem setup, our main result and an overview of our techniques in the next section. We then present a ``warm-up'' result for the basic \svp~method in Section~\ref{sec:svp}. We then present our main algorithm (\stsvp) and its analysis in Section~\ref{sec:stagewise}. We conclude the discussion in Section~\ref{sec:conc}. The proofs of all the technical lemmas will follow thereafter in the appendix.

{\bf Notation}: We denote matrices with boldface capital letters ($\M$) and vectors with boldface letters ($\x$). $\mathbf{m}_i$ denotes the $i^{\textrm{th}}$ column and $M_{ij}$ denotes the $(i,j)^{\textrm{th}}$ entry respectively of $\M$. SVD and EVD stand for the singular value decomposition and eigenvalue decomposition respectively. $\Pk(\A)$ denotes the projection of $\A$ onto the set of rank-$k$ matrices. That is, if $\A=\U\So \Vtr$ is the SVD of $\A$, then $\Pk(\A)=\U_k \So_k \Vtr_k$ where $\U_k \in \R^{n_1\times k}$ and $\V_k \in \R^{n_2\times k}$ are the $k$ left and right singular vectors respectively of $\A$ corresponding to the $k$ largest singular values $\sigma_1\geq \sigma_2 \geq \dots \geq \sigma_k$. $\norm{\u}_q$ denotes the $\ell_q$ norm of $\u$. We denote the operator norm of $\M$ by $\twonorm{\M}=\max_{\u, \twonorm{\u}=1}\twonorm{\M\u}$. In general, $\twonorm{\mathbf{\alpha}}$ denotes the $\ell_2$ norm of $\mathbf{\alpha}$ if it is a vector and the operator norm of $\mathbf{\alpha}$ if it is a matrix. $\frob{\M}$ denotes the Frobenius norm of $\M$.


\section{Our Results and Techniques}
In this section, we will first describe the problem set up and then present our results as well as the main techniques we use.
\subsection{Problem Setup}\label{sec:prob}
Let $\M$ be an $n_1 \times n_2$ matrix of rank-$r$. Let $\Omega \subseteq [n_1] \times [n_2]$ be a subset of the indices.
Recall that $\Om{\M}$ (as defined in \eqref{eq:pom}) is the projection of $\M$ on to the indices in $\Omega$. Given $\Omega,\,\Om{\M}$ and $r$, the goal is to recover $\M$. The problem is in general ill posed,  so we make the following standard assumptions on $\M$ and $\Omega$ \cite{CandesR2007}.
\begin{assumption}[\textbf{Incoherence}]\label{assump:incoh}
  $\M\in \R^{n_1\times n_2}$ is a rank-$r$, $\mu$-incoherent matrix i.e., $\max_{i} \|\e_i^T\Us\|_2 \leq \frac{\mu\sqrt{r}}{\sqrt{n_1}}$
and $\max_j\|\e_j^T\Vs\|_2 \leq \frac{\mu\sqrt{r}}{\sqrt{n_2}}$, where $\M=\Us\So\Vstr$ is the singular value decomposition of $\M$. 
\end{assumption}
\begin{assumption}[\textbf{Uniform sampling}]\label{assump:unif}
$\Omega$ is generated by sampling each element of $[n_1]\times[n_2]$ independently with probability $p$.
\end{assumption}
The incoherence assumption ensures that the mass of the matrix is well spread out and a small fraction of uniformly random observations give enough information about the matrix. Both of the above assumptions are standard and are used by most of the existing results, for instance \cite{CandesR2007,CandesT2009,KeshavanOM2009,Recht2009,Keshavan2012}. A few exceptions include the works of \cite{MekaJD09,ChenBSW14,BhojanapalliJ14}. 
\subsection{Main Result}\label{sec:results}

The following theorem is the main result of this paper.
\begin{theorem}\label{thm:main}
Suppose $\M$ and $\Omega$ satisfy Assumptions~\ref{assump:incoh} and~\ref{assump:unif} respectively. Also, let \begin{align*}
\E[|\Omega|]\geq C\alpha \mu^4 r^5 n\log^3 n,
\end{align*}
where $\alpha > 1$, $n \defas n_1 + n_2$ and $C>0$ is a global constant.
Then, the output $\whM$ of Algorithm~\ref{algo:newstagewise} satisfies: $\frob{\whM - \M} \leq \epsilon,$ with probability greater than $1- n^{-10-\log \alpha}$. Moreover, the run time of Algorithm~\ref{algo:newstagewise} is $\order{\abs{\Omega}r^2\log(1/\epsilon)}$.
\end{theorem}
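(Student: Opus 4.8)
The plan is to prove Theorem~\ref{thm:main} in two layers: a single-stage analysis of the SVP update \eqref{eq:svp} carried out entirely in the $\infnorm{\cdot}$ norm, wrapped by a stagewise scheme that removes the dependence on $\kappa$ and on $\epsilon$. Throughout, the potential function is $\infnorm{\Mto - \M}$ paired with an incoherence invariant on $\Mto$, and the goal is to drive $\infnorm{\Mto-\M}$ below $\epsilon/\sqrt{n_1 n_2}$, which gives $\frob{\whM-\M}\le\epsilon$ because the error has rank at most $2r$.

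I would first isolate two deterministic linear-algebra lemmas. (i) An $\infnorm{\cdot}$ perturbation bound for rank-$r$ projection: if $\mB = \M + \mE$ with $\M$ rank $r$, $\mu$-incoherent, and $\twonorm{\mE}$ small relative to $\sigma_r(\M)$, then writing $\Pk[r](\mB) = \wh U\wh U^\top\mB = \mB\wh V\wh V^\top$ and using the singular-vector equations $\mB\wh v_i = \wh\sigma_i\wh u_i$, $\mB^\top\wh u_i = \wh\sigma_i\wh v_i$, one solves for $\wh u_i$ (resp. $\wh v_i$) in terms of $\M\wh v_i$ and $\mE\wh v_i$. This lets us (a) transfer the incoherence of $\M$ to $\wh U$ and $\wh V$ up to an additive $\order{\twonorm{\mE}/\sigma_r}$ term, and (b) bound $\infnorm{\Pk[r](\mB)-\M}$ by a constant-factor contraction of the ``signal part'' of $\infnorm{\mE}$ plus a cross term involving $\twonorm{\mE}/\sigma_r$ times $\infnorm{\M}$ times a $\poly{\mu,r}$ factor. (ii) A Davis--Kahan-type bound for best rank-$k$ approximation: if $\A$ has a gap between its $k$-th and $(k+1)$-th singular values, then $\twonorm{\Pk[k](\A+\mE')-\Pk[k](\A)}$ is controlled by $\twonorm{\mE'}$ over that gap. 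Lemma (ii) is the tool that lets us compare a projection computed from one sample set against an ``idealized'' one.

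Next comes the probabilistic step and the single SVP iteration. Write $\mB_t := \Mto + \frac{n_1 n_2}{|\Omega|}\Om{\M-\Mto} = \M + \mE_t$, where $\mE_t = \big(\eye - \tfrac{n_1 n_2}{|\Omega|}\Pot\big)(\Mto-\M)$ is a mean-zero sampling operator applied to a matrix of rank at most $2r$. Matrix-Bernstein concentration gives, when $\Mto-\M$ lies in the induction region and $|\Omega|$ is large enough, $\twonorm{\mE_t}\le\delta\,\sigma_r(\M)$ with high probability; the essential asymmetry is that the left side is a spectral norm while the input $\Mto-\M$ is controlled only in $\infnorm{\cdot}$, since $\mE_t$ is genuinely large entrywise. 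Feeding this into lemma (i), one SVP step contracts $\infnorm{\Mto-\M}$ by a constant factor and preserves $\order{\mu}$-incoherence; but matching the cross term against the main term forces $\delta = \order{1/\kappa}$, which is precisely the source of the condition-number and $\epsilon$ dependence of the warm-up statement proved in Section~\ref{sec:svp}. To remove it, Section~\ref{sec:stagewise} runs $r$ stages, using rank-$k$ projection in stage $k$ and beginning from an iterate close to $\Pk[k-1](\M)$, so that (by incoherence of the tail) the error sits at the $\sigma_k(\M)$ scale; now the quantity playing the role of $\sigma_1$ in the cross term is $\sigma_k(\M)$, the offending ratio is $\order{1}$ within the stage, and the per-stage sample requirement is condition-number-free. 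The error contracts down to the $\sigma_{k+1}(\M)$ ``noise floor,'' stage $k+1$ takes over, and in the last stage ($k=r$, no tail) the contraction continues to $\infnorm{\Mto-\M}\le\epsilon/\sqrt{n_1 n_2}$. To handle the dependence introduced by reusing samples within a stage, each stage spends its first $\order{\log n}$ iterations on fresh independent sample sets --- so the resulting iterate is independent of the sample set used afterwards and the concentration bounds apply verbatim --- and the remaining iterations on a single fixed set; lemma (ii) then bounds in spectral norm the drift of the fixed-sample trajectory from the idealized one, keeping it inside the $\infnorm{\cdot}$ induction region. A union bound over the $\order{r\log n}$ sample sets, each of expected size $\order{\mu^4 r^4 n\log^2 n}$, and over all iterations yields $\E[|\Omega|] = \order{\mu^4 r^5 n\log^3 n}$ with failure probability $n^{-10-\log\alpha}$. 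For the running time, each iteration is a rank-at-most-$r$ partial SVD of a matrix that is $|\Omega|$-sparse plus rank at most $r$, costing $\order{|\Omega| r}$, and there are $\order{r\log(1/\epsilon)}$ iterations in all, for a total of $\order{|\Omega| r^2\log(1/\epsilon)}$.

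The main obstacle is lemma (i): because SVD is optimal only in the spectral and Frobenius norms and $\mE_t$ is large in $\infnorm{\cdot}$, the whole $\infnorm{\cdot}$ improvement has to be extracted from the spectral smallness of $\mE_t$ through the singular-vector equations, while simultaneously re-establishing the incoherence invariant that the concentration bound itself depends on --- a coupling that the induction must carry jointly. The secondary difficulty is making the extended Davis--Kahan bound (ii) quantitatively strong enough that, across all same-sample iterations of a stage, the accumulated drift never leaves the region where the contraction in (i) holds.
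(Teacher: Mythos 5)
Your plan fails at its central step, the single-iteration $\ell_\infty$ contraction. Your lemma (i) is stated as a \emph{deterministic} perturbation bound: incoherence of $\M$ plus smallness of $\twonorm{\mE}$ is supposed to yield an entrywise bound whose cross term is $\frac{\twonorm{\mE}}{\sigma_r}\cdot\infnorm{\M}\cdot\poly{\mu,r}$, i.e.\ of order $\poly{\mu,r}\,\twonorm{\mE}/n$. That statement is false for adversarial $\mE$: take $\M=\sigma \u\u^\top$ with $\u=\ones/\sqrt{n}$ and $\mE=\delta\sigma(\e_1\u^\top+\u\e_1^\top)$; then $\twonorm{\mE}\approx\delta\sigma$, yet $\infnorm{\Pk[1]\left(\M+\mE\right)-\M}$ is of order $\delta\sigma/\sqrt{n}$ --- a factor $\sqrt{n}$ larger than your cross term. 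This matters quantitatively: in the algorithm $\twonorm{\Em-\tfrac1p P_\Omega(\Em)}\approx\sqrt{n/p}\,\infnorm{\Em}$, so any bound available at the $\twonorm{\cdot}/\sqrt n$ scale gives roughly $\infnorm{\Em}/\sqrt{p}$, which does not contract for $p=o(1)$. Hence routing all the randomness through a single matrix-Bernstein spectral-norm bound and doing the rest by deterministic linear algebra cannot work. The paper's Lemma 1 obtains the crucial $\mu^2r^2/n$ prefactor only by exploiting that $\Hmat=\Em-\tfrac1p P_\Omega(\Em)$ has independent, zero-mean, moment-bounded entries: $\Pk\left(\M+\beta\Hmat\right)$ is expanded as a full Taylor/Neumann series in $\Hmat$, and every term is controlled by the stochastic bound $\infnorm{\Hmat^a\uoi}\le(c\log n)^a\infnorm{\uoi}$ (Lemma 6, a combinatorial moment argument), which has no analogue in terms of $\twonorm{\Hmat}$ alone. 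Relatedly, the paper never needs, and never proves, incoherence of the iterates: the $\ell_\infty$ bound on $\Em$ by itself certifies the moment conditions on $\Hmat$, which is what lets the induction close; carrying a joint incoherence invariant as you propose is the AltMin-style route the paper deliberately avoids.

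The reused-sample phase is also not substantiated. Your idea --- bound, via the Davis--Kahan extension, the spectral-norm ``drift'' of the fixed-sample trajectory from an idealized one --- does not address the real difficulty: after the first reused iteration the iterate depends on that sample set, so concentration bounds do not ``apply verbatim,'' and a trajectory-coupling argument would itself need estimates holding uniformly over data-dependent iterates. The paper instead reuses samples only in the near-rank-$k$ regime $\frob{\M-\PkM}\lesssim\sigma_k/n^3$ (detected by an explicit test on $\sigma_{k+1}$ of the gradient update, Step II of Algorithm 2, missing from your outline), and there proves a Frobenius-norm contraction (Lemma 2) that is valid \emph{uniformly over all} rank-$k$ $\X$ near $\PkM$, because the random quantities involved concern only $P_\Omega$ acting on the fixed incoherent eigenvectors of $\M$; the Davis--Kahan extension (Lemma 3) and a tangent-space decomposition then turn these into the contraction, and a final fresh-sample phase (Step IV) converts the Frobenius bound back into the $\ell_\infty$ invariant for the next stage. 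Without the near-rank-$k$ case split and this uniform-over-$\X$ mechanism, the reused-sample portion --- and with it the $\epsilon$-independence of the sample complexity --- does not go through.
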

Algorithm~\ref{algo:newstagewise} is based on the projected gradient descent update \eqref{eq:svp} and proceeds in $r$ stages where in the $k$-th stage, projections are performed onto the set of rank-$k$ matrices. See Section~\ref{sec:stagewise} for a detailed description and the underlying intuition behing our algorithm.

Table~\ref{tab:comparison} compares our result to that for nuclear norm minimization, which is the only other polynomial time method with finite sample complexity guarantees (i.e. no dependence on the desired accuracy $\epsilon$). Note that \stsvp~runs in time near linear in the ambient dimension of the matrix ($n$), where as nuclear norm minimization runs in time cubic in the ambient dimension. However, the sample complexity of \stsvp~is suboptimal in its dependence on the incoherence parameter $\mu$ and rank $r$. We believe closing this gap between the sample complexity of \stsvp~and that of nuclear norm minimization should be possible and leave it for future work.
\begin{table}[h]
  \begin{center}
    \begin{tabular}{ | c | c | c |}
      \hline
								& Sample complexity & Comp. complexity  \\ \hline
      Nuclear norm minimization \cite{Recht2009}		& $\order{\mu^2 r n \log^2 n}$
					& $\order{n^3 \log \frac{1}{\epsilon}}$ \\ \hline
      \red{\stsvp~(This paper)}	& $\red{ \order{\mu^4 r^5 n \log^3 n}}$
					&  $\red{\order{\mu^4 r^7 n \log^3 n \log(1/\epsilon)}}$ \\ \hline
      \end{tabular}
      \caption{Comparison of our result to that for nuclear norm minimization.}
      \label{tab:comparison}
  \end{center}
\end{table}

\subsection{Overview of Techniques}\label{sec:techniques}
In this section, we briefly present the key ideas and lemmas we use to prove Theorem~\ref{thm:main}. Our proof revolves around analyzing the basic \svp~step \eqref{eq:svp}: $\Mto[t+1]=\Pk\left(\Mto+\frac{1}{p}\Om{\M - \Mto}\right) = \Pk(\M+\wH)$ where $p$ is the sampling probability, $\wH \defas \Mto - \M - \frac{1}{p}\Om{\Mto - \M} = \Em - \frac{1}{p}P_{\Omega}(\Em)$ and $\Em \defas \Mto - \M$ is the error matrix. Hence, $\Mto[t+1]$ is given by a rank-$k$ projection of $\M+\wH$, which is a perturbation of the desired matrix $\M$.

{\bf Bounding the $\ell_\infty$ norm of errors}: As the \svp~update is based on projection onto the set of rank-$k$ matrices, a natural potential function to analyze would be $\twonorm{\Em}$ or $\frob{\Em}$. However, such a potential function requires bounding norms of $\Em - \frac{1}{p}P_{\Omega}(\Em)$ 
which in turn would require us  
 to show that $\Em$ is incoherent.  This is the approach taken by papers on AltMin \cite{Keshavan2012,JainNS2013,Hardt2013}.

In contrast, in this paper, we consider $\infnorm{\Em}$ as the potential function. So the goal is to show that $\infnorm{ \Pk\left(\M+\wH\right) - \M }$ is much smaller than $\infnorm{\Em}$. Unfortunately, standard perturbation results such as the Davis-Kahan theorem provide bounds on spectral, Frobenius or other unitarily invariant norms and do not apply to the $\ell_\infty$ norm.

In order to carry out this argument, we write the singular vectors of $\M+\wH$ as solutions to eigenvector equations and then use these to write $\Mto[t+1]$ explicitly via Taylor series expansion. We use this technique to prove the following more general lemma.

\begin{lemma}\label{lem:error-decay-general}
Suppose $\M\in \R^{n\times n}$ is a symmetric matrix satisfying Assumption~\ref{assump:incoh}. Let $\sk[1] \geq \cdots \geq \sk[r]$ denote its singular values. Let $\Hmat\in \R^{n\times n}$ be a random symmetric matrix such that each $H_{ij}$ is independent with $\expec{H_{ij}}=0$ and $\expec{\abs{H_{ij}}^a} \leq 1/n$ for $2\leq a \leq \log n$. Then, for any $\alpha > 1$ and $|\beta| \leq \frac{{\sk}}{200 \sqrt{\alpha}}$ we have:
\begin{align*}
\infnorm{\M - \Pk\left(\M + \beta \Hmat \right)} \leq \frac{\muzero^2 r^2}{n} \left( \sk[k+1] + 15 |\beta| \sqrt{\alpha} \log n\right),
\end{align*}
with probability greater than $1 - n^{-10-\log \alpha}$.
\end{lemma}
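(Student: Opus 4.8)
The plan is to write the best rank-$k$ approximation $\Pk(\M+\beta\Hmat)$ explicitly in terms of perturbed eigenvectors and then bound the $\ell_\infty$ norm of each coordinate of the resulting error matrix via a Neumann-series / Taylor expansion in $\beta$. First I would diagonalize $\M = \sum_{i=1}^r \sigma_i \uoi\uoitr$ and, writing $\A = \M + \beta\Hmat$, note that $\Pk(\A) = \sum_{i=1}^k \lambda_i(\A)\, \v_i\v_i^\top$, where $(\lambda_i(\A),\v_i)$ are the top $k$ eigenpairs of $\A$. The key analytic step is the standard eigenvector perturbation identity: for each $i\le k$, the perturbed eigenvector satisfies $\v_i = \uoi + \beta (\lambda_i \eye - \M)^\dagger_{\perp i}\,\Hmat\,\v_i + (\text{correction terms in the }\uoi\text{ direction})$, which one solves self-consistently as a fixed point. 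Iterating this identity generates a series $\v_i = \sum_{t\ge 0}\beta^t \w_i^{(t)}$ where each $\w_i^{(t)}$ is an explicit product of $t$ copies of $\Hmat$ interleaved with resolvent-type operators $R = \sum_{j>r}\frac{1}{\lambda_i - \sigma_j}\uoj\uojtr$ (here $\sigma_j=0$ for $j>r$, so $R$ is roughly $-\lambda_i^{-1}$ times the projection onto the orthogonal complement of the top-$r$ space). Since $|\lambda_i - \sigma_i|$ and the relevant eigen-gaps are all $\Theta(\sigma_i)$ when $|\beta|\le \sigma_r/(200\sqrt\alpha)$ (Weyl's inequality controls this), the operator norms of these resolvents are $O(1/\sigma_k)$, and the series converges geometrically with ratio $\lesssim |\beta|\twonorm{\Hmat}/\sigma_k$.

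Next I would substitute these series into $\Pk(\A) = \sum_{i\le k}\lambda_i \v_i\v_i^\top$ and compare term-by-term with $\sum_{i\le k}\sigma_i\uoi\uoitr = \Pk(\M)$, so that $\M - \Pk(\A) = (\M - \Pk(\M)) + (\Pk(\M) - \Pk(\A))$; the first piece has $\ell_\infty$ norm at most $\sigma_{k+1}\cdot\mu^2 r/n$ (actually $\le \sigma_{k+1}\mu^2 r^2/n$ crudely, since its $(a,b)$ entry is $\sum_{k<i\le r}\sigma_i\usi(a)\usi(b)$ and incoherence gives $|\usi(a)|\le \mu\sqrt r/\sqrt n$), matching the first term in the claimed bound. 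For the second, genuinely perturbative piece, each summand is a coordinate of a matrix of the form $\beta^t \sigma_i\, (\text{word in }\Hmat,\,\uoi,\,\uoj,\,R)$; evaluating the $(a,b)$ entry produces a sum over index paths of products of entries $H_{\cdot\cdot}$, entries of eigenvectors $\uoi(\cdot)$ (each $\le \mu\sqrt r/\sqrt n$), and resolvent weights $\le 1/\sigma_k$. The crucial probabilistic ingredient is a bound on $\infnorm{\Hmat^t \x}$-type quantities: using the moment hypothesis $\expec{|H_{ij}|^a}\le 1/n$ for $2\le a\le \log n$, a high-moment (order-$\log n$) Markov bound shows that entries of such words concentrate, yielding $\twonorm{\Hmat}\lesssim \sqrt\alpha\log n$ with probability $\ge 1-n^{-10-\log\alpha}$ and, more delicately, that the $\ell_\infty\to\ell_\infty$ behavior of $\Hmat$ composed with incoherent vectors loses only polylog and $\mu\sqrt r$ factors per application. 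Summing the geometric series in $t$ and collecting the $\mu$, $r$, $\sqrt\alpha\log n$, and $1/\sigma_k$ factors then gives the $15|\beta|\sqrt\alpha\log n\cdot\mu^2 r^2/n$ term.

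The main obstacle is the $\ell_\infty$ control of the perturbative terms: ordinary Davis–Kahan would bound $\twonorm{\Pk(\M)-\Pk(\A)}$ or $\frob{\cdot}$, but here each resolvent application $R\Hmat(\cdot)$ must be tracked entrywise, and a naive bound $\infnorm{R\Hmat\v}\le \twonorm{R\Hmat}\cdot(\text{something})$ is far too lossy. The right move is to keep the whole word expanded as a sum over paths and observe that an internal application of $\Hmat$ against a vector that is $\mu$-incoherent (i.e.\ has $\ell_\infty$ norm $O(\mu\sqrt r/\sqrt n)$ and $\ell_2$ norm $O(1)$) again has $\ell_\infty$ norm only $\tilde O(\mu\sqrt r/\sqrt n)$ with high probability — essentially because $(\Hmat\v)_a = \sum_b H_{ab}\v(b)$ is a sum of independent mean-zero terms whose $\log n$-th moment is controlled by the hypothesis on $\expec{|H_{ab}|^a}$ and the incoherence of $\v$. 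Propagating incoherence through the series in this way, rather than through spectral norms, is exactly the ``novel way to obtain perturbation bounds for the $\ell_\infty$ norm'' advertised in the introduction, and getting the bookkeeping of path-sums, moment bounds, and the union bound over the (polynomially many, after truncating the series at length $O(\log n)$) terms to close cleanly is where the real work lies.
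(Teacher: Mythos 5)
Your overall template (Neumann/Taylor expansion of the perturbed top-$k$ eigenpairs, entrywise path-sum bounds that exploit the incoherence of the unperturbed singular vectors, high-moment Markov bounds of order $\log n$, truncation of the series at length $O(\log n)$ with spectral-norm bounds for the tail) is the same as the paper's, but the specific expansion you choose has a genuine gap. You expand each perturbed eigenvector $\v_i$ around the unperturbed $\uoi$ using resolvents of $\M$, and assert that ``the relevant eigen-gaps are all $\Theta(\sigma_i)$.'' The lemma assumes nothing about gaps \emph{within} the spectrum: the top singular values may be repeated or clustered (e.g.\ $\sigma_i=\sigma_j$ for $i\neq j\le k$, or $\sigma_{k+1}$ arbitrarily close to $\sigma_k$), in which case the denominators $\lambda_i-\sigma_j$ in your Rayleigh--Schr\"odinger-type series are not $\Theta(\sigma_k)$, individual perturbed eigenvectors are not stable, and the series does not converge. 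The same degeneracy breaks your error decomposition $\M-\Pk(\M+\beta\Hmat)=(\M-\Pk(\M))+(\Pk(\M)-\Pk(\M+\beta\Hmat))$: when $\sigma_{k+1}\approx\sigma_k$ the retained subspace can rotate by $\Theta(1)$ into the direction $\uoi[k+1]$, so the second piece is of size $\sigma_{k+1}\mu^2 r/n$ rather than $O(|\beta|\,\mathrm{polylog})$ as your accounting requires. The paper avoids both problems by never comparing to the unperturbed eigenvectors: it writes the exact equation $(\M+\beta\Hmat)\u_i=\lambda_i\u_i$ for the \emph{perturbed} pairs, inverts $\eye-\frac{\beta}{\lambda_i}\Hmat$ (which only needs $|\lambda_i|\gtrsim\sigma_k\gg|\beta|\twonorm{\Hmat}$, guaranteed by Weyl), obtains the identity $\Pk(\M+\beta\Hmat)=\sum_{a,b\ge0}\beta^{a+b}\Hmat^a\,\M\U\Lo^{-(a+b+1)}\Utr\M\,\Hmat^b$, and bounds the zeroth-order term through a deterministic perturbation lemma giving $\twonorm{\M-\M\U\Lo^{-1}\Utr\M}\le\sigma_{k+1}+5|\beta|\twonorm{\Hmat}$ \emph{without any gap assumption}; the higher terms are handled exactly as you envision, via the moment-method bound $|\langle\e_r,\Hmat^a\u\rangle|\le(c\log n)^a\infnorm{\u}$ applied to the fixed incoherent vectors $\uoi$ (which also sidesteps the dependence issue you flag when applying $\Hmat$ ``one factor at a time'').

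A second, quantitative gap: you rely on $\twonorm{\Hmat}\lesssim\sqrt{\alpha}\log n$. Under the hypothesis $|\beta|\le\sigma_k/(200\sqrt{\alpha})$ this only yields $|\beta|\twonorm{\Hmat}\lesssim\sigma_k\log n/200$, which is \emph{not} small compared to $\sigma_k$ for large $n$, so neither the Weyl step keeping $|\lambda_i|\ge\frac45\sigma_k$ nor the geometric convergence of your series (ratio $|\beta|\twonorm{\Hmat}/\sigma_k$) follows. The paper needs, and proves via matrix Bernstein using the variance condition $\expec{|H_{ij}|^2}\le 1/n$, the sharper bound $\twonorm{\Hmat}\le 3\sqrt{\alpha}$ with no logarithmic factor; the $\log n$ factors enter only through the entrywise bounds on $\Hmat^a\uoi$, not through the spectral norm. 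Without this distinction your constants, and in fact the convergence of the expansion itself, do not close.
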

{\bf Proceeding in stages}: If we applied Lemma~\ref{lem:error-decay-general} with $k=r$, we would require $\abs{\beta}$ to be much smaller than $\sigma_r$. Now, $\beta$  can be thought of as $\beta\approx \sqrt{\frac{n}{p}} \infnorm{\Em}$. If we start with $\Mto[0]= 0$, we have $\Em=-\M$, and so $\infnorm{\Em}=\|\M\|_\infty\leq \frac{\sigma_1\mu^2 r}{n}$. To make $\beta\leq \sigma_r$, we would need the sampling probability $p$ to be quadratic in the condition number $\kappa=\sigma_1/\sigma_r$ . In order to overcome this issue, we perform \svp~in $r$ stages with the $k^{th}$ stage performing projections on to the set of rank-$k$ matrices while maintaining the invariant that at the end of $(k-1)^{\textrm{th}}$ stage, $\infnorm{\Em}=O(\sigma_k/n)$. This lets us choose a $p$ independent of $\kappa$ while still ensuring $\beta \approx \sqrt{\frac{n}{p}} \infnorm{\Em} \leq \sk$. Lemma~\ref{lem:error-decay-general} tells us that at the end of the $k^{\textrm{th}}$ stage, the error $\infnorm{\Em}$ is $\order{\frac{\sk[k+1]}{n}}$, there by establishing the invariant for the $(k+1)^{\textrm{th}}$ stage.
%

{\bf Using same samples}: In order to reduce the error from $\order{\frac{\sk}{n}}$ to $\order{\frac{\sk[k+1]}{n}}$, the $k^{\textrm{th}}$ stage would require $\order{\log \frac{\sk}{\sk[k+1]}}$ iterations. Since Lemma~\ref{lem:error-decay-general} requires the elements of $\Hmat$ to be independent, in order to apply it, we need to use fresh samples in each iteration. This means that the sample complexity increases with $\frac{\sk}{\sk[k+1]}$, or the desired accuracy $\epsilon$ if $\epsilon < \sk[k+1]$. This problem is faced by all the existing analysis for iterative algorithms for matrix completion \cite{Keshavan2012,JainNS2013,Hardt2013,HardtW14}. We tackle this issue by observing that when $\M$ is ill conditioned and $\frob{\Em}$ is very small, we can show a decay in $\frob{\Em}$ using the same samples for \svp~iterations:\\
\begin{lemma}\label{lem:error-decay-samesamples}
Let $\M$ and $\Omega$ be as in Theorem~\ref{thm:main} with $\M$ being a symmetric matrix. Further, let $\M$ be ill conditioned in the sense that $\frob{\M - P_k(\M)} < \frac{\sk}{n^3}$, where $\sk[1]\geq \cdots \geq \sk[r]$ are the singular values of $\M$. Then, the following holds for all rank-$k$ $\X$ s.t. $\frob{\X - \Pk(\M)} < \frac{\sk}{n^3}$ (w.p. $\geq 1-n^{-10-\alpha}$):
\begin{align*}
\frob{\Xplus - P_k(\M)} \leq \frac{1}{10} \frob{\X - \Pk(\M)} + \frac{1}{{p}} \frob{\M - \Pk(\M)},
\end{align*}
where $\Xplus \defas \Pk\left(\X - \frac{1}{p} \Om{ \X-\M }\right)$ denotes the rank-$k$ \svp~update of $\X$ and $p=\E[|\Omega|]/n^2=\frac{C\alpha\mu^4r^5\log^3 n}{n}$ is the sampling probability.
\end{lemma}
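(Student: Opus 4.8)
The plan is to exploit that, in this regime, both the current error $\mE \defas \X - \Pk(\M)$ and the tail $\Nmat \defas \M - \Pk(\M)$ are minuscule ($\frob{\mE},\frob{\Nmat} < \sk/n^3$) while $\Mb \defas \Pk(\M)$ is \emph{exactly} rank $k$ with an enormous spectral gap ($\sigma_k(\Mb)=\sk$, $\sigma_{k+1}(\Mb)=0$), so the one-step SVP map is governed by its linearization at $\Mb$. The decisive point — which is exactly what lets a single $\Omega$ serve every iteration of a stage — is that the only randomness that enters is a \emph{restricted isometry property of $\tfrac1p\Om{\cdot}$ on the fixed tangent space $\cT$ of the rank-$k$ manifold at $\Mb$}: this event refers only to $\Mb$ and $\Nmat$, never to the iterate $\X$, so no union bound over the continuum of iterates is needed. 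Setting $\mB \defas \Xplus - \Mb$ and rewriting the update,
\begin{align*}
\Xplus \;=\; \Pk\!\Big(\X - \tfrac1p\Om{\X-\M}\Big) \;=\; \Pk(\Mb + \W), \qquad \W \defas \big(\mE - \tfrac1p\Om{\mE}\big) - \big(\Nmat - \tfrac1p\Om{\Nmat}\big),
\end{align*}
one has $\frob{\W}\le(1+\tfrac1p)(\frob{\mE}+\frob{\Nmat})<\tfrac4p\cdot\tfrac{\sk}{n^3}$, so $\twonorm{\W}\ll\sk$ and $\frob{\mB}\le 2\frob{\W}$ (the latter by optimality of $\Pk$ in Frobenius norm against the rank-$k$ competitor $\Mb$).

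\emph{Deterministic geometry.} (a) $\Mb$ inherits the incoherence of $\M$ (its singular vectors are the top-$k$ singular vectors of $\M$) and has rank exactly $k$ with the gap above — so $\twonorm{\W}\ll\sk$ forces $\Xplus$ to have rank $k$. (b) A curvature estimate for the rank-$k$ manifold: for any rank-$k$ $Z$ with $\frob{Z-\Mb}<\sk/2$, $\frob{\cPTp{Z-\Mb}}\le\tfrac{2k}{\sk}\frob{Z-\Mb}^2$ — proved by writing $\cPTp{Z-\Mb}=(I-\cP_{\cU})Z(I-\cP_{\cV})$ with $\cU,\cV$ the singular subspaces of $\Mb$, expanding $Z$ in its own SVD, and using $\sigma_i(Z)\,\|(I-\cP_{\cU})z_i\|\le\frob{Z-\Mb}$. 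Applying this to $Z=\X$ and $Z=\Xplus$ shows $\frob{\cPTp{\mE}}$ and $\frob{\cPTp{\mB}}$ are smaller than $\frob{\mE}$, resp. $\frob{\mB}$, by a factor $o(1/n)$; i.e. $\mE$ and $\mB$ are tangent vectors up to negligible relative error. (c) $\Nmat=\cPTp{\Nmat}$ exactly, since the column and row spaces of $\M-\Pk(\M)$ are orthogonal to those of $\Mb$.

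\emph{Probabilistic input and contraction.} With $p\ge\tfrac{4C\alpha\mu^4r^5\log^3 n}{n}$, far above the $\mu^2 r\log n/n$ threshold for matrix-completion concentration, the following hold with probability $\ge1-n^{-10-\alpha}$: the restricted isometry $\big\|\cP_{\cT}\tfrac1p\Om{\cdot}\cP_{\cT}-\cP_{\cT}\big\|_{\mathrm{op}}\le\tfrac1{100}$ on the fixed subspace $\cT$; and — since $\Nmat\in\cT^{\perp}$ is a fixed low-rank incoherent matrix — the two-fixed-subspace variant $\frob{\cP_{\cT}\tfrac1p\Om{\Nmat}}=\frob{\cP_{\cT}\big(\tfrac1p\Om{\cdot}-I\big)\Nmat}\le\tfrac1{100}\frob{\Nmat}$. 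Now optimality of $\Pk$ gives $\frob{\mB}^2\le 2\ip{\mB}{\W}$; split $\W$ into its two bracketed pieces. For $\ip{\mB}{\mE-\tfrac1p\Om{\mE}}$, decompose $\mB=\cPT{\mB}+\cPTp{\mB}$ and $\mE=\cPT{\mE}+\cPTp{\mE}$: the tangent–tangent term is $\ip{\cPT{\mB}}{\big(\cP_{\cT}-\cP_{\cT}\tfrac1p\Om{\cdot}\cP_{\cT}\big)\cPT{\mE}}\le\tfrac1{100}\frob{\mB}\frob{\mE}$, and every remaining term carries a factor $\frob{\cPTp{\mE}}$ or $\frob{\cPTp{\mB}}$, hence by (b) a factor $o(1/n)$ against a $1+\tfrac1p$, which is $o(1)$ — so this piece is $\le(\tfrac1{100}+o(1))\frob{\mB}\frob{\mE}$. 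For $\ip{\mB}{\Nmat-\tfrac1p\Om{\Nmat}}$, use (c): $\ip{\mB}{\Nmat}=\ip{\cPTp{\mB}}{\Nmat}$ is $o(1)\frob{\mB}\frob{\Nmat}$ by (b), while $\tfrac1p\ip{\mB}{\Om{\Nmat}}=\ip{\cPT{\mB}}{\cP_{\cT}\tfrac1p\Om{\Nmat}}+\tfrac1p\ip{\cPTp{\mB}}{\Om{\Nmat}}$ is $\le(\tfrac1{100}+o(1))\frob{\mB}\frob{\Nmat}$ by the second concentration bound and (b). Hence $\ip{\mB}{\W}\le(\tfrac1{100}+o(1))\frob{\mB}(\frob{\mE}+\frob{\Nmat})$, and dividing $\frob{\mB}^2\le 2\ip{\mB}{\W}$ by $\frob{\mB}$ (the claim being trivial if $\mB=0$),
\begin{align*}
\frob{\Xplus-\Pk(\M)} \;\le\; \big(\tfrac1{50}+o(1)\big)\big(\frob{\X-\Pk(\M)}+\frob{\M-\Pk(\M)}\big) \;\le\; \tfrac1{10}\frob{\X-\Pk(\M)}+\tfrac1p\frob{\M-\Pk(\M)},
\end{align*}
the last step for $n$ large enough, using $\tfrac1p\ge 1$.

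\emph{Main obstacle.} The conceptual crux is the first paragraph's reduction: running the entire one-step bound off a restricted-isometry event for the \emph{fixed} tangent space at $\Pk(\M)$, together with the deterministic curvature and orthogonality facts of the second paragraph, so that the union bound over the continuum of possible iterates $\X$ — the obstruction that in general forces fresh samples in each iteration — disappears. On the technical side, the labor is in the bookkeeping of the second-order terms: verifying that every non-tangent contribution is crushed by playing $\frob{\X-\Pk(\M)},\frob{\M-\Pk(\M)}<\sk/n^3$ and the curvature bound against $p\gg 1/n$, so that all such terms are genuinely $o(1)$ relative to the main contracting term.
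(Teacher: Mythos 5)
Your proposal is correct, and it shares the paper's two structural pillars --- the observation that all randomness can be confined to the \emph{fixed} tangent space at $\Pk(\M)$ and the fixed tail matrix (hence sample reuse over all iterates $\X$), and the curvature estimate, which is precisely the paper's Lemma~\ref{lem:perturbation_perpspace} --- but the deterministic engine is genuinely different. The paper never forms an inner-product/optimality inequality: it notes that $\Pk(\M)=P_k\bigl(\Pk(\M)+\cPTp{\Hmat-\tfrac1p\Om{\M-\Pk(\M)}}\bigr)$ and invokes its new Davis--Kahan-type Lemma~\ref{lem:daviskahan-approx} to reduce the whole bound to $\frob{\cPT{\Hmat-\tfrac1p\Om{\M-\Pk(\M)}}}$, which Claim~\ref{cl:lem2_tmp2} then controls term by term via the bespoke concentration Lemmas~\ref{lem:bound_tgt1}--\ref{lem:bound_tgt3} (a coordinate-wise surrogate for your tangent-space RIP); the tail is handled by the trivial bound $\frob{\tfrac1p\Om{\M-\Pk(\M)}}\le\tfrac1p\frob{\M-\Pk(\M)}$, which is the source of the $\tfrac1p$ in the statement. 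Your route --- Eckart--Young optimality against the rank-$k$ competitor $\Pk(\M)$ giving $\frob{\mB}^2\le2\ip{\mB}{\cdot}$, curvature applied to both $\X$ and $\Xplus$, and the standard operator RIP $\|\cP_{\cT}-\cP_{\cT}\tfrac1p P_{\Omega}\cP_{\cT}\|\le\tfrac1{100}$ --- dispenses with the Davis--Kahan extension altogether and is more elementary, at the cost of importing the RIP (and your second concentration claim) from the literature rather than proving them; note the trivial tail bound would have sufficed given the $\tfrac1p$ slack, so that second claim is dispensable. One bookkeeping slip: with your $\W$ one has $\Xplus=P_k(\M+\W)=P_k\bigl(\Pk(\M)+\W+\Nmat\bigr)$, not $P_k(\Pk(\M)+\W)$, so the optimality inequality should read $\frob{\mB}^2\le2\ip{\mB}{\W+\Nmat}$; this is harmless since you bound $\ip{\mB}{\Nmat}$ and $\tfrac1p\ip{\mB}{\Om{\Nmat}}$ separately in absolute value.
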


The following lemma plays a crucial role in proving Lemma~\ref{lem:error-decay-samesamples}. It is a natural extension of the Davis-Kahan theorem for singular vector subspace perturbation.
\begin{lemma}\label{lem:daviskahan-approx}
Suppose $\A$ is a matrix such that $\sigma_{k+1}(\A) \leq \frac{1}{4}\sigma_k(\A)$. Then, for any matrix $\Em$ such that $\frob{\Em} < \frac{1}{4}\sigma_k(\A)$, we have:
\begin{align*}
\frob{P_k\left(\A+\Em\right) - P_k\left(\A\right)} \leq c\left( \sqrt{k}\twonorm{\Em} + \frob{\Em} \right),
\end{align*}
for some absolute constant $c$.
\end{lemma}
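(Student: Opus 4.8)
Throughout I would work with the singular value decompositions $\A=\sum_i\sigma_i u_iv_i^\top$ and $\mB:=\A+\Em=\sum_i\tilde\sigma_i\tilde u_i\tilde v_i^\top$ (completing the $u_i,v_i,\tilde u_i,\tilde v_i$ to orthonormal bases, with zero singular values past the rank), write $P_U=\sum_{i\le k}u_iu_i^\top$, $P_V=\sum_{i\le k}v_iv_i^\top$, and $P_{\tilde U},P_{\tilde V}$ for the corresponding top-$k$ projectors of $\mB$, and set $Q_U=I-P_U$, $Q_{\tilde U}=I-P_{\tilde U}$. We may assume $\sigma_k(\A)>0$, else the hypothesis is empty. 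Since $\twonorm{\Em}\le\frob{\Em}<\tfrac14\sigma_k(\A)$, Weyl's inequality gives $\sigma_k(\mB)>\tfrac34\sigma_k(\A)$ and $\sigma_{k+1}(\mB)<\tfrac12\sigma_k(\A)$, so in both matrices the top-$k$ singular values strictly dominate the rest (hence $P_k(\A),P_k(\mB)$ are well defined, $P_k(\A)=\A P_V$, and $P_k(\mB)=P_{\tilde U}\mB$), and moreover $\sigma_i(\A)-\tilde\sigma_{i'}>\tfrac12\sigma_k(\A)$ when $i\le k<i'$ and $\tilde\sigma_{i'}-\sigma_i(\A)>\tfrac12\sigma_k(\A)$ when $i'\le k<i$.

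The first step is the algebraic identity
\[
P_k(\mB)-P_k(\A)=P_{\tilde U}\Em+P_{\tilde U}R_A-Q_{\tilde U}P_k(\A),\qquad R_A:=\A-P_k(\A),
\]
which follows by substituting $\mB=\A+\Em$ into $P_k(\mB)-P_k(\A)=P_{\tilde U}\mB-\A P_V$ and inserting $P_{\tilde U}\A P_V$. I would then bound the three pieces separately. The first is immediate: $\frob{P_{\tilde U}\Em}\le\frob{\Em}$. For the second, $R_A=Q_UR_A$, so $\frob{P_{\tilde U}R_A}\le\frob{P_{\tilde U}Q_U}\,\twonorm{R_A}$; since $\twonorm{R_A}=\sigma_{k+1}(\A)\le\tfrac14\sigma_k(\A)$ it suffices to show $\frob{P_{\tilde U}Q_U}\le c'\,\frob{\Em}/\sigma_k(\A)$, which is a Davis--Kahan/Wedin $\sin\Theta$ bound with gap $>\tfrac12\sigma_k(\A)$ (this can be cited, or proved by the same coordinatewise computation used below), yielding $\frob{P_{\tilde U}R_A}=O(\frob{\Em})$.

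The crux is the third term, and this is where the obstacle sits: $P_k(\A)$ has operator norm $\sigma_1(\A)$ while $P_{\tilde U}$ differs from $P_U$ by an angle of order $\twonorm{\Em}/\sigma_k(\A)$, so bounding $\frob{Q_{\tilde U}P_k(\A)}$ by $\sqrt{\mathrm{rank}}\cdot\twonorm{\cdot}$ reintroduces the condition number $\sigma_1(\A)/\sigma_k(\A)$, which must not appear. The fix is to exploit that high singular vectors are far more stable and to add contributions coordinatewise. Writing $Q_{\tilde U}P_k(\A)=\sum_{i\le k}\sigma_i(Q_{\tilde U}u_i)v_i^\top$ and using orthonormality of $\{v_i\}_{i\le k}$ gives $\frob{Q_{\tilde U}P_k(\A)}^2=\sum_{i\le k}\sigma_i^2\|Q_{\tilde U}u_i\|^2$. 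For $i\le k$ I would bound $\|Q_{\tilde U}u_i\|^2=\sum_{i'>k}(\tilde u_{i'}^\top u_i)^2$ via the singular-vector relations: from $\mB\tilde v_{i'}=\tilde\sigma_{i'}\tilde u_{i'}$, $\mB^\top\tilde u_{i'}=\tilde\sigma_{i'}\tilde v_{i'}$, $\A v_i=\sigma_i u_i$, $\A^\top u_i=\sigma_i v_i$ and $\mB=\A+\Em$, one gets, with $a=\tilde u_{i'}^\top u_i$, $b=\tilde v_{i'}^\top v_i$, $c_1=u_i^\top\Em\tilde v_{i'}$, $c_2=\tilde u_{i'}^\top\Em v_i$, the pair $(\tilde\sigma_{i'}-\sigma_i)(a+b)=c_1+c_2$ and $(\tilde\sigma_{i'}+\sigma_i)(a-b)=c_1-c_2$, whence $|a|\le(|c_1|+|c_2|)/|\tilde\sigma_{i'}-\sigma_i|$. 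Using $|\tilde\sigma_{i'}-\sigma_i|\ge\tfrac12\sigma_i$ for $i'>k\ge i$, squaring, summing over $i'$, and using $\sum_{i'}c_1^2\le\|\Em^\top u_i\|^2\le\twonorm{\Em}^2$ and $\sum_{i'}c_2^2\le\|\Em v_i\|^2\le\twonorm{\Em}^2$, gives $\|Q_{\tilde U}u_i\|\le 4\twonorm{\Em}/\sigma_i$ (the finitely many indices with $\tilde\sigma_{i'}=0$ obey the same bound by a direct argument, taking $c_1=0$). Hence $\frob{Q_{\tilde U}P_k(\A)}^2\le\sum_{i\le k}16\twonorm{\Em}^2=16k\twonorm{\Em}^2$, and assembling the three estimates gives $\frob{P_k(\A+\Em)-P_k(\A)}\le c(\sqrt k\,\twonorm{\Em}+\frob{\Em})$. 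The one genuinely delicate point, and the reason the bound has exactly this shape, is that in the third term one must retain the $\sigma_i$-dependence of $\|Q_{\tilde U}u_i\|$ and sum in quadrature against the orthonormal $v_i$; a rank-times-operator-norm estimate there would cost an extra $\sqrt k$ and bring back the condition number.
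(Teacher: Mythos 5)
Your proof is correct and follows essentially the same route as the paper's: the same three-term decomposition of $P_k(\A+\Em)-P_k(\A)$, with the crucial cross term handled by the same trick of weighting each per-singular-vector $\sin\Theta$ leakage (of order $\twonorm{\Em}/\sigma_i$) by $\sigma_i$ and summing in quadrature against an orthonormal family to get $\sqrt{k}\,\twonorm{\Em}$, plus Frobenius-norm Davis--Kahan/Wedin bounds for the remaining $O\left(\frob{\Em}\right)$ terms. The differences are minor: you decompose $P_k(\A)$ along the perturbed subspace rather than $P_k(\A+\Em)$ along the original one, and you derive the per-vector angle bounds directly from the singular-vector equations (covering the asymmetric case), whereas the paper works in the symmetric setting (justified by its dilation reduction) and cites Bhatia's Davis--Kahan theorem.
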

In contrast to the Davis-Kahan theorem, which establishes a bound on the perturbation of the space of singular vectors, Lemma~\ref{lem:daviskahan-approx} establishes a bound on the perturbation of the best rank-$k$ approximation of a matrix $\A$ with good eigen gap, under small perturbations. This is a very natural quantity while considering perturbations of low rank approximations, and we believe it may find applications in other scenarios as well. A final remark regarding Lemma~\ref{lem:daviskahan-approx}: we suspect it might be possible to tighten the right hand side of the result to $c \min\left(\sqrt{k} \twonorm{\Em}, \frob{\Em}\right)$, but have not been able to prove it.
\newpage
\section{Singular Value Projection}\label{sec:svp}
\begin{algorithm}[t]
 \caption{SVP for matrix completion}
 \begin{algorithmic}[1]
   \STATE {\bf Input}: $\Omega, P_{\Omega}(\M), r, \epsilon$
   \STATE $T \leftarrow \log \frac{(n_1+n_2) \infnorm{\M}}{\epsilon}$
   \STATE Partition $\Omega$ randomly into $T$ subsets $\set{\Omega_t: t\in[T]}$
   \STATE $\Mto \leftarrow 0$
   \FOR{$t \leftarrow 1,\cdots,T$}
   \STATE $\Mto \leftarrow P_r\left(\Mto[t-1] - \frac{n_1n_2}{\abs{\Omega_t}}P_{\Omega_t}\left(\Mto[t-1]-\M\right)\right)$
   \ENDFOR
   \STATE {\bf Output}: $\Mto[T]$
 \end{algorithmic}
 \label{algo:svp}
\end{algorithm}
Before we go on to prove Theorem~\ref{thm:main}, in this section we will analyze the basic \svp~algorithm (Algorithm~\ref{algo:svp}), bounding its sample complexity and thereby resolving a question posed by Jain et al. \cite{JainMD10}. This analysis also serves as a warm-up exercise for our main result and brings out the key ideas in analyzing the $\ell_\infty$ norm potential function while also highlighting some issues with Algorithm~\ref{algo:svp} that we will fix later on.

As is clear from the pseudocode in Algorithm~\ref{algo:svp}, \svp~is a simple projected gradient descent method for solving the matrix completion problem.
Note that Algorithm~\ref{algo:svp} first splits the set $\Omega$ into $T$ random subsets and updates iterate $\Mto$ using $\Omega_t$. This step is critical for analysis as it ensures that $\Omega_t$ is independent of $\Mto[t-1]$, allowing for the use of standard tail bounds. The following theorem is our main result for Algorithm~\ref{algo:svp}:
\begin{theorem}\label{thm:cond_svp}
Suppose $\M$ and $\Omega$ satisfy Assumptions~\ref{assump:incoh} and~\ref{assump:unif} respectively with
\begin{align*}
\expec{\abs{\Omega}} \geq C \alpha \mu^4 \kappa^2 r^5 n \left(\log^2 n\right) T,
\end{align*}
where $n=n_1+n_2, \alpha > 1, \kappa=\left(\frac{\sigma_1}{\sigma_r}\right)$ with $\sk[1] \geq \cdots \geq \sk[r]$ denoting the singular values of $\M$, $T=\log \frac{100 \mu^2 r \twonorm{\M}}{\epsilon}$ and $C>0$ is a large enough global constant. Then, the output of Algorithm~\ref{algo:svp} satisfies (w.p. $\geq 1-T{\min(n_1,n_2)^{-10-\log \alpha}}$):
$ \frob{\Mto[T]-\M} \leq \epsilon$
\end{theorem}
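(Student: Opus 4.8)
The plan is to use $\infnorm{\Mto[t]-\M}$ as the potential function, show it decays by a factor $\tfrac12$ per iteration, and pass to the Frobenius norm only at the very end; the per-iteration decay is driven entirely by Lemma~\ref{lem:error-decay-general}. Since that lemma is stated for symmetric matrices, I first reduce to that case by the standard Hermitian dilation of $\M$ (which doubles the rank, preserves $\kappa$, and preserves $\mu$ up to the aspect ratio of $\M$, and is what produces the $\min(n_1,n_2)$ in the failure probability); below I therefore take $\M\in\R^{n\times n}$ symmetric, rank $r$, $\mu$-incoherent, with singular values $\sigma_1\ge\cdots\ge\sigma_r>0$ and $\kappa=\sigma_1/\sigma_r$. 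Note $\infnorm{\M}\le \mu^2 r\sigma_1/n$ by applying incoherence twice.

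Fix an iteration $t$ and set $\Em_{t-1}\defas\Mto[t-1]-\M$. Because $\Omega$ is split into the independent blocks $\Omega_1,\dots,\Omega_T$, the iterate $\Mto[t-1]$ (hence $\Em_{t-1}$) is a function of $\Omega_1,\dots,\Omega_{t-1}$ only, so it is independent of $\Omega_t$. Writing $p$ for the per-iteration sampling probability (the normalization $n_1n_2/|\Omega_t|$ in Algorithm~\ref{algo:svp} equals $1/p$ up to a $1\pm o(1)$ factor with high probability), the update becomes, conditionally on $\Em_{t-1}$, $\Mto[t]=\Pk[r](\M+\wH_{t-1})$ with $\wH_{t-1}\defas\Em_{t-1}-\tfrac1p\Om{\Em_{t-1}}$. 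Over the randomness of $\Omega_t$ the entries of $\wH_{t-1}$ are independent, mean zero, and a one-line Bernoulli computation gives $\expec{\abs{(\wH_{t-1})_{ij}}^a}\le 2\,\infnorm{\Em_{t-1}}^a/p^{a-1}$ for every $a\ge 2$. Hence, taking $\beta_{t-1}\defas\sqrt{2n/p}\,\infnorm{\Em_{t-1}}$, the rescaled matrix $\Hmat\defas\wH_{t-1}/\beta_{t-1}$ satisfies $\expec{\abs{H_{ij}}^a}\le 1/n$ for all $2\le a\le\log n$ (equality at $a=2$, which is the binding case since $np\gg 1$). Provided $\abs{\beta_{t-1}}\le\sigma_r/(200\sqrt\alpha)$, Lemma~\ref{lem:error-decay-general} (with $k=r$, so $\sigma_{r+1}=0$ since $\M$ has rank exactly $r$) then yields, with probability at least $1-n^{-10-\log\alpha}$,
\[
\infnorm{\Em_t}=\infnorm{\M-\Pk[r](\M+\beta_{t-1}\Hmat)}\le\frac{15\,\mu^2 r^2\sqrt\alpha\log n}{n}\,\beta_{t-1}=\frac{C'\mu^2 r^2\sqrt\alpha\log n}{\sqrt{np}}\,\infnorm{\Em_{t-1}}.
\]

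Now chain this across iterations. Starting from $\Mto[0]=0$ gives $\infnorm{\Em_0}=\infnorm{\M}\le\mu^2 r\sigma_1/n$. If the contraction factor $\gamma\defas C'\mu^2 r^2\sqrt\alpha\log n/\sqrt{np}$ is at most $\tfrac12$, induction gives $\infnorm{\Em_t}\le 2^{-t}\infnorm{\M}$ for $0\le t\le T$; in particular $\beta_{t-1}\le\beta_0=\sqrt{2n/p}\,\infnorm{\M}$, so the hypothesis $\abs{\beta_{t-1}}\le\sigma_r/(200\sqrt\alpha)$ needs to be checked only for $t=1$. The two requirements $\gamma\le\tfrac12$ and $\beta_0\le\sigma_r/(200\sqrt\alpha)$ reduce to $np\gtrsim\alpha\mu^4 r^4\log^2 n$ and $np\gtrsim\alpha\mu^4 r^2\kappa^2$ respectively, and since $p=\Theta(\expec{\abs{\Omega}}/(n^2 T))$ both follow from the stated bound $\expec{\abs{\Omega}}\ge C\alpha\mu^4\kappa^2 r^5 n\log^2 n\cdot T$ (which is deliberately not tight in its $r$- and $\kappa$-dependence). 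A union bound over the $T$ iterations gives overall failure probability $\le T\,n^{-10-\log\alpha}$. Finally, with $T=\log(100\mu^2 r\,\twonorm{\M}/\epsilon)$ we obtain $\frob{\Mto[T]-\M}\le n\,\infnorm{\Em_T}\le n\cdot 2^{-T}\infnorm{\M}\le 2^{-T}\mu^2 r\sigma_1=\epsilon/100\le\epsilon$, as claimed.

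The crux, granting Lemma~\ref{lem:error-decay-general}, is twofold and is conceptual rather than computational: (i) recognizing that the masked residual $\wH_{t-1}$ fits the lemma's hypotheses with the scaling $\beta_{t-1}=\Theta(\sqrt{n/p})\,\infnorm{\Em_{t-1}}$ — this is exactly where the $\ell_\infty$ potential is essential, since the relevant moment bounds are entrywise, whereas a spectral or Frobenius potential would force control of the incoherence of $\Em_{t-1}$, which we have no handle on; and (ii) setting up the conditioning so that the lemma (whose conclusion is conditional on $\Em_{t-1}$) can be applied at every step — this is precisely why $\Omega$ is split into $T$ independent blocks, and is what makes the $T$-fold union bound legitimate. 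The symmetrization step, with its incoherence/dimension accounting, is the other place needing care and the source of the loose $r$- and $\kappa$-exponents. All the genuine difficulty is inside Lemma~\ref{lem:error-decay-general} itself, which this theorem invokes as a black box.
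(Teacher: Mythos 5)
Your proposal is correct and follows essentially the same route as the paper's proof: dilate to the symmetric case, exploit the partition of $\Omega$ so that $\Omega_t$ is independent of $\Mto[t-1]$, verify the moment conditions for the rescaled masked residual (the paper packages this as Lemma~\ref{lem:sat-defn}) with $\beta=\Theta\bigl(\sqrt{n/p}\,\infnorm{\Em}\bigr)$, and invoke Lemma~\ref{lem:error-decay-general} with $k=r$ to get a geometric decay of the $\ell_\infty$ potential, finishing with a union bound over the $T$ iterations and a trivial $\ell_\infty$-to-Frobenius conversion. The only differences are expository (you check the $\beta\le\sigma_r/(200\sqrt\alpha)$ condition explicitly at $t=1$ and spell out the final norm conversion, which the paper leaves implicit), so there is nothing substantive to add.
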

\begin{proof}
Using a standard dilation argument (Lemma~\ref{lem:symm}), it suffices to prove the result for symmetric matrices.
Let $p = \frac{\expec{\abs{\Omega_t}}}{n^2} = \frac{\expec{\abs{\Omega}}}{n^2T}$ be the probability of sampling in each iteration. Now, let $\Em=\Mto[t-1]-\M$ and $\wH=\Em-\frac{1}{p}P_{\Omega_t}(\Em)$. Then, the SVP update (line 6 of Algorithm~\ref{algo:svp}) is given by: $\Mto=\Pk[r](\M+\wH)$.
Since $\Omega_t$ is sampled uniformly at random, it is easy to check that $\mathbb{E}[\widehat{H}_{ij}]=0$ and $\expec{\abs{\widehat{H}_{ij}}^s} \leq \beta^s/n$ where $\beta=\frac{2\sqrt{n}\infnorm{E}}{\sqrt{p}}\leq \frac{2 \mu^2 r\sk[1]}{\sqrt{n p}}$ (Lemma~\ref{lem:sat-defn}). By our choice of $p$, we have $\beta<\frac{\sk[r]}{200\sqrt{\alpha}}$. Applying Lemma~\ref{lem:error-decay-general} with $k=r$, we have 
$\infnorm{\Mto-\M} \leq \frac{15\mu^2 r^2}{n} \beta \sqrt{\alpha} \log n \leq (1/30 C) \infnorm{\Em} = \frac{1}{2}\infnorm{\Mto[t-1]-\M}$, where the last inequality is obtained by selecting $C$ large enough. The theorem is immediate from this error decay in each step.
\end{proof}
\newpage

\section{Stagewise-SVP}\label{sec:stagewise}

\begin{algorithm}[t]
  \caption{Stagewise SVP (\stsvp) for matrix completion}
  \label{algo:newstagewise}
  \begin{algorithmic}[1]
    \STATE {\bf Input}: $\Omega, P_{\Omega}(\M), \epsilon, r$
    \STATE $T \leftarrow \log\frac{100 \muzero^2 r \twonorm{\M}}{\epsilon}$
    \STATE Partition $\Omega$ into $r \log n$ subsets $\set{\Omega_{k,t}: k\in[r],t\in[\log n]}$ uniformly at random
    \STATE $k \leftarrow 1$,  $\Mt[0] \leftarrow 0$
    \FOR{$k \leftarrow 1,\cdots,r$}
    \STATE {\em /* Stage-$k$ */}
    \FOR{$t = 1,\cdots,\log n$}
    \STATE $\Mt \leftarrow \pgd(\Mt[t-1], P_{\Omega_{k,t}}(M), \Omega_{k,t}, k)${\em /* SVP Step with re-sampling*/}
\hspace{1em}\rlap{\smash{$\left.\begin{array}{@{}c@{}}\\{}\\{}\end{array} \color{red} \right\}%
          \color{red}\begin{tabular}{l}{Step I}\end{tabular}$}}
    \ENDFOR
	\IF{$\sigma_{k+1}\left(\gd\left(\Mt[\log n], P_{\Omega_{k,\log n}}(\M), \Omega_{k,\log n}\right) \right) > \frac{\sigma_k\left(\Mt[\log n]\right)}{n^2}$}
	\STATE $\Xkpt[0] \leftarrow \Mt[T]$ {\em /* Initialize for next stage and continue*/}
\hspace{6.7em}\rlap{\smash{$\left.\begin{array}{@{}c@{}}\\{}\\{}\end{array} \color{red} \right\}%
          \color{red} \begin{tabular}{l} {Step II} \end{tabular}$}}
        \STATE {\bf continue}
	\ENDIF
	\FOR{$t = \log n+1,\cdots,\log n+T$}
	\STATE $\Mt[t] \leftarrow \pgd(\Mt[t-1],P_{\Omega}(\M), \Omega, k)$ \hspace*{10pt}{\em /* SVP Step without re-sampling */}
\rlap{\smash{$\left.\begin{array}{@{}c@{}}\\{}\\{}\end{array} \color{red} \right\}%
          \color{red} \begin{tabular}{l} {Step III} \end{tabular}$}}
	\ENDFOR
	\FOR{$t = \log n+T+1,\cdots,\log n+T+\log n$}
        \STATE $\Mt[t] \leftarrow \pgd(\Mt[t-1], P_{\Omega_{k,t}}(M), \Omega_{k,t}, k)$\hspace*{10pt}{\em /* SVP Step with re-sampling */}
\rlap{\smash{$\left.\begin{array}{@{}c@{}}\\{}\\{}\end{array} \color{red} \right\}%
          \color{red} \begin{tabular}{l} {Step IV} \end{tabular}$}}
	\ENDFOR
        \STATE $\Xkpt[0] \leftarrow \Mt[t]$ {\em /* Initialization for next stage */}
        \STATE {\bf Output}: $\Mt[t]$ if $\sigma_{k+1}\left(GD(\Mt[t- 1],P_{\Omega_{k,t}}(\M), \Omega_{k, t})\right) < \frac{\epsilon}{10 \muzero^2 r}$
    \ENDFOR
  \end{algorithmic}
\end{algorithm}

\floatname{algorithm}{Sub-routine}

\begin{algorithm}[t!]\caption{Projected Gradient Descent (\pgd)}
  \begin{algorithmic}[1]
    \STATE {\bf Input}: $X\in \R^{n_1\times n_2}, P_{\Omega}(\M), \Omega, k$
    \STATE {\bf Output}: $X_{next}\leftarrow P_k(X-\frac{n_1n_2}{|\Omega|}P_{\Omega}(X-M))$
  \end{algorithmic}
\end{algorithm}

\begin{algorithm}[t!]\caption{Gradient Descent (\gd)}
  \begin{algorithmic}[1]
    \STATE {\bf Input}: $X\in \R^{n_1\times n_2}, P_{\Omega}(\M), \Omega$
    \STATE {\bf Output}: $X_{next}\leftarrow X-\frac{n_1n_2}{|\Omega|}P_{\Omega}(X-M)$
  \end{algorithmic}
\end{algorithm}
\floatname{algorithm}{Algorithm}
Theorem~\ref{thm:cond_svp} is suboptimal in its sample complexity dependence on the rank, condition number and desired accuracy. In this section, we will fix two of these issues -- the dependence on condition number and desired accuracy -- by designing a stagewise version of Algorithm~\ref{algo:svp} and proving Theorem~\ref{thm:main}.

Our algorithm, \stsvp~(pseudocode presented in Algorithm~\ref{algo:newstagewise}) runs in $r$ stages, where in the $k^{\textrm{th}}$ stage, the projection is onto the set of {\em rank-$k$ matrices}. In each stage, the goal is to obtain an approximation of $\M$ up to an error of $\sk[k+1]$. In order to do this, we use the basic \svp~updates, but in a very specific way, 
so as to avoid the dependence on condition number and desired accuracy. 
\begin{itemize}
\item
\textbf{(Step I) Apply SVP update with fresh samples for $\log n$ iterations}: Run $\log n$ steps of SVP update \eqref{eq:svp}, with fresh samples in each iteration. Using fresh samples allows us to use Lemma~\ref{lem:error-decay-general} ensuring that the $\ell_{\infty}$ norm of the error between our estimate and $\M$ decays to $\infnorm{ \Mt[\log n]-\M} = \order{\frac{1}{n}\left(\sk[k+1] + \frac{\sk}{n^3}\right)}$.
\item
\textbf{(Step II) Determine if $\sk[k+1]>\frac{\sk}{n^3}$}: Note that we can determine this, by using the $(k+1)^{\textrm{th}}$ singular value of the matrix obtained after the gradient step, i.e., $\sigma_{k+1}(\Mt[\log n]-\frac{1}{p}P_{\Omega_{k, \log n}}(\Mt[\log n]-\M))$. If true, the error $ \infnorm{ \Mt[\log n]-\M} = \order{\frac{\sk[k+1]}{n}}$, and so the algorithm proceeds to the $(k+1)^{\textrm{th}}$ stage.
\item
\textbf{(Step III) If not (i.e., $\sk[k+1] \leq \frac{\sk}{n^3}$), apply SVP update for $T = \log \frac{1}{\epsilon}$ iterations with same samples}: If $\sk[k+1] \leq \frac{\sk}{n^3}$, we can use Lemma~\ref{lem:error-decay-samesamples} to conclude that after $\log \frac{1}{\epsilon}$ iterations, the Frobenius norm of error is $\frob{ \Mt[\log n+T] - \M}=\order{{n} \sk[k+1] + \epsilon}$.
\item
\textbf{(Step IV) Apply \svp~update with fresh samples for $\log n$ iterations}: To set up the invariant $\infnorm{ \X_{k+1,0} - \M} = \order{\sigma_{k+1}/n}$ for the next stage, we wish to convert our Frobenius norm bound $\frob{\Mt[\log n+T]-\M} = \order{{n} \sk[k+1]}$ to an $\ell_\infty$ bound $\infnorm{\Mt[2 \log n+T]-\M} = \order{\frac{\sk[k+1]}{n}}$. Since $\sk[k+1] < \frac{\sk}{n^3}$, we can bound the initial Frobenius error by $\order{\frac{1}{n}\left(\left(\frac{1}{2}\right)^{\hT} \sk + \sk[k+1] \right)}$ for some ${\hT} = \order{\log \frac{\sk}{n^2\sk[k+1]}}$. As in  Step I, after $\log n$ \svp~updates with fresh samples, Lemma~\ref{lem:error-decay-general} lets us conclude that $\infnorm{\Mt[2 \log n+T]-\M} = \order{\frac{\sk[k+1]}{n}}$, setting up the invariant for the next stage.
\end{itemize}

\subsection{Analysis of \stsvp~(Proof of Theorem~\ref{thm:main})}\label{sec:st_svp_analysis}
We will now present a proof of Theorem~\ref{thm:main}.
\begin{proof}[Proof of Theorem~\ref{thm:main}]
Just as in Theorem~\ref{thm:cond_svp}, it suffices to prove the result for when $\M$ is symmetric.
For every stage, we will establish the following invariant:
\begin{align}\label{eq:st_invar}
	\infnorm{\Mt[0] - \M} < \frac{4\mu^2 r^2}{n}  {\sk[k+1]}.
\end{align}
We will use induction. \eqref{eq:st_invar} clearly holds for the base case $k=1$. Now, suppose \eqref{eq:st_invar} holds for the $k^{\textrm{th}}$ stage, we will prove that it holds for the $(k+1)^{\textrm{th}}$ stage. The analysis follows the four step outline in the previous section: 

\textbf{Step I}: Here, we will show that for every iteration $t$, we have:
\begin{align}
	\infnorm{\Mt - \M} < \frac{4\mu^2 r^2}{n} \gkt,\ \text{where}\  \gkt \defas {\sk[k+1]} + \left(\frac{1}{2}\right)^{t-1} {\sk}.\label{eqn:gkt-update}
\end{align}
\eqref{eqn:gkt-update} holds for $t=0$ by our induction hypothesis \eqref{eq:st_invar} for the $k$-th stage. Supposing it true for iteration $t$, we will show it for iteration $t+1$. The $(t+1)^{\textrm{th}}$ iterate is given by: 
\begin{equation}\label{eqn:update} 
\Mt[t+1] = P_k\left(\M+\beta \Hmat\right),\text{ where }\Hmat=\frac{1}{\beta}\left(\Em-\frac{1}{p}P_{\Omega_{k, t}}(\Em)\right), \Em=\Mt-\M,\ 
\end{equation}
$p = \frac{\expec{\abs{\Omega_{k,t}}}}{n^2}=\frac{C\alpha\mu^4r^4\log^2 n}{n}$, and $\beta=\frac{2\sqrt{n}\infnorm{\Em}}{\sqrt{p}}\leq \frac{8\mu^2 r^2\gkt}{\sqrt{n\cdot p}}$. Our hypothesis on the sample size tells us that $\beta\leq \abs{ \sigma_k} / (200\sqrt{\alpha})$ and Lemma~\ref{lem:sat-defn} tells us that $\Hmat$ satisfies the hypothesis of Lemma~\ref{lem:error-decay-general}. So we have: 
\begin{align*}
\infnorm{\Mt[t+1] - \M} < \frac{\mu^2 r^2 }{n} \left({\sk[k+1]} + 15 \beta \sqrt{\alpha} \log n\right)
< \frac{\mu^2 r^2}{n} \left({\sk[k+1]} + \frac{1}{9} \gkt \right) \leq \frac{10 \mu^2 r^2}{9n} \gkt[t+1].
\end{align*}
This proves~\eqref{eqn:gkt-update}. Hence, after $\log n$ steps, we have:
\begin{align}
	\infnorm{\Mt[\log n] - \M} < \frac{10\mu^2 r^2}{9n}\left(\frac{{\sk}}{n^3} + {\sk[k+1]} \right).\label{eq:st_ph1}
\end{align}

\textbf{Step II}: Let $\G \defas \Mt[\log n] - \frac{1}{p}P_{\Omega_{k,\log n}}\left(\Mt[\log n] - \M\right)=\M+\beta \Hmat$ be the gradient update with notation as above.
A standard perturbation argument (Lemmas~\ref{lem:cond_spec1} and~\ref{lem:weyl-perturbation}) tells us that:
\begin{align*}
\twonorm{\G - \M} < 3 \beta \sqrt{\alpha} \leq \frac{24\mu^2 r^2 \gkt[\log n]}{\sqrt{np}}
< \frac{1}{100} \left(\frac{{\sk}}{n^3} +{\sk[k+1]} \right).
\end{align*}
So if $\sigma_{k+1}(\G) > \frac{\sigma_k(\G)}{n^3}$, then we have $\sk[k+1] > \frac{9\sk}{10n^3}$.
Since we move on to the next stage with $\Xkpt[0] = \Mt[\log n]$,~\eqref{eq:st_ph1} tells us that:
\begin{align*}
\infnorm{\Xkpt[0] - \M} = \infnorm{\Mt[\log n] - \M}
&\leq \frac{10 \mu^2 r^2}{9 n} \left(\frac{\sk}{n^3} + \sk[k+1]\right)
\leq \frac{2 \mu^2 r^2 }{n} \left( 2 \sk[k+1] \right),
\end{align*}
showing the invariant for the $(k+1)^{\textrm{th}}$ stage.

\textbf{Step III}:
On the other hand, if $\sigma_{k+1}(\G) \leq \frac{\sigma_k(\G)}{n^3}$, then Lemmas~\ref{lem:weyl-perturbation} and~\ref{lem:weyl-perturbation} tell us that $\sk[k+1] \leq \frac{11 \sk}{10 n^3}$. So, using Lemma~\ref{lem:error-decay-samesamples} with $T = \log \frac{1}{\epsilon}$ iterations, we obtain:
\begin{align}
\frob{\Mt[T+\log n] - \PkM } \leq \max\left(\epsilon, \frac{2}{{p}} \frob{\M - \PkM} \right).\label{eq:st_ph3a}
\end{align}
If $ \epsilon > \frac{2}{{p}} \frob{\M - \PkM}$, then we have:
\begin{align*}
\frob{\Mt[T+\log n] - \M} \leq \frob{\Mt[T+\log n] - \PkM} + \frob{\M - \PkM} \leq 2 \epsilon.
\end{align*}
On the other hand, if $\epsilon \leq \frac{2}{{p}} \frob{\M - \PkM}$, then we have:
\begin{align}
\infnorm{\Mt[T+\log n] - \M } &\leq
\frob{\Mt[T+\log n] - \PkM } + \infnorm{\M - \PkM} \nonumber\\
&\leq \frac{2}{{p}} \frob{\M - \PkM} + \frac{\mu^2 r^2 \sk[k+1]}{n}
\leq \left(2\sqrt{r}n+\frac{\mu^2 r^2}{n}\right) \sk[k+1] \nonumber\\
&\leq \frac{2 \mu^2 r^2}{n}\left(\left(\frac{1}{2}\right)^{\log \frac{\sk}{n^2 \sk[k+1]}} \sk + \sk[k+1] \right).\label{eq:st_ph3}
\end{align}

\textbf{Step IV}: Using \eqref{eq:st_ph3} and ``fresh samples'' analysis as in Step I (in particular~\eqref{eqn:gkt-update}), we have: 
\begin{align*}
\infnorm{\Mt[T+2 \log n] - \M } \leq
\frac{10 \mu^2 r^2}{9n}\left(\left(\frac{1}{2}\right)^{\log \frac{\sk}{\sk[k+1]}} \sk + \sk[k+1] \right)
\leq \frac{2 \mu^2 r^2}{n}\left(2 \sk[k+1]\right),
\end{align*}
which establishes the invariant for the $(k+1)^{\textrm{th}}$ stage.

Combining the invariant \eqref{eq:st_invar} with the exit condition as in Step III, we have: $\|\widehat{\M}-\M\|_F\leq \epsilon$ where $\widehat{\M}$ is the output of the algorithm. As there are $r$ stages, and in each stage, we need $2\log n$ sets of samples of size $O(pn^2)$. Hence, the total samplexity is $|\Omega|=\order{\alpha \mu^4 r^5n\log^3 n}$. Similarly, total computation complexity is $\order{\alpha \mu^4 r^7n\log^3 n\log(\|M\|_F/\epsilon)}$.
\end{proof}


\section{Discussion and Conclusions}\label{sec:conc}
In this paper, we proposed a fast projected gradient descent based algorithm for solving the matrix completion problem. The algorithm runs in time $\order{nr^7\log^3 n \log 1/\epsilon}$, with a sample complexity of $\order{nr^5 \log^3 n}$.
To the best our knowledge, this is the first near linear time algorithm for exact matrix completion with sample complexity independent of $\epsilon$ and condition number of $\M$.

The first key idea behind our result is to use the $\ell_\infty$ norm as a potential function which entails bounding all the terms of an explicit Taylor series expansion.
The second key idea is an extension of the Davis-Kahan theorem, that provides  perturbation bound  for the best rank-$k$ approximation of a matrix with good eigen-gap. We believe both these techniques may find applications in other contexts.

Design an efficient algorithm with information-theoretic optimal sample complexity  $\abs{\Omega}=\order{n r \log n}$ is still open; our result is suboptimal by a factor of ${r^4 \log^2 n}$ and nuclear norm approach is suboptimal by a factor of $\log n$. Another interesting direction in this area is to design optimal algorithms that can handle sampling distributions that are  widely observed in practice, such as the power law distribution\cite{MekaJD09}.

 \clearpage
 \bibliographystyle{alpha}

\clearpage
\appendix

\section{Preliminaries and Notations for Proofs}
The following lemma shows that wlog we can assume $\M$ to be a symmetric matrix. A similar result is given in Section D of \cite{Hardt2013}. 
\begin{lemma}
  \label{lem:symm}
Let $\M\in \R^{n_1\times n_2}$ and $\Omega\subseteq [n_1]\times [n_2]$ satisfy Assumption~\ref{assump:incoh} and \ref{assump:unif}, respectively. Then, there exists a symmetric $\widetilde{\M}\in \R^{n\times n}$, $n=n_1+n_2$, s.t. $\widetilde{\M}$ is of rank-$2r$, incoherence of $\widetilde{\M}$ is twice the incoherence of $\M$. Moreover, there exists $|\widetilde{\Omega}|\subseteq [n]\times [n]$ that satisfy Assumption \ref{assump:unif}, $P_{\widetilde{\Omega}}(\widetilde{\M})$ is efficiently computable, and the output of a SVP update \eqref{eq:svp} with $P_{\Omega}(\M)$ can also be obtained by the SVP update of $P_{\widetilde{\Omega}}(\widetilde{\M})$. 
\end{lemma}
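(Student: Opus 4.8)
The plan is the standard \emph{dilation} (Jordan--Wielandt) reduction. Writing $\M=\Us\So\Vstr$ for the SVD, with $\Us\in\R^{n_1\times r}$, $\Vs\in\R^{n_2\times r}$, I would take
\[
\widetilde{\M}\defas\begin{pmatrix}0 & \M\\ \trans{\M} & 0\end{pmatrix}\in\R^{n\times n},\qquad n\defas n_1+n_2 ,
\]
which is symmetric. The first step is to record its spectral structure: for each $i\in[r]$ the vectors $\tfrac1{\sqrt2}\begin{pmatrix} u_i^*\\ v_i^*\end{pmatrix}$ and $\tfrac1{\sqrt2}\begin{pmatrix} u_i^*\\ -v_i^*\end{pmatrix}$ are orthonormal eigenvectors of $\widetilde{\M}$ with eigenvalues $+\sigma_i$ and $-\sigma_i$ (from $\M v_i^*=\sigma_i u_i^*$ and $\trans{\M}u_i^*=\sigma_i v_i^*$), and the remaining eigenvalues vanish. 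Hence $\widetilde{\M}$ has rank $2r$, its nonzero singular values are the $\sigma_i$, each of multiplicity two, and the matrix $\widetilde{\Us}$ of its top-$2r$ singular vectors has columns of the two forms above. Reading off incoherence from this explicit form: for $i\le n_1$, $\|\e_i^{\top}\widetilde{\Us}\|_2=\|\e_i^{\top}\Us\|_2\le\mu\sqrt r/\sqrt{n_1}$, and for $i=n_1+j$, $\|\e_i^{\top}\widetilde{\Us}\|_2=\|\e_j^{\top}\Vs\|_2\le\mu\sqrt r/\sqrt{n_2}$; in the regime of interest $n_1$ and $n_2$ are within a constant factor (and one may pad with zero rows/columns to arrange this), so both are at most $2\mu\sqrt{2r}/\sqrt n$, i.e.\ $\widetilde{\M}$ is $(2\mu)$-incoherent of rank $2r$.

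Next I would lift the sampling set. Define $\widetilde{\Omega}$ by placing a symmetric copy of $\Omega$ on the two off-diagonal blocks --- $(i,n_1+j)\in\widetilde\Omega$ and $(n_1+j,i)\in\widetilde\Omega$ iff $(i,j)\in\Omega$ --- together with a set of indices on the two diagonal blocks (say an independent $p$-sample) whose size is adjusted so that $n^2/|\widetilde\Omega|=n_1n_2/|\Omega|$ (possible since $|\Omega|\le n_1n_2$). Because $\widetilde{\M}$ vanishes on the diagonal blocks, $P_{\widetilde\Omega}(\widetilde{\M})$ is determined by $\Po(\M)$, is efficiently computable, and the diagonal-block indices play no role in any iterate. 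Since distinct off-diagonal index pairs are sampled independently with probability $p$, $\widetilde\Omega$ follows the symmetrized version of the uniform-sampling model at rate $p$; this is the one place the reduction departs from Assumption~\ref{assump:unif} verbatim, and it is harmless because the sequel only uses the consequences of Assumption~\ref{assump:unif} --- the moment bounds on $\wH$ that feed Lemmas~\ref{lem:error-decay-general} and~\ref{lem:error-decay-samesamples}, and the spectral tail bounds behind the Weyl/Davis--Kahan steps --- all of which survive the symmetrization up to absolute constants.

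The last step is to verify that one SVP update on $(\M,\Omega)$ lifts to one SVP update on $(\widetilde{\M},\widetilde{\Omega})$. Running rank-$2r$ SVP on $\widetilde{\M}$ from $0$, maintain the invariant that the $t$-th iterate is block-anti-diagonal, $\widetilde{\X}_t=\begin{pmatrix}0&\X_t\\ \trans{\X_t}&0\end{pmatrix}$. Then $\widetilde{\X}_t-\widetilde{\M}$ vanishes on the diagonal blocks and equals $\X_t-\M$ (resp.\ its transpose) on the off-diagonal blocks, so, using $n^2/|\widetilde\Omega|=n_1n_2/|\Omega|$,
\[
\widetilde{\X}_t-\tfrac{n^2}{|\widetilde\Omega|}P_{\widetilde\Omega}\!\left(\widetilde{\X}_t-\widetilde{\M}\right)=\begin{pmatrix}0&\A\\ \trans{\A}&0\end{pmatrix},\qquad \A\defas\X_t-\tfrac{n_1n_2}{|\Omega|}\Om{\X_t-\M},
\]
which is precisely the argument of the rank-$r$ projection in one SVP step on $\M$. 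The elementary fact that closes the argument is that $\Pk[2r]\!\begin{pmatrix}0&\A\\ \trans{\A}&0\end{pmatrix}=\begin{pmatrix}0&\Pk[r](\A)\\ \trans{(\Pk[r](\A))}&0\end{pmatrix}$, immediate from the dilation's eigen-structure (pairs $\pm\sigma_i(\A)$, $i\le r$); hence $\widetilde{\X}_{t+1}$ is again block-anti-diagonal with top-right block $\Pk[r](\A)=\X_{t+1}$, the SVP iterate on $\M$. Iterating, the whole SVP trajectory on $\M$ is read off the trajectory on $\widetilde{\M}$, which reduces Theorems~\ref{thm:cond_svp} and~\ref{thm:main} to the symmetric case.

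The part I expect to need the most care is the sampling model: the dilation unavoidably produces a symmetric $\widetilde\Omega$ whose entries are not mutually independent, so Assumption~\ref{assump:unif} cannot be invoked literally. The resolution is to check that every lemma used downstream needs only the moment and tail bounds derived from it, and that those degrade by at most constant factors under the symmetric coupling. A secondary, minor point is that the clean ``$\times 2$'' incoherence bound holds only when $n_1\asymp n_2$ --- which is no loss, as the completion guarantee is vacuous otherwise.
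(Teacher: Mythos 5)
Your proposal is correct and follows essentially the same route as the paper: the symmetric dilation $\widetilde{\M}=\bigl(\begin{smallmatrix}0&\M\\ \trans{\M}&0\end{smallmatrix}\bigr)$, reading off rank $2r$ and incoherence from its eigenstructure, lifting $\Omega$ (with the unused diagonal-block/zero-part samples), and the block-anti-diagonal invariance of the SVP iterates via $\Pk[2r]$ of a dilation being the dilation of $\Pk[r]$. The only cosmetic difference is how the $n_1\neq n_2$ incoherence issue is handled (you pad/assume $n_1\asymp n_2$, the paper splits the columns into blocks of size $n_1$), and your explicit caveat about the symmetrized sampling model matching the symmetric independent-entry model used downstream is exactly the (implicit) content of the paper's sketch.
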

\begin{proof}[Proof of Lemma~\ref{lem:symm}]
Define the following symmetric matrix from $\M$ using a dilation technique:
\begin{align*}
  \widetilde{\M}=\left[\begin{matrix}0&\M\\ \trans{\M}&0\end{matrix}\right].
\end{align*}
Note that the rank of $\widetilde{\M}$ is $2\cdot r$ and the incoherence of $\widetilde{\M}$ is bounded by $(n_1+n_2)/n_2 \mu$ (assume $n_1\leq n_2$). Note that if $n_2>  n_1$, then we can split the columns of $\M$ in blocks of size $n_1$ and apply the argument separately to each block. 

Now, we can split $\Omega$ to generate samples from $\M$ and $\M^T$, and then augment redundant samples from the $0$ part above to obtain $\widetilde{\Omega}=[n]\times [n]$. 

Moreover, if we run the SVP update \eqref{eq:svp} with input $\widetilde{\M}$, $\widetilde{\X}$ and $\widetilde{\Omega}$, an easy calculation shows that the iterates satisfy:
\begin{align*}
  \widetilde{\X}_{+}=\left[\begin{matrix}0& \X_{+} \\ \trans{\X_{+}} &0\end{matrix}\right],
\end{align*}
where $\X_+$ is the output of \eqref{eq:svp} with input $\M$, $\X$, and $\Omega$. That is, a convergence result for $\widetilde{\X}_+$ would imply a convergence result for $\X_+$ as well. 
\end{proof}
{\bf For the remaining sections}, we assume (wlog) that $\M\in \R^{n\times n}$ is symmetric and $\M=\Us\Sigma\Ustr$ is the eigenvalue decomposition (EVD) of $\M$. Also, unless specified, $\sigma_i$ denotes the $i$-th eigenvalue of $\M$. 
\section{Proof of Lemma~\ref{lem:error-decay-general}}
Recall that we assume (wlog) that $\M\in \R^{n\times n}$ is symmetric and $\M=\Us\So\Ustr$ is the eigenvalue decomposition (EVD) of $\M$. Also, the goal is to bound $\|\Xplus-\M\|_\infty$, where $\Xplus=P_k(\M+\beta \Hmat)$ and $\Hmat$ is such that it satisfies the following definition: 
\begin{definition}\label{defn:moment-condns}
  $\Hmat$ is a symmetric matrix with each of its elements drawn independently, satisfying the following moment conditions:
\begin{center}
\begin{tabular}{ccc}
  $\expec{h_{ij}} = 0,$ &$|h_{ij}|<1$, & $\expec{\abs{h_{ij}}^k} \leq \frac{1}{n}$,
\end{tabular}
\end{center}
for $i,j \in [n]$ and $2 \leq k \leq 2 \log n$.
\end{definition}

That is, we wish to understand $\|\Xplus-\M\|_\infty$ under perturbation $\Hmat$. To this end, we first present a few lemmas that analyze how $\Hmat$ is obtained in the context of our \stsvp~ algorithm and also bounds certain key quantities related to $\Hmat$. We then present a few technical lemmas that are helpful for our proof of Lemma~\ref{lem:error-decay-general}. The detailed proof of the lemma is given in Section~\ref{sec:mainproof-error-decay-general}. See Section~\ref{sec:techproof-error-decay-general} for proofs of the technical lemmas. 
\subsection{Results for $\Hmat$}\label{sec:Hmat}
Recall that the SVP update \eqref{eq:svp} is given by: $\Xplus=P_k(\X-\frac{1}{p}P_{\Omega}(\X-\M))=P_k(\M+\Hmat)$ where $\Hmat=\mE-\frac{1}{p}P_{\Omega}(\mE)$ and $\mE=\X-\M$. Our first lemma shows that matrices of the form $\mE - \frac{1}{p}P_{\Omega}(\mE)$, scaled appropriately, satisfy Definition~\ref{defn:moment-condns}, i.e., satisfies the assumption of Lemma~\ref{lem:error-decay-general}.
\begin{lemma}\label{lem:sat-defn}
Let $\A$ be a symmetric $n \times n$ matrix. Suppose $\Omega \subseteq [n]\times[n]$ is obtained by sampling each element
with probability $p \in \left[\frac{1}{4n},0.5\right]$. Then the matrix
\begin{align*}
\mB \defas \frac{\sqrt{p}}{2 \sqrt{n} \infnorm{\A}} \left(\A - \frac{1}{p}\PoA\right)
\end{align*}
satisfies Definition~\ref{defn:moment-condns}.
\end{lemma}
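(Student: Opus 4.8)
\textbf{Proof plan for Lemma~\ref{lem:sat-defn}.}

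The plan is to verify the three moment conditions of Definition~\ref{defn:moment-condns} directly for $\mB$, working entrywise. Write $b_{ij}$ for the $(i,j)$ entry of $\mB$. Introduce the indicator $\delta_{ij} = \ind{(i,j)\in\Omega}$, so that $\delta_{ij}$ are independent Bernoulli$(p)$ random variables (for $i \le j$, with the symmetric convention $\delta_{ji}=\delta_{ij}$), and note that the $(i,j)$ entry of $\A - \frac{1}{p}\PoA$ is $A_{ij}\bigl(1 - \frac{1}{p}\delta_{ij}\bigr)$. Hence
\begin{align*}
b_{ij} = \frac{\sqrt{p}}{2\sqrt{n}\,\infnorm{\A}}\, A_{ij}\left(1 - \frac{1}{p}\delta_{ij}\right).
\end{align*}
Independence of the $b_{ij}$ (over the index set with $i \le j$) is immediate from independence of the $\delta_{ij}$, and symmetry is built into the construction. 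So it remains to check the mean-zero condition, the almost-sure bound $|b_{ij}| < 1$, and the $k$-th moment bound for $2 \le k \le 2\log n$.

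\textbf{First I would check $\expec{b_{ij}} = 0$.} Since $\expec{\delta_{ij}} = p$, we get $\expec{1 - \frac{1}{p}\delta_{ij}} = 1 - \frac{1}{p}\cdot p = 0$, and $A_{ij}$ is deterministic, so $\expec{b_{ij}} = 0$.

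\textbf{Next, the bound $|b_{ij}| < 1$.} The random factor $1 - \frac{1}{p}\delta_{ij}$ takes value $1$ (with probability $1-p$) or $1 - \frac{1}{p}$ (with probability $p$); since $p \le 1/2$ we have $\bigl|1 - \frac1p\bigr| = \frac1p - 1 \le \frac1p$, so $\bigl|1 - \frac1p\delta_{ij}\bigr| \le \frac1p$ always. Also $|A_{ij}| \le \infnorm{\A}$ by definition of the $\ell_\infty$ norm. Therefore
\begin{align*}
|b_{ij}| \le \frac{\sqrt p}{2\sqrt n\,\infnorm{\A}}\cdot \infnorm{\A}\cdot \frac1p = \frac{1}{2\sqrt{n p}} \le 1,
\end{align*}
using $p \ge \frac{1}{4n}$, i.e. $np \ge 1/4$, so $\sqrt{np}\ge 1/2$ and the bound is at most $1$ (and strictly less than $1$ for $n\ge 1$ with a tiny bit of slack, or one simply notes the inequality is what is needed; if strictness matters one uses $p > \frac{1}{4n}$ or absorbs it into the constants, as the rest of the paper does).

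\textbf{Finally, the $k$-th moment bound.} For $2 \le k \le 2\log n$,
\begin{align*}
\expec{|b_{ij}|^k} = \left(\frac{\sqrt p}{2\sqrt n\,\infnorm{\A}}\right)^{\!k} |A_{ij}|^k\, \expec{\left|1 - \tfrac1p\delta_{ij}\right|^k}.
\end{align*}
Compute $\expec{\bigl|1-\frac1p\delta_{ij}\bigr|^k} = (1-p)\cdot 1 + p\cdot\bigl(\frac1p - 1\bigr)^k \le p\cdot\frac{1}{p^k} + 1 = \frac{1}{p^{k-1}} + 1 \le \frac{2}{p^{k-1}}$ (the last step using $p^{k-1}\le 1$). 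Using $|A_{ij}| \le \infnorm{\A}$ and plugging in:
\begin{align*}
\expec{|b_{ij}|^k} \le \frac{p^{k/2}}{2^k n^{k/2}\,\infnorm{\A}^k}\cdot \infnorm{\A}^k \cdot \frac{2}{p^{k-1}} = \frac{2}{2^k}\cdot \frac{p^{k/2 - k + 1}}{n^{k/2}} = \frac{2}{2^k}\cdot \frac{1}{n^{k/2}\,p^{k/2 - 1}}.
\end{align*}
Since $p \le 1/2 \le 1$ and $k/2 - 1 \ge 0$ for $k \ge 2$, we have $p^{k/2-1} \ge \bigl(\frac{1}{4n}\bigr)^{k/2-1}$... rather, to get an upper bound on $\expec{|b_{ij}|^k}$ I want a \emph{lower} bound on $p^{k/2-1}$ in the denominator is unhelpful; instead note $n^{k/2} p^{k/2-1} = n\cdot (np)^{k/2-1} \ge n\cdot (1/4)^{k/2-1}$ using $np \ge 1/4$. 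Hence $\expec{|b_{ij}|^k} \le \frac{2}{2^k}\cdot \frac{4^{k/2-1}}{n} = \frac{2}{2^k}\cdot\frac{2^{k-2}}{n} = \frac{1}{2n} \le \frac{1}{n}$. This gives the required moment bound with room to spare, uniformly over $2 \le k \le 2\log n$ (in fact over all $k\ge 2$).

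\textbf{Main obstacle.} There is no serious obstacle here; the lemma is a routine computation. The one point requiring a little care is bookkeeping the exponents of $p$ and $n$ in the moment bound and using the two hypotheses $p \ge \frac{1}{4n}$ (to control $|b_{ij}| \le 1$ and to turn the $p^{k/2-1}$ factor into a constant via $np \ge 1/4$) and $p \le 1/2$ (to bound $|1 - \frac1p\delta_{ij}|$ and simplify $\bigl(\frac1p-1\bigr)^k \le p^{-k}$) in the right places. One should double-check the edge case $k=2$, where $p^{k/2-1} = p^0 = 1$ and the bound reads $\expec{|b_{ij}|^2} \le \frac{1}{2n}$, which is consistent, and confirm the claimed range $2\le k\le 2\log n$ is covered (it is, since the bound holds for all $k\ge 2$).
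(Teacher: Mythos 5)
Your proposal is correct and follows essentially the same route as the paper: a direct entrywise computation of $\expec{b_{ij}}$ and $\expec{|b_{ij}|^k}$ using $|A_{ij}|\leq \infnorm{\A}$ and the two-point distribution of $1-\tfrac1p\delta_{ij}$, with $np\geq 1/4$ absorbing the remaining powers of $np$. If anything, you are slightly more careful than the paper's one-line calculation, since you also verify the almost-sure bound $|b_{ij}|\leq 1$ and retain the $2^{1-k}$ factor needed to make the final step work under $np\geq 1/4$.
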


We now present a critical lemma for our proof which bounds $\|H^au\|_\infty$ for $2\leq a\leq \log n$. Note that the entries of $H^a$ can be dependent on each other, hence we cannot directly apply standard tail bounds. Our proof follows along very similar lines to Lemma~6.5 of \cite{ErdosKYY2012}; see Appendix~\ref{app:Hpbound} for a detailed proof. 
\begin{lemma}\label{lem:Htp-spectralnorm_app}
Suppose $\wH$ satisfies Definition~\ref{defn:moment-condns}.
Fix $1 \leq a \leq \log n$. Let $\er$ denote the $r^{\textrm{th}}$ standard basis vector.
Then, for any fixed vector $\u$, we have:
\begin{align*}
  \abs{\ip{\er}{\wH^a \u}} \leq \left(c \log n\right)^{a} \infnorm{\u} \; \forall \; r \in [n],
\end{align*}
with probability greater than $1 - n^{1 - 2 \log \frac{c}{4}}$.
\end{lemma}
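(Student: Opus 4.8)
\textbf{Proof plan for Lemma~\ref{lem:Htp-spectralnorm_app}.}

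The plan is to expand $\ip{\er}{\wH^a \u}$ as a sum over length-$a$ walks in the complete graph on $[n]$ and take high moments of the resulting random variable. Concretely, $\ip{\er}{\wH^a \u} = \sum_{i_1,\dots,i_{a-1}} h_{r i_1} h_{i_1 i_2} \cdots h_{i_{a-1} i_a} u_{i_a}$, where the sum runs over all tuples and $i_0 = r$. Following Lemma~6.5 of \cite{ErdosKYY2012}, I would fix an even integer $q \approx \log n$ and estimate $\expec{\abs{\ip{\er}{\wH^a \u}}^q}$ by expanding it into a sum over $q$ walks $W_1,\dots,W_q$, each of length $a$, starting at $r$. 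Because $\expec{h_{ij}} = 0$, only those terms survive in which every edge $\{i,j\}$ that appears, appears at least twice across the union of the $q$ walks; this is the standard ``each edge used $\geq 2$ times'' constraint from the moment method for Wigner-type matrices.

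The next step is the counting/bookkeeping argument. For a multigraph formed by $q$ closed-ish walks of length $a$ using $E$ distinct edges with each edge multiplicity $\geq 2$, the number of distinct vertices is at most $E + 1 \leq qa/2 + 1$ (since $2E \leq qa$), and so the number of free vertex labels to sum over is at most $n^{qa/2}$ up to the anchoring at $r$. Meanwhile each distinct edge contributes a factor $\expec{\abs{h_{ij}}^{(\text{its multiplicity})}} \leq 1/n$ by Definition~\ref{defn:moment-condns} (using $|h_{ij}|<1$ so higher moments are also $\leq 1/n$, valid as long as the multiplicities do not exceed $2\log n$, which holds since the total edge budget is $qa \leq 2\log^2 n$ — here I should be a little careful and perhaps take $q$ a slightly smaller multiple of $\log n$ so that $qa \le 2\log n$, or invoke $|h_{ij}|\le 1$ to bound all moments by the second moment $\le 1/n$ regardless). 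The $u$-factors contribute at most $\infnorm{\u}^q$. The combinatorial factor counting the ``shapes'' of such walk-collections is at most $(Ca)^{qa}$ or so — i.e., bounded by something of the form $(c'\log n)^{aq}$ after absorbing $a \le \log n$ — which is exactly where the $(c\log n)^a$ per-factor in the final bound comes from. Putting these together gives $\expec{\abs{\ip{\er}{\wH^a \u}}^q} \leq (c\log n)^{aq}\, n^{qa/2}\, n^{-E}\, \infnorm{\u}^q \cdot n$, and using $E \geq qa/2$ the powers of $n$ collapse to leave $\expec{\abs{\ip{\er}{\wH^a \u}}^q} \leq n \cdot (c\log n)^{aq} \infnorm{\u}^q$.

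Finally I would apply Markov's inequality: $\prob{\abs{\ip{\er}{\wH^a \u}} > (c\log n)^a \infnorm{\u}}$ is at most $n \cdot (c\log n)^{aq}\infnorm{\u}^q / ((c\log n)^a\infnorm{\u})^q$; choosing the constant in the threshold a factor $4$ larger than the constant coming out of the moment bound makes this $\leq n \cdot 4^{-aq} \leq n \cdot 4^{-q}$, and with $q = 2\log \frac{c}{4}\cdot$(something) $\approx \log n$ this is $n^{1 - 2\log(c/4)}$ as claimed; a union bound over the $n$ choices of $r$ is absorbed into the same expression (or costs one more factor of $n$ which I would fold into the exponent). The main obstacle is the combinatorial estimate in the middle step: correctly bounding the number of isomorphism classes of the union multigraph of $q$ walks subject to the multiplicity-$\geq 2$ constraint, and tracking how the walk length $a$ and the moment order $q$ enter the exponent, so that the final per-matrix-power cost is genuinely $(c\log n)^a$ rather than something larger. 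I expect to lean heavily on the encoding argument in \cite{ErdosKYY2012} (Lemma~6.5) for this, adapting it from the ``$\infnorm{\cdot}$ of $\wH^a \u$'' statement to our normalization, and being careful that Definition~\ref{defn:moment-condns} only guarantees moment control up to order $2\log n$, which constrains the admissible range of $q$ and $a$.
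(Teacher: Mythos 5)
Your overall strategy is the same as the paper's: both follow the moment-method of Lemma~6.5 of \cite{ErdosKYY2012}, expanding $\ip{\er}{\wH^a\u}$ over walks, keeping only terms where every entry of $\wH$ appears at least twice, cancelling the per-distinct-entry factor $1/n$ against the free vertex labels, and finishing with Markov plus a union bound over $r$. However, there is a genuine gap in your parameter choice and the accompanying combinatorial count. You take the moment order $q\approx\log n$, so the total number of steps is $qa$, which can be as large as $\log^2 n$. The only shape/partition count you can get cheaply (and the one the paper uses) is of order $(qa)^{qa}$: each step either creates a fresh vertex or returns to one of the at most $qa$ existing ones. With $q\approx\log n$ and $a$ up to $\log n$ this is $(\log^2 n)^{qa}$, and the Markov ratio against the threshold $\left((c\log n)^a\infnorm{\u}\right)^q$ becomes $(\log n/c)^{aq}$, which \emph{grows} with $n$ for any absolute constant $c$ — no decay at all. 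Your claimed shape bound ``$(Ca)^{qa}$, i.e.\ of the form $(c'\log n)^{aq}$'' is exactly what would be needed, but it is not justified by the crude encoding and would require a genuinely more delicate F\"uredi--Koml\'os-type canonical-path argument. The paper avoids this entirely by choosing the moment order $k=2\lceil\log n/a\rceil$, so that $ka\approx 2\log n$: then the crude partition count $(ka)^{ka}\le(2\log n)^{ka}$ matches the threshold $(c\log n)^{ka}$ and Markov yields $(2/c)^{2\log n}=n^{-2\log(c/2)}$ (and the factor $2^a$ from splitting the symmetric $\wH$ into independent triangular parts only degrades this to $n^{-2\log(c/4)}$). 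Your hedge ``take $q$ a slightly smaller multiple of $\log n$ so that $qa\le 2\log n$'' points in the right direction but is not consistent with $q\approx\log n$ when $a$ is large; the correct choice is $q\approx\log n/a$, and this choice is forced by the Markov/shape-count trade-off, not merely by the moment-order restriction in Definition~\ref{defn:moment-condns} (that restriction you can indeed dodge via $|h_{ij}|<1$, as you note).

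Two smaller points. First, your cancellation step is stated with the inequality backwards: since every distinct edge is used at least twice, $E\le qa/2$, so you cannot write ``free labels $\le n^{qa/2}$ and use $E\ge qa/2$''; the correct bookkeeping is that the union of the $q$ walks anchored at $r$ is connected, hence has at most $E+1$ vertices, so the free labels contribute at most $n^{E}$, which cancels the $n^{-E}$ from the $E$ distinct entries exactly — this is a fixable slip, and it is the analogue of the paper's spanning-tree computation giving $n^{v_j}$ per component against $n^{-v}$. Second, your walk encoding on $[n]$ with unordered edges is an equivalent (arguably more standard) bookkeeping than the paper's device of treating index pairs as vertices of an auxiliary graph and splitting $\wH$ into triangular parts to restore independence; that difference is cosmetic and not a gap, provided you are explicit that symmetry forces you to count repetitions of \emph{unordered} pairs.
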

Next, we bound $\|H\|_2$ using matrix Bernstein inequality by \cite{tropp}; see Appendix~\ref{sec:techproof-error-decay-general} for a proof. 
\begin{lemma}\label{lem:cond_spec1}
Suppose $\Hmat$ satisfies Definition~\ref{defn:moment-condns}. 
Then, w.p. $\geq 1-1/n^{10+\log \alpha}$, we have: $\twonorm{\Hmat}\leq 3\sqrt{\alpha}.$ 
\end{lemma}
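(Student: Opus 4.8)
The plan is to establish Lemma~\ref{lem:cond_spec1} by writing $\Hmat = \sum_{i \leq j} \bZ_{ij}$ as a sum of independent, mean-zero, symmetric random matrices and applying the matrix Bernstein inequality of \cite{tropp}. Specifically, for $i < j$ define $\bZ_{ij} \defas h_{ij}(\e_i \e_j^\top + \e_j \e_i^\top)$ and for $i = j$ define $\bZ_{ii} \defas h_{ii}\e_i\e_i^\top$; then $\Hmat = \sum_{i \leq j} \bZ_{ij}$, the summands are independent by Definition~\ref{defn:moment-condns}, and $\expec{\bZ_{ij}} = 0$ since $\expec{h_{ij}} = 0$.

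First I would record the two quantities the matrix Bernstein inequality needs. For the uniform bound: since $|h_{ij}| < 1$, each $\bZ_{ij}$ has operator norm at most $1$ (the off-diagonal blocks $\e_i\e_j^\top + \e_j\e_i^\top$ have operator norm $1$), so $\max_{i \leq j}\twonorm{\bZ_{ij}} \leq 1 =: R$. For the variance: $\expec{\bZ_{ij}^2} = \expec{h_{ij}^2}(\e_i\e_i^\top + \e_j\e_j^\top)$ for $i<j$ (using $(\e_i\e_j^\top+\e_j\e_i^\top)^2 = \e_i\e_i^\top+\e_j\e_j^\top$), and $\expec{h_{ij}^2} \leq \tfrac{1}{n}$ from the $a=2$ case of the moment condition. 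Summing, $\twonorm{\sum_{i\leq j}\expec{\bZ_{ij}^2}}$ is a diagonal matrix whose $\ell$-th entry is $\sum_{j} \expec{h_{\ell j}^2} \leq n \cdot \tfrac1n = 1$, so the variance proxy $\sigma^2 := \twonorm{\sum \expec{\bZ_{ij}^2}} \leq 1$.

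Then I would invoke the Bernstein bound: $\prob{\twonorm{\Hmat} \geq t} \leq 2n \exp\!\left(\frac{-t^2/2}{\sigma^2 + Rt/3}\right) \leq 2n\exp\!\left(\frac{-t^2/2}{1 + t/3}\right)$. Setting $t = 3\sqrt{\alpha}$ for $\alpha > 1$ makes the exponent roughly $-\tfrac{9\alpha/2}{1+\sqrt{\alpha}} \leq -c'\sqrt\alpha\log$-free\ldots more carefully, for $t = 3\sqrt\alpha \geq 3$ we have $1 + t/3 \leq 2t/3$, so the exponent is at most $-\tfrac{t^2/2}{2t/3} = -\tfrac{3t}{4} = -\tfrac{9\sqrt\alpha}{4}$; combined with the prefactor $2n$, a short computation shows $2n\exp(-\tfrac94\sqrt\alpha) \leq n^{-10-\log\alpha}$ holds (the dominant term $\exp(-\tfrac94\sqrt\alpha)$ beats any poly-log factor in $\alpha$, and one absorbs the $2n$ into the exponent using $\sqrt\alpha \gtrsim 1$ and $n$ large; if the constant $3$ is too tight one simply takes a slightly larger constant, which is harmless since $C$ in Theorem~\ref{thm:main} is flexible).

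I do not expect a genuine obstacle here — this is a textbook application of matrix Bernstein. The only mild care-point is matching the failure probability $n^{-10-\log\alpha}$ exactly with the stated constant $3$; if the arithmetic is slightly off one either enlarges the constant in front of $\sqrt\alpha$ or notes that the $2n$ prefactor is dominated once $\alpha$ (or $n$) exceeds an absolute threshold. A secondary point worth stating cleanly is that Definition~\ref{defn:moment-condns} only guarantees $\expec{|h_{ij}|^a} \leq 1/n$ for $a \geq 2$, which is exactly what we use for the $a=2$ variance bound, and $|h_{ij}| < 1$ for the norm bound, so the hypotheses are used in full and nothing further is needed.
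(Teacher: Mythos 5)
Your argument is essentially the paper's own proof: the identical decomposition $\Hmat=\sum_{i\le j}\G_{ij}$ with $\G_{ij}=h_{ij}\left(\e_i\trans{\e_j}+\e_j\trans{\e_i}\right)$ off the diagonal and $h_{ii}\e_i\trans{\e_i}$ on it, the same uniform bound (the paper records $\twonorm{\G_{ij}}\le 2$; your value $1$ is the sharper correct one), the same variance bound $\twonorm{\sum_{i\le j}\expec{\G_{ij}^2}}\le 1$ from $\expec{h_{ij}^2}\le 1/n$, and then the matrix Bernstein inequality (Lemma~\ref{lem:matbern}); the paper's proof stops exactly there with ``the lemma now follows.'' The only place you go beyond the paper is the closing tail arithmetic, and that step does not hold as written: with $\sigma^2\le 1$ and $R\le 1$, Bernstein at $t=3\sqrt{\alpha}$ gives a failure probability of order $n\exp\left(-\tfrac{9}{4}\sqrt{\alpha}\right)$, which for a constant $\alpha>1$ is nowhere near $n^{-10-\log\alpha}$ (for $\alpha=2$, $n=10^{3}$ the bound even exceeds $1$), and enlarging the constant $3$ to any absolute constant does not repair it --- an exponent of the form $c\sqrt{\alpha}$ beats $(10+\log\alpha)\log n$ only when $\sqrt{\alpha}\gtrsim\log n$. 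This shortfall is inherited from the paper, whose proof asserts the same conclusion from Bernstein without doing the computation; whatever extra slack is needed has to come from properties of $\Hmat$ not recorded in Definition~\ref{defn:moment-condns} (e.g.\ the much smaller uniform bound $O(1/\sqrt{np})$ available in the setting of Lemma~\ref{lem:sat-defn}). So your reconstruction is faithful to the paper's argument, but you should present the final inequality as asserted (as the paper implicitly does) rather than as ``a short computation,'' since as stated it is false for constant $\alpha$.
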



\subsection{Technical Lemmas useful for Proof of Lemma~\ref{lem:error-decay-general}}\label{sec:tech-error-decay-general}
In this section, we present the technical lemmas used by our proof of Lemma~\ref{lem:error-decay-general}. 

First, we present the well known Weyl's perturbation inequality \cite{bhatia}: 
\begin{lemma}\label{lem:weyl-perturbation}
Suppose $\mB = \A + \Nmat$. Let $\lami[1],\cdots,\lami[n]$ and $\sk[1],\cdots,\sk[n]$ be the eigenvalues of
$\mB$ and $\A$ respectively. Then we have:
\begin{align*}
  \abs{\lami - \sk[i]} \leq \twonorm{\Nmat} \; \forall \; i \in [n].
\end{align*}
\end{lemma}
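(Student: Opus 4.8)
The plan is to derive the bound directly from the Courant--Fischer min--max characterization of the eigenvalues of a symmetric matrix, which is the standard route to Weyl's inequality (see \cite{bhatia}). Throughout, the eigenvalues are listed in decreasing order with multiplicity, as is the convention in the paper, and $\A,\mB,\Nmat$ are symmetric. Recall that for any symmetric $n\times n$ matrix $\mB$,
\begin{align*}
\lami[i] \;=\; \max_{\substack{S\subseteq\R^n\\ \dim S = i}}\ \min_{\substack{\x\in S\\ \twonorm{\x}=1}} \trans{\x}\mB\x,
\end{align*}
and the analogous formula holds for $\A$ with the $\sk[i]$.

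First I would fix $i$ and an arbitrary $i$-dimensional subspace $S\subseteq\R^n$. For every unit vector $\x\in S$ we have $\trans{\x}\mB\x = \trans{\x}\A\x + \trans{\x}\Nmat\x \le \trans{\x}\A\x + \twonorm{\Nmat}$, since $\abs{\trans{\x}\Nmat\x}\le\twonorm{\Nmat}$ for all unit $\x$. Taking the minimum over unit $\x\in S$ gives $\min_{\x\in S,\,\twonorm{\x}=1}\trans{\x}\mB\x \le \twonorm{\Nmat} + \min_{\x\in S,\,\twonorm{\x}=1}\trans{\x}\A\x$; then maximizing over all $i$-dimensional $S$ and applying the min--max formula to both sides yields $\lami[i] \le \sk[i] + \twonorm{\Nmat}$.

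Then I would run the same argument with the roles of $\A$ and $\mB$ interchanged: writing $\A = \mB + (-\Nmat)$ and noting $\twonorm{-\Nmat}=\twonorm{\Nmat}$ gives $\sk[i] \le \lami[i] + \twonorm{\Nmat}$. Combining the two one-sided bounds gives $\abs{\lami[i] - \sk[i]} \le \twonorm{\Nmat}$ for every $i\in[n]$, which is the claim. Since this is a classical textbook fact, I do not anticipate any real obstacle; the only points requiring a word of care are that the min--max formula presupposes the decreasing ordering of eigenvalues and that the matrices are symmetric, so that all eigenvalues are real and the characterization applies.
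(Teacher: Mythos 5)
Your proof is correct: the paper does not prove this lemma at all, stating it as the well-known Weyl perturbation inequality and citing Bhatia's \emph{Matrix Analysis}, and the Courant--Fischer min--max argument you give (applied once in each direction, using $\abs{\trans{\x}\Nmat\x}\le\twonorm{\Nmat}$) is precisely the standard proof found in that reference. Your remark that symmetry and the decreasing ordering are needed is also the right caveat, since the paper applies the lemma only to symmetric matrices after its dilation reduction.
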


The below given lemma bounds the $\linf$ norm of an appropriate incoherent matrix using its $\ltwo$ norm. 
\begin{lemma}\label{lem:linf-l2}
  Suppose $\M$ is a symmetric matrix with size $n$ and satisfying Assumption~\ref{assump:incoh}.
  For any symmetric matrix $\mB \in \R^{n \times n}$, we have:
\begin{align*}
  \infnorm{\M {\mB} \M - \M} \leq \frac{\mu^2 r}{n} \twonorm{\M \mB \M - \M}.
\end{align*}
\end{lemma}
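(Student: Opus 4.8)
\textbf{Proof proposal for Lemma~\ref{lem:linf-l2}.}
The plan is to observe that $\M\mB\M - \M$ has both its column space and its row space contained in the range of $\Us$, hence factors as $\Us\D\Ustr$ for a small $r\times r$ matrix $\D$, and then to pass from the operator norm to the $\ell_\infty$ norm using incoherence on the left and right factors $\e_i^T\Us$.

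First, write the EVD $\M = \Us\So\Ustr$ with $\Us \in \R^{n\times r}$, $\Ustr\Us = \eye_r$. Then
\begin{align*}
\M\mB\M - \M \;=\; \Us\bigl(\So\Ustr\mB\Us\So - \So\bigr)\Ustr \;=\; \Us\D\Ustr, \qquad \D \defas \So\Ustr\mB\Us\So - \So \in \R^{r\times r}.
\end{align*}
The key point here — and the one place one must be slightly careful — is that the subtracted term $\M$ itself factors through $\Us$, so nothing escapes the range of $\Us$. Since $\Us$ has orthonormal columns, conjugation by $\Us$ is an isometry for the operator norm, so $\twonorm{\D} = \twonorm{\Us\D\Ustr} = \twonorm{\M\mB\M - \M}$.

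Second, for any $i,j\in[n]$ we have $(\M\mB\M - \M)_{ij} = \e_i^T\Us\D\Ustr\e_j = (\e_i^T\Us)\,\D\,(\Ustr\e_j)$, so by submultiplicativity of the operator norm and incoherence (Assumption~\ref{assump:incoh}, with $n_1=n_2=n$ since we are in the symmetric reduction),
\begin{align*}
\abs{(\M\mB\M - \M)_{ij}} \;\leq\; \norm{\e_i^T\Us}_2\,\twonorm{\D}\,\norm{\e_j^T\Us}_2 \;\leq\; \frac{\mu\sqrt r}{\sqrt n}\cdot\twonorm{\D}\cdot\frac{\mu\sqrt r}{\sqrt n} \;=\; \frac{\mu^2 r}{n}\,\twonorm{\D}.
\end{align*}
Taking the maximum over $i,j$ gives $\infnorm{\M\mB\M - \M} \leq \frac{\mu^2 r}{n}\twonorm{\D}$, and substituting $\twonorm{\D} = \twonorm{\M\mB\M-\M}$ finishes the proof. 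There is no real obstacle: the whole content is the rank-$r$ factorization of $\M\mB\M-\M$ through $\Us$, after which the bound is two applications of incoherence plus the isometry property of $\Us$.
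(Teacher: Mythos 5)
Your proposal is correct and follows essentially the same route as the paper's proof: factor $\M\mB\M-\M$ through $\Us$ as $\Us(\So\Ustr\mB\Us\So-\So)\Ustr$, bound each entry by $\norm{\e_i^T\Us}_2\twonorm{\So\Ustr\mB\Us\So-\So}\norm{\e_j^T\Us}_2$ via incoherence, and use that conjugation by the orthonormal $\Us$ preserves the operator norm. No gaps.
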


Next, we present a natural perturbation lemma that bounds the spectral norm distance of $A$ to $AB^{-1}A$ where $B=P_k(A+E)$ and $E$ is a perturbation to $A$. 
\begin{lemma}\label{lem:cond_spec}
Let $\A \in \R^{n\times n}$ be a symmetric matrix with eigenvalues $\beta_1, \cdots, \beta_n$,
where $|\beta_1|\geq \cdots \geq |\beta_n|$. Let $\W=\A+\mE$ be a perturbation
of $\A$, where $\mE$ is a symmetric matrix with $\twonorm{\mE} < \frac{\abs{\beta_k}}{2}$. Also, let $\Pk(\W)=\U \Lo \trans{\U}$
be the eigenvalue decomposition of the best rank-$k$ approximation of $\W$.
Then, $\Lo^{-1}$ exists. Furthermore, we have:
\begin{align*}
\twonorm{\A-\A \U \Lo^{-1} \trans{\U} \A} &\leq \abs{\beta_{k+1}} + 5\twonorm{\mE}, \mbox{ and }\\
\twonorm{\A \U \Lo^{-a}\trans{\U}\A} &\leq 4\left(\frac{\abs{\beta_k}}{2}\right)^{-a+2} \; \forall \; a \geq 2.
\end{align*}
\end{lemma}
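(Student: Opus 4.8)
The plan is to write everything in terms of the eigenvector equations for the best rank-$k$ approximation of $\W$ and then perturb around $\A$. First I would dispose of the existence of $\Lo^{-1}$: since $\twonorm{\mE} < \abs{\beta_k}/2$, Weyl's inequality (Lemma~\ref{lem:weyl-perturbation}) gives $\abs{\lami[i](\W)} \geq \abs{\beta_k} - \twonorm{\mE} > \abs{\beta_k}/2 > 0$ for all $i \leq k$, so the top-$k$ eigenvalues of $\W$ (which are exactly the diagonal entries of $\Lo$) are bounded away from zero in absolute value, and similarly $\abs{\lami[k+1](\W)} \leq \abs{\beta_{k+1}} + \twonorm{\mE} < \abs{\beta_k}/2$, so the eigen-gap at level $k$ is genuine and $\Pk(\W)$ is well-defined. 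The bound $\twonorm{\Lo^{-1}} \leq (\abs{\beta_k}/2)^{-1}$ is then immediate, which already yields the second claim: $\twonorm{\A \U \Lo^{-a} \Utr \A} \leq \twonorm{\A\U}^2 \twonorm{\Lo^{-1}}^a$, and since $\A\U = \W\U - \mE\U = \U\Lo - \mE\U$ we get $\twonorm{\A\U} \leq \twonorm{\Lo} + \twonorm{\mE} \leq (\abs{\beta_1} + \twonorm{\mE}) \leq 2\abs{\beta_1}$... — but that is the wrong scale. The right way: $\twonorm{\A\U\Lo^{-a}\Utr\A} = \twonorm{(\A\U\Lo^{-1})\Lo^{-a+2}(\Lo^{-1}\Utr\A)}$, and $\A\U\Lo^{-1} = \U + (\A - \W)\U\Lo^{-1} = \U - \mE\U\Lo^{-1}$, which has norm at most $1 + \twonorm{\mE}\cdot(\abs{\beta_k}/2)^{-1} \leq 1 + 1 = 2$; squaring gives the factor $4$, and $\twonorm{\Lo^{-a+2}} \leq (\abs{\beta_k}/2)^{-a+2}$ handles the rest, proving the second inequality.

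For the first inequality I would proceed as follows. Write $\Pk(\W) = \U\Lo\Utr$ and note $\Pk(\W) = \W - (\W - \Pk(\W))$, where $\W - \Pk(\W)$ is exactly the tail of $\W$, with spectral norm $\abs{\lami[k+1](\W)} \leq \abs{\beta_{k+1}} + \twonorm{\mE}$. The quantity to control is $\A - \A\U\Lo^{-1}\Utr\A$. Using $\A = \W - \mE$ and $\A\U\Lo^{-1}\Utr\A = (\W - \mE)\U\Lo^{-1}\Utr(\W - \mE)$, and $\W\U = \U\Lo$ (so $\W\U\Lo^{-1}\Utr\W = \U\Lo\Utr = \Pk(\W)$), I would expand:
\begin{align*}
\A\U\Lo^{-1}\Utr\A = \Pk(\W) - \mE\U\Utr\W - \W\U\Utr\mE + \mE\U\Lo^{-1}\Utr\mE,
\end{align*}
using $\W\U\Lo^{-1} = \U$ on the cross terms. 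Hence
\begin{align*}
\A - \A\U\Lo^{-1}\Utr\A = (\W - \Pk(\W)) - \mE + \mE\U\Utr\W + \W\U\Utr\mE - \mE\U\Lo^{-1}\Utr\mE.
\end{align*}
Now I bound term by term in spectral norm: $\twonorm{\W - \Pk(\W)} \leq \abs{\beta_{k+1}} + \twonorm{\mE}$; $\twonorm{\mE}$ for the second; $\twonorm{\mE\U\Utr\W} \leq \twonorm{\mE}\twonorm{\U\Utr\W}$, and since $\U\Utr\W = \Pk(\W)$ has norm $\leq \abs{\beta_1} + \twonorm{\mE}$ — again the wrong scale, so instead I note $\U\Utr\W = \W\U\Utr$ is Hermitian with the same top-$k$ eigenvalues of $\W$, and bound $\twonorm{\mE\U\Utr\W} \leq \twonorm{\mE}\cdot\twonorm{\Pk(\W)}$. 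The issue is that $\twonorm{\Pk(\W)}$ can be as large as $\abs{\beta_1}$, which would give $\abs{\beta_1}\twonorm{\mE}$, not $5\twonorm{\mE}$.

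The resolution — and this is the step I expect to be the main obstacle — is to route the cross terms through $\U$-projected quantities more carefully: write $\mE\U\Utr\W = \mE\U\Utr\W\U\Utr = \mE\U\Lo\Utr$, so $\mE(\A\U\Lo^{-1})\Lo\Utr$ won't help either. The correct trick is to keep $\A\U$ rather than $\W\U$: from $\A\U = \U\Lo - \mE\U$ we have $\A\U\Lo^{-1}\Utr\A = (\U - \mE\U\Lo^{-1})\Utr\A = \U\Utr\A - \mE\U\Lo^{-1}\Utr\A$, and then $\U\Utr\A = \U\Utr\W - \U\Utr\mE = \Pk(\W) - \U\Utr\mE$. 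So
\begin{align*}
\A - \A\U\Lo^{-1}\Utr\A = \A - \Pk(\W) + \U\Utr\mE + \mE\U\Lo^{-1}\Utr\A.
\end{align*}
Here $\twonorm{\A - \Pk(\W)} \leq \twonorm{\A - \W} + \twonorm{\W - \Pk(\W)} \leq \twonorm{\mE} + \abs{\beta_{k+1}} + \twonorm{\mE}$; $\twonorm{\U\Utr\mE} \leq \twonorm{\mE}$; and for the last term $\twonorm{\mE\U\Lo^{-1}\Utr\A} \leq \twonorm{\mE}\cdot\twonorm{\U\Lo^{-1}\Utr\A}$, where $\U\Lo^{-1}\Utr\A = (\U\Lo^{-1}\Utr)(\W\U\Lo^{-1}\Utr)^{-1}$... — cleanly, $\Lo^{-1}\Utr\A = \Lo^{-1}\Utr\W - \Lo^{-1}\Utr\mE = \Utr - \Lo^{-1}\Utr\mE$, so $\twonorm{\U\Lo^{-1}\Utr\A} \leq 1 + \twonorm{\Lo^{-1}}\twonorm{\mE} \leq 1 + (\abs{\beta_k}/2)^{-1}\cdot(\abs{\beta_k}/2) = 2$. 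Collecting: $\twonorm{\A - \A\U\Lo^{-1}\Utr\A} \leq \abs{\beta_{k+1}} + 2\twonorm{\mE} + \twonorm{\mE} + 2\twonorm{\mE} = \abs{\beta_{k+1}} + 5\twonorm{\mE}$, as claimed. The only genuinely delicate point is making sure every cross term is written so that a $\Lo^{-1}$ is adjacent to a factor of norm $O(\abs{\beta_k})$ (either $\mE$, which is small by hypothesis, or $\Lo$ itself), never adjacent to a raw $\A$ or $\W$ of norm $\abs{\beta_1}$; the identities $\A\U\Lo^{-1} = \U - \mE\U\Lo^{-1}$ and $\Lo^{-1}\Utr\A = \Utr - \Lo^{-1}\Utr\mE$ are exactly what make this work, and I would state them as a preliminary computation before the two bounds.
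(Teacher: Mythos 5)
Your final argument is correct and follows essentially the same route as the paper: Weyl's inequality gives $\abs{\lambda_k}\geq\abs{\beta_k}/2$ so $\Lo^{-1}$ exists with $\twonorm{\Lo^{-1}}\leq 2/\abs{\beta_k}$, and both bounds come from expanding $\A=\W-\mE$ against the eigendecomposition of $\W$ via the invariance identities $\W\U=\U\Lo$ and $\Lo^{-1}\Utr\W=\Utr$, yielding the same constants. One small remark: the ``main obstacle'' you describe is illusory --- in the direct expansion the cross terms are $\W\U\Lo^{-1}\Utr\mE=\U\Utr\mE$ and $\mE\U\Lo^{-1}\Utr\W=\mE\U\Utr$ (your discarded intermediate display misapplied the identity and left a spurious $\W$), so they are already of size $\twonorm{\mE}$ and the one-shot expansion, which is exactly what the paper does, goes through without the detour.
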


\subsection{Detailed Proof of Lemma~\ref{lem:error-decay-general}}\label{sec:mainproof-error-decay-general}
We are now ready to present a proof of Lemma~\ref{lem:error-decay-general}. Recall that $\Xplus=P_k(\M+\beta \Hmat)$, hence, 
\begin{equation}(\M + \beta \Hmat)\u_i = \lambda_i \u_i,\ \forall 1\leq i\leq k,\label{eq:svp_eig_update}\end{equation}
where $(\u_i,\lambda_i)$ is the $i^{\textrm{th}} \, (i \leq k)$ top eigenvector-eigenvalue pair (in terms of magnitude).

Now, as $\Hmat$ satisfies conditions of Definition~\ref{defn:moment-condns}, we can apply Lemma~\ref{lem:cond_spec1} to obtain: 
\begin{align}\label{eqn:perturb-main-new-app}
 \abs{\beta} \twonorm{\Hmat} \leq \abs{\beta} \cdot 3 \sqrt{\alpha} \leq \frac{\abs{\sk}}{5}.
\end{align}

Using Lemma~\ref{lem:weyl-perturbation} and~\eqref{eqn:perturb-main-new-app}, we have: 
\begin{align}\label{eq:tmp_pert}
\abs{\lambda_i} \geq \abs{\so_i} - \abs{\beta} \twonorm{\Hmat} \geq \frac{4 \abs{\so_k}}{5} \; \forall \; i \in [k].
\end{align}

Using \eqref{eq:svp_eig_update}, we have: $\left(\eye - \frac{\beta}{\lambda_i} \Hmat \right) \u_i = \frac{1}{\lambda_i} \M \u_i$. Moreover, using \eqref{eq:tmp_pert}, $\eye - \frac{\beta}{\lambda_i} \Hmat$ is invertible. Hence, using Taylor series expansion, we have: 
\begin{align*}
  \u_i = \frac{1}{\lambda_i}\left(\eye + \frac{\beta}{\lambda_i} \Hmat + \frac{\beta^2}{\lambda_i^2} \left(\Hmat\right)^2 + \cdots \right) \M  \u_i.
\end{align*}
Letting $\U \Lo \Utr$ denote the eigenvalue decomposition (EVD) of $\Xplus$, we obtain:
\begin{align*}
\Xplus = \U \Lo \Utr = \sum_{{a,b \geq 0}} \beta^{a+b}{\left(\Hmat\right)^a \M \U {\Lambda^{-(a+b+1)}} \Utr {\M} \left(\Hmat\right)^b}.
\end{align*}
Using triangle inequality, we have:
\begin{align}
  \infnorm{\Xplus - \M}
	  &\leq \infnorm{\M \U \inv{\Lo} \Utr { \M } - \M}
	  + \sum_{\stackrel{a,b \geq 0}{a+b \geq 1}} \abs{\beta}^{a+b} \infnorm{\left(\Hmat \right)^a \M \U {\Lo^{-(a+b+1)}} \Utr \trans{\M} \left(\Hmat \right)^b}.
\label{eqn:update-error-main-app}
\end{align}
Using Lemma~\ref{lem:linf-l2}, we have the following bound for the first term above:
\begin{align}
  \infnorm{\M \U \inv{\Lo} \Utr {\M} - \M}
	&\leq \frac{\muzero^2 r}{n} \twonorm{\M \U \inv{\Lo} \Utr \M - \M }. \label{eqn:linf-ltwo-app}
\end{align}
Furthermore, using Lemma~\ref{lem:cond_spec} we have: 
\begin{align}
  \twonorm{\M \U \inv{\Lo} \Utr \M  - \M} &\leq \abs{\sk[k+1]} + 5 \abs{\beta} \twonorm{\Hmat }, \mbox{ and } \label{eqn:laminvspectral-app} \\
  \twonorm{\M \U \Lo^{-a} \Utr \M} &\leq 4\left(\frac{\abs{\sk}}{2}\right)^{-a+2} \; \forall \; a \geq 2. \label{eqn:laminv-a-spectral-app}
\end{align}
Plugging \eqref{eqn:laminvspectral-app} into \eqref{eqn:linf-ltwo-app} gives us:
\begin{align}\label{eqn:update-error1final-app}
  \infnorm{\M \U \inv{\Lo} \Utr {\M} - \M} &\leq \frac{\muzero^2 r}{n} \left(\abs{\sk[k+1]} + 5 \abs{\beta} \twonorm{\Hmat}\right).
\end{align}

Let $\M = \Uo \So \Uotr$ denote the EVD of $\M$. We now bound the terms in the summation in \eqref{eqn:update-error-main-app} for $1 \leq a+b < \log n$.
\begin{align}
&\abs{\beta}^{a+b}\infnorm{\left(\Hmat\right)^a \M \U {\Lo^{-(a+b+1)}} \Utr {\M} \left(\Hmat\right)^b} \nonumber \\
	&= \abs{\beta}^{a+b} \max_{i,j} \trans{\e_i} \left(\Hmat\right)^a \M \U {\Lo^{-(a+b+1)}} \Utr {\M} \left(\Hmat\right)^b \e_j \nonumber \\
	&\leq \abs{\beta}^{a+b} \left(\max_{i} \trans{\e_i} \left(\Hmat\right)^a \Uo \right) \twonorm{\So \Uotr \U {\Lo^{-(a+b+1)}} \Utr {\Uo \So}} \left(\max_j \Uotr \left(\Hmat\right)^b \e_j\right) \nonumber \\
	&\leq \abs{\beta}^{a+b} \left(\sqrt{r}\max_{i} \infnorm{\left(\Hmat\right)^a \uoi}\right) \twonorm{\M \U {\Lo^{-(a+b+1)}} \Utr {\M}} \left(\sqrt{r}\max_j \infnorm{ \left(\Hmat\right)^b \uoi[j]}\right) \nonumber \\
	&\stackrel{(\zeta_1)}{\leq} \frac{\muzero^2r^2}{n} \abs{\beta}^{a+b} \left(10 \sqrt{\alpha} \log n\right)^{a+b} \twonorm{\M \U {\Lo^{-(a+b+1)}} \Utr {\M}}\nonumber\\
	&\stackrel{(\zeta_2)}{\leq} \frac{\muzero^2r^2}{n} \abs{\beta}^{a+b} \left(10 \sqrt{\alpha} \log n\right)^{a+b} \cdot 4 \left(\frac{2}{ \abs{\sk}}\right)^{a+b-1} \nonumber\\
	&\leq \frac{\muzero^2r^2}{n} \left(\frac{80 \abs{\beta} \sqrt{\alpha} \log n }{\abs{\sk}}\right)^{a+b-1} \left(10 \abs{\beta} \sqrt{\alpha}\log n\right)
	\leq \frac{\muzero^2r^2}{n} \left(\frac{1}{20}\right)^{a+b-1} \cdot 10 \abs{\beta} \sqrt{\alpha}\log n, \label{eqn:Htpq-error1-app}
\end{align}
where $(\zeta_1)$ follows from Lemma~\ref{lem:Htp-spectralnorm_app} and $(\zeta_2)$ follows from~\eqref{eqn:laminv-a-spectral-app}.

For $a+b \geq \log n$, we have
\begin{align}
\abs{\beta}^{a+b}\infnorm{\left(\Hmat\right)^a \M \U {\Lo^{-(a+b+1)}} \Utr { \M } \left(\Hmat\right)^b}
	&\leq \abs{\beta}^{a+b}\twonorm{\left(\Hmat\right)^a \M \U {\Lo^{-(a+b+1)}} \Utr {\M} \left(\Hmat\right)^b} \nonumber \\
	&\leq \abs{\beta}^{a+b} \twonorm{\Hmat}^a \twonorm{ \M \U {\Lo^{-(a+b+1)}} \Utr { \M }} \twonorm{\Hmat}^b \nonumber \\
	&\leq \abs{\beta}^{a+b} \twonorm{\Hmat}^{a+b} \left(\frac{5}{4\abs{\sk}}\right)^{a+b-1} \nonumber \\
	&\leq \left(\frac{15 \abs{\beta} \sqrt{\alpha}}{4\abs{\sk}}\right)^{a+b-1} \cdot 3 \abs{\beta} \sqrt{\alpha} \nonumber \\
	&\leq \frac{\muzero^2 r^2}{n} \left(\frac{1}{20}\right)^{a+b-1} \left(10 \abs{\beta} \sqrt{\alpha} \log n\right),\label{eqn:Htpq-error2-app}
\end{align}
where we used Lemma~\ref{lem:cond_spec} to bound $\twonorm{\M \U {\Lo^{-(a+b+1)}} \Utr { \M }}$
and Lemma~\ref{lem:cond_spec1} to bound $\twonorm{\Hmat}$. The last inequality follows from using $(1/2)^{a+b}\leq 1/n\leq \frac{\mu^2 r^2}{n}$ as $a+b>\log n$. 

Plugging \eqref{eqn:update-error1final-app}, \eqref{eqn:Htpq-error1-app} and \eqref{eqn:Htpq-error2-app}
in \eqref{eqn:update-error-main-app} gives us:
\begin{align*}
  \infnorm{\Xplus - \M} &\leq \frac{\muzero^2 r}{n} \left(\abs{\sk[k+1]} + 5 \abs{\beta} \twonorm{\Hmat} \right)
	+ \frac{\muzero^2 r^2}{n} \sum_{\stackrel{a,b \geq 0}{a+b \geq 1}} \left(\frac{1}{20}\right)^{a+b} (10 \abs{\beta} \sqrt{\alpha} \log n) \\
	&\leq \frac{\muzero^2 r^2}{n} \left(\abs{\sk[k+1]} + 15 \abs{\beta} \sqrt{\alpha} \log n \right).
\end{align*}
%
This proves the lemma.

\subsection{Proofs of Technical Lemmas from Section~\ref{sec:Hmat}, Section~\ref{sec:tech-error-decay-general}}\label{sec:techproof-error-decay-general}
\begin{proof}[Proof of Lemma~\ref{lem:sat-defn}]
Since $(\PoA)_{ij}$ is an unbiased estimate of $\A_{ij}$, we see that $\expec{\mB_{ij}} = 0$.
For $k\geq 2$, we have:
\begin{align*}
\expec{\abs{\mB_{ij}}^k} = \left(\frac{\sqrt{p} \A_{ij}}{2 \sqrt{n} \infnorm{\A}}\right)^k \left( p \left(\frac{1}{p}-1\right)^k + (1-p) \right)
	\leq \left(\frac{p}{2n}\right)^{\frac{k}{2}} \; \cdot  \; \frac{2}{p^{k-1}}
	\leq \frac{1}{n\left(np\right)^{\frac{k}{2}-1}} \leq \frac{1}{n}.
\end{align*}
\end{proof}
\begin{proof}[Proof of Lemma~\ref{lem:cond_spec1}]

  Note that, $\Hmat=\sum_{ij} h_{ij} \e_i \trans{\e_j}=\sum_{i\leq j} \G_{ij}$ where $\G_{ij}=h_{ij} \frac{\ind{i\neq j}+1}{2}
\left(\e_i \trans{\e_j}+\e_j \trans{\e_i} \right)$. Now, $\expec{\G_{ij}}=0$, $\max_{ij}\twonorm{\G_{ij}}=2$, and,
$$\twonorm{\expec{\G_{ij}\trans{\G}_{ij}}} = \twonorm{\expec{\sum_{ij}h_{ij}^2 \e_i \trans{\e_i}}}
= \max_i \sum_j \expec{h_{ij}^2} \leq 1.$$
The lemma now follows using matrix Bernstein inequality (Lemma~\ref{lem:matbern}).
\end{proof}

\begin{proof}[Proof of Lemma~\ref{lem:linf-l2}]
  Let $\M = \Us \So \Ustr$ be the eigenvalue decomposition  $\M$. We have:
\begin{align*}
  \infnorm{\M {\mB} \M - \M} &= \max_{i,j} \trans{\ei} \left(\M {\mB} \M - \M\right) \ej \\
	&= \max_{i,j} \trans{\ei} \left(\Us \So \Utr {\mB} \Us \So \Ustr - \Us \So \Ustr \right) \ej \\
	&\leq \left(\max_{i} \twonorm{\trans{\ei} \Us}\right) \twonorm{\So \Ustr {\mB} \Us \So - \So} \left(\max_j \Ustr \ej\right) \\
	&\stackrel{(\zeta_1)}{\leq} \frac{\mu^2 r}{n} \twonorm{\Us \left(\So \Ustr {\mB} \Us \So - \So \right) \Ustr}
	= \frac{\mu^2 r}{n} \twonorm{\M {\mB} \M - \M},
\end{align*}
where $(\zeta_1)$ follows from the incoherence of $\M$.
\end{proof}

\begin{proof}[Proof of Lemma~\ref{lem:cond_spec}]
Let $\W=\U \Lo \trans{\U} + \widetilde{\U} \widetilde{\Lo} \trans{\widetilde{\U}}$ be the eigenvalue decomposition of
$\W$. Since $\Pk(\W) = \U \Lo \trans{\U}$, we see that $\abs{\lambda_k} \geq \abs{\ltili}$.

From Lemma~\ref{lem:weyl-perturbation}, we have:
\begin{equation}
  \label{eq:cond_pert1}
  |\lambda_i-\beta_i|\leq \twonorm{\mE},\ \forall \ i \in [k],\ \ \ \text{and},\ \ \  \abs{\ltili - \beta_{k+i}}\leq \twonorm{\mE},
\ \forall \ i\in [n-k].
\end{equation}
Since $\twonorm{\mE} \leq \frac{\beta_k}{2}$, we see that
\begin{align}\label{eqn:lamkbound}
\abs{\lambda_k} \geq \abs{\beta_k}/2>0.
\end{align}
Hence, we conclude that $\Lo \in \R^{k\times k}$ is invertible proving the first claim of the lemma.

Using the eigenvalue decomposition of $\W$, we have the following expansion: 
\begin{align}
  \A \U \inv{\Lo} \Utr \A  - \A
	&= \left(\U \Lo \Utr + \Util \widetilde{\Lo} \Utiltr - \mE \right) \U \inv{\Lo} \Utr \left(\U \Lo \Utr + \Util \widetilde{\Lo} \Utiltr - \mE \right) - \A \nonumber\\
	&= \U \Lo \Utr - \U \Utr \mE - \mE \U \Utr + \mE \U \inv{\Lo} \Utr \mE  - \U \Lo \Utr - \Util \widetilde{\Lo} \Utiltr + \mE \nonumber\\
	&= - \U \Utr \mE - \mE \U \Utr + \mE \U \inv{\Lo} \Utr \mE- \Util \widetilde{\Lo} \Utiltr + \mE. \label{eq:cond_spec_3}
\end{align}
Applying triangle inequality and using $\twonorm{\mB\C} \leq \twonorm{\mB} \twonorm{\C}$, we get: 
\begin{align*}
  \twonorm{\A-\A \U\Lo^{-1} \Utr \trans{\A} } \leq 3\twonorm{\mE} + \frac{\twonorm{\mE}^2}{|\lambda_k|} + \abs{\ltili[1]}.
\end{align*}
Using the above inequality with \eqref{eqn:lamkbound}, we obtain: 
\begin{align*}
  \twonorm{\A-\A \U\Lo^{-1} \Utr \trans{\A}} \leq \abs{\beta_{k+1}} + 5\twonorm{\mE}.
\end{align*}
This proves the second claim of the lemma.

Now, similar to \eqref{eq:cond_spec_3}, we have: 
\begin{align*}
  \A \U \Lo^{-a} \Utr \A
	&= \left(\U \Lo \Utr + \Util \widetilde{\Lo} \Utiltr - \mE\right) \U  \Lo^{-a} \Utr \left(\U \Lo \Utr + \Util \widetilde{\Lo} \Utiltr - \mE \right) \nonumber\\
	&= \U \Lo^{-a+2} \Utr - \U \Lo^{-a+1}\Utr \mE - \mE \U \Lo^{-a+1}\Utr + \mE \U \Lo^{-a} \Utr \mE.
\end{align*}
The last claim of the lemma follows by using triangle inequality and~\eqref{eqn:lamkbound} in the above equation. 
\end{proof}

\section{Proof of Lemma~\ref{lem:error-decay-samesamples}}
We now present a proof of Lemma~\ref{lem:error-decay-samesamples} that show decrease in the Frobenius norm of the error matrix, despite using same samples in each iteration. In order to state our proof, we will first introduce certain notations and provide a few perturbation results that might be of independent interest. Then, in next subsection, we will present a detailed proof of Lemma~\ref{lem:error-decay-samesamples}. Finally, in Section~\ref{sec:techproof-error-decay-samesamples}, we present proofs of the technical lemmas given below. 

\subsection{Notations and Technical Lemmas}\label{sec:tech-error-decay-samesamples}
Recall that we assume (wlog) that $\M\in \R^{n\times n}$ is symmetric and $\M=\Us\So\Ustr$ is the eigenvalue decomposition (EVD) of $\M$.

In order to state our first supporting lemma, we will introduce the concept of tangent spaces of matrices \cite{bhatia}.
\begin{definition}\label{defn:tangentspace}
Let $\A$ be a matrix with EVD (eigenvalue decomposition) $\Us \So \Ustr$. The following space of matrices is called the tangent space of $\A$:
\begin{align*}
\cT(\A) \defas \set{\Us \Lo_0 \Ustr + \Us \Lo_1 \Utr + \U \Lo_2 \Ustr},
\end{align*}
where $\U\in \R^{n\times n}, \U^T\U=I$, and $\Lo_0, \Lo_1, \Lo_2$ are all diagonal matrices.
\end{definition}
That is, if $\A = \Us \So \Ustr$ is the EVD of $\A$, then any matrix $\mB$ can be decomposed into four mutually orthogonal terms as
\begin{align}
\mB = \Us \Ustr \mB \Us \Ustr + \Us \Ustr \mB \Usperp \Usperptr + \Usperp \Usperptr \mB \Us \Ustr
	+ \Usperp \Usperptr \mB \Usperp \Usperptr,
\label{eqn:matrix-orth-decomp}
\end{align}
where $\Usperp$ is a basis of the orthogonal space of $\Us$. The first three terms above are in $\cT(\A)$ and the last term is in ${\cT(\A)}^{\perp}$. We let $\cP_{\cT(\A)}$ and $\cP_{\cT(\A)^{\perp}}$ denote the projection operators onto $\cT(\A)$ and $\cT(\A)^{\perp}$ respectively.
\begin{lemma}\label{lem:perturbation_perpspace}
Let $\A$ and $\mB$ be two symmetric matrices. Suppose further that $\mB$ is rank-$k$. Then, we have:
\begin{align*}
\frob{\cP_{\cT(\A)^{\perp}} \left( \mB \right)} \leq \frac{\frob{\A-\mB}^2}{{\sigma_k(\mB)}}.
\end{align*}
\end{lemma}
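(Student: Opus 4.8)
The plan is to exploit the fact that $\mB$ is exactly rank-$k$, so that $\cP_{\cT(\A)^\perp}(\mB) = \Usperp\Usperptr\,\mB\,\Usperp\Usperptr$ measures precisely how much the range of $\mB$ fails to lie in the range of $\A$. The key observation is that, because $\mB$ is rank-$k$ and symmetric, $\mB = \mB\,Q\,\mB / $ (something) is false in general, but we \emph{do} have a clean identity through the pseudo-inverse: writing $\mB = \V\Lo\Vtr$ for the EVD of $\mB$ (with $\Lo$ invertible $k\times k$), we have $\mB\,\mB^{D}\,\mB = \mB$ where $\mB^D = \V\Lo^{-1}\Vtr$, and more to the point the projector onto $\text{range}(\mB)$ equals $\mB\,\mB^D$. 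I would first reduce the quantity $\frob{\Usperp\Usperptr\mB\Usperp\Usperptr}$ to $\frob{\Usperp\Usperptr\mB}$ (one-sided), since $\Usperptr\mB\Us = $ a term we can also control; in fact $\frob{\cP_{\cT(\A)^\perp}(\mB)} = \frob{\Usperptr\mB\Usperp}_F \le \frob{\Usperptr\mB}_F$, and symmetrically, so it suffices to bound one factor and then argue the two-sided version picks up the square.

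Concretely, here is the chain I would run. Let $\Delta \defas \A - \mB$. Then $\Usperptr\,\mB = \Usperptr(\A - \Delta) = \Usperptr\A - \Usperptr\Delta = -\Usperptr\Delta$, because $\Usperptr\A = \Usperptr\Us\So\Ustr = 0$. Hence $\frob{\Usperptr\mB\Usperp} \le \frob{\Usperptr\Delta}$, which is \emph{linear} in $\frob{\Delta}$, not quadratic — so this naive bound is too weak, and the quadratic gain must come from using rank-$k$-ness of $\mB$ once more on the \emph{other} side. The trick: since $\mB$ has rank $k$, its range is spanned by its top-$k$ eigenvectors $\V$, so $\mB = \mB\,\V\Vtr$ and also $\V\Vtr = \mB\,\Lo^{-1}\Vtr$ (abusing notation, $\V\Vtr\mB = \mB$ and $\V\Vtr = \mB^{D}\mB$ is the range projector). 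Therefore
\begin{align*}
\Usperptr\mB\Usperp = \Usperptr\mB\,\V\Vtr\,\Usperp = \Usperptr\mB\,\big(\mB^{D}\mB\big)\,\Usperp = \big(\Usperptr\mB\big)\,\mB^{D}\,\big(\mB\Usperp\big) = \big(\Usperptr\Delta\big)\,\mB^{D}\,\big(\Delta^\top\Usperp\big),
\end{align*}
using $\Usperptr\mB = -\Usperptr\Delta$ and, by symmetry of $\mB$ and of $\A$, $\mB\Usperp = -\Delta^\top\Usperp = -\Delta\Usperp$. Now take Frobenius norms: $\frob{\Usperptr\mB\Usperp} \le \twonorm{\mB^D}\,\frob{\Usperptr\Delta}\,\twonorm{\Delta\Usperp} \le \frac{1}{\sigma_k(\mB)}\frob{\Delta}\,\frob{\Delta} = \frac{\frob{\A-\mB}^2}{\sigma_k(\mB)}$, where I used $\twonorm{\mB^D} = 1/\sigma_k(\mB)$ and bounded the operator norm of $\Delta\Usperp$ by $\frob{\Delta}$ (and the remaining factor likewise).

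The main obstacle — and the step I'd be most careful about — is justifying the middle identity $\mB = \mB\,\V\Vtr = \mB\,(\mB^D\mB)$ and making sure the insertion of the range-projector is valid on \emph{both} sides simultaneously, i.e. that $\cP_{\cT(\A)^\perp}(\mB) = \Usperp\Usperptr\mB\Usperp\Usperptr$ really equals $\Usperp(\Usperptr\mB\Usperp)\Usperptr$ with the inner block admitting the factorization above; this is where symmetry of both $\A$ and $\mB$ is essential so that the left factor $\Usperptr\mB$ and the right factor $\mB\Usperp$ are honest transposes of each other. A secondary point is the norm bookkeeping: I want one $\frob{\cdot}$ and one $\twonorm{\cdot}$ rather than two Frobenius norms, using $\twonorm{XY}\le \twonorm{X}\frob{Y}$ type inequalities and $\twonorm{\cdot}\le\frob{\cdot}$, to land exactly at $\frob{\A-\mB}^2/\sigma_k(\mB)$ with constant $1$. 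Everything else is routine linear algebra.
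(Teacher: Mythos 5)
Your argument is correct and is essentially the paper's own proof in different notation: writing $\Usperptr\mB\Usperp=(\Usperptr\mB)\,\mB^{D}\,(\mB\Usperp)$ with $\mB^{D}=\V\inv{\Lo}\Vtr$ is exactly the paper's insertion of $\Lo\inv{\Lo}\Lo$ into $\Usperptr\V\Lo\Vtr\Usperp$, and both proofs then use $\Usperptr\A=0$ to replace $\Usperptr\mB$ by $-\Usperptr(\A-\mB)$ and bound $\twonorm{\mB^{D}}=1/\sigma_k(\mB)$. The steps you flagged as needing care (the range-projector insertion and the mixed Frobenius/operator-norm bookkeeping) go through exactly as you describe.
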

Next, we present a few technical lemmas related to norm of $M-P_{\Omega}(M)$:
\begin{lemma}\label{lem:bound_tgt1}
Let $M$, $\Omega$ be as given in Lemma~\ref{lem:error-decay-samesamples} and let $p=|\Omega|/n^2$ be the sampling probability. Then, 
For every $r \times r$ matrix $\Sh$, we have $(w.p. \geq 1-n^{-10-\alpha})$:
\begin{align*}
\frob{\left(\Us \Sh \Ustr - \frac{1}{p}\Om{\Us \Sh \Ustr}\right) \Us} \leq \frac{1}{40} \frob{\Sh}.
\end{align*}
\end{lemma}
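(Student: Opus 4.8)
The plan is to pull $\Sh$ out of the expression so that the statement becomes a bound, uniform over $\Sh$, on the spectral norm of a sampled covariance of the rows of $\Us$. Write $A\defas\Us\Sh\Ustr$, let $\delta_{ij}\in\{0,1\}$ be the indicator of $(i,j)\in\Omega$, so that $\bigl(A-\tfrac1p\Om{A}\bigr)_{ij}=(1-\delta_{ij}/p)A_{ij}$, and let $\usi[i]\defas\Ustr\ei\in\R^r$ denote (the transpose of) the $i$-th row of $\Us$, so that Assumption~\ref{assump:incoh} gives $\twonorm{\usi[i]}\le\mu\sqrt r/\sqrt n$. The first step is the algebraic identity: using $A_{ij}=\usitr[i]\Sh\usi[j]$ and $\Ustr\Us=\eye$, the $i$-th row of $\bigl(A-\tfrac1p\Om{A}\bigr)\Us$ equals
\begin{align*}
\usitr[i]\,\Sh\,\Bigl(\eye-\tfrac1p\W_i\Bigr),\qquad\text{where }\ \W_i\defas\littlesum_j\delta_{ij}\,\usi[j]\usitr[j]\in\R^{r\times r}.
\end{align*}
Taking $\ell_2$ norms of each row, using $\twonorm{\mathbf{v}\mathbf{B}}\le\twonorm{\mathbf{v}}\twonorm{\mathbf{B}}$, summing over $i$, and using $\littlesum_i\twonorm{\usitr[i]\Sh}^2=\frob{\Us\Sh}^2=\frob{\Sh}^2$, I get
\begin{align*}
\frob{\Bigl(A-\tfrac1p\Om{A}\Bigr)\Us}^2\ \le\ \Bigl(\max_{i\in[n]}\twonorm{\eye-\tfrac1p\W_i}^2\Bigr)\,\frob{\Sh}^2,
\end{align*}
so it suffices to show $\twonorm{\W_i-p\eye}\le p/40$ for every $i\in[n]$ with the stated probability.

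For that I would apply the matrix Bernstein inequality (Lemma~\ref{lem:matbern}, from \cite{tropp}). For a fixed row $i$, $\W_i$ is a sum over $j$ of independent rank-one PSD matrices $\delta_{ij}\usi[j]\usitr[j]$ --- for fixed $i$ the indicators $\delta_{ij}$ involve distinct sampling coordinates and hence are independent, also under the symmetrized model of Lemma~\ref{lem:symm} --- with $\expec{\W_i}=p\littlesum_j\usi[j]\usitr[j]=p\eye$. Each centered summand of $\W_i-p\eye=\littlesum_j(\delta_{ij}-p)\usi[j]\usitr[j]$ has operator norm at most $\twonorm{\usi[j]}^2\le\mu^2r/n$, and the matrix variance parameter is at most $p\,\max_j\twonorm{\usi[j]}^2\le p\mu^2r/n$. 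Matrix Bernstein with deviation $t=p/40$ then gives $\twonorm{\W_i-p\eye}\le p/40$ except with probability at most $2r\exp\bigl(-c\,pn/(\mu^2r)\bigr)$ for an absolute constant $c$.

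Finally I would substitute the parameters: with $p=\E[|\Omega|]/n^2\ge C\alpha\mu^4r^5\log^3n/n$ we have $pn/(\mu^2r)\ge C\alpha\mu^2r^4\log^3n$, so for $C$ a large enough absolute constant the per-row failure probability is below $n^{-11-\alpha}$; a union bound over $i\in[n]$ then gives overall failure probability at most $n^{-10-\alpha}$, as required. I expect the only genuinely delicate point to be the row identity in the first step --- it is exactly what makes the bound hold simultaneously for all $\Sh$ and converts the lemma into a standard concentration question about the sampled covariance $\W_i$ of the rows of $\Us$; the concentration step itself is routine given how generous the value of $p$ is.
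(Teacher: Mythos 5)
Your proof is correct, but it takes a genuinely different route from the paper. The paper's proof is essentially a one-liner by citation: it invokes Theorem 1 of \cite{BhojanapalliJ14} to get the uniform-in-$\Sh$ spectral bound $\twonorm{\Us \Sh \Ustr - \frac{1}{p}\Om{\Us \Sh \Ustr}} \leq \frac{\mu^2 r}{\sqrt{np}}\twonorm{\Sh}$, and then passes to the Frobenius norm by paying a factor $\sqrt{r}$, using that $\left(\Us \Sh \Ustr - \frac{1}{p}\Om{\Us \Sh \Ustr}\right)\Us$ has rank at most $r$; plugging in the assumed $p$ gives the $1/40$ factor. You instead prove the statement from scratch: the row identity reduces the claim to showing $\twonorm{\W_i - p\,\eye} \leq p/40$ for the sampled row second-moment matrices $\W_i = \sum_j \delta_{ij}\usi[j]\usitr[j]$, which you get from matrix Bernstein (the paper's Lemma~\ref{lem:matbern}) plus a union bound over the $n$ rows; uniformity over $\Sh$ is automatic since the concentration event does not involve $\Sh$, and your remark about independence of $\{\delta_{ij}\}_j$ within a fixed row under the symmetrized model is the right thing to check. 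Comparing the two: the paper's argument is shorter but leans on an external ``universal completion'' theorem, and its route consumes $p \gtrsim \mu^4 r^3/n$ just to make $\sqrt{r}\cdot\mu^2 r/\sqrt{np} \leq 1/40$ (it does, however, bound the left side by $\twonorm{\Sh}$ rather than $\frob{\Sh}$); your argument is self-contained, only needs $p \gtrsim \alpha \mu^2 r \log n / n$ for the concentration, and makes transparent exactly where incoherence enters (through $\twonorm{\usi[j]}^2 \leq \mu^2 r/n$ in the range and variance parameters). Both are comfortably within the assumed sampling rate, and your constant accounting and failure-probability bookkeeping ($n^{-11-\alpha}$ per row, union bound to $n^{-10-\alpha}$) are fine.
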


\begin{lemma}\label{lem:bound_tgt2}
Let $M$, $\Omega$, $p$ be as given in Lemma~\ref{lem:error-decay-samesamples}. Then, for every $i,j \in [r]$, we have $(w.p. \geq 1-n^{-10-\alpha})$:
\begin{align*}
\twonorm{\uoi[j] \uoitr - \frac{1}{p} P_{\Omega}\left( \uoi[j] \uoitr \right)} < \frac{1}{40r\sqrt{r}}.
\end{align*}
\end{lemma}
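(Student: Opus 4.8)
The plan is to prove this by the matrix Bernstein inequality (Lemma~\ref{lem:matbern}), in exactly the same spirit as the proof of Lemma~\ref{lem:cond_spec1}. Fix $i,j\in[r]$. For each ordered pair $(a,b)\in[n]\times[n]$ let $\delta_{ab}\in\{0,1\}$ be the indicator of $(a,b)\in\Omega$; by Assumption~\ref{assump:unif} these are independent $\mathrm{Bernoulli}(p)$ variables. Since $(P_\Omega(\cdot))_{ab}$ is just the $(a,b)$ entry kept with probability $p$, we can write
\begin{align*}
\uoi[j]\uoitr - \frac1p \Om{\uoi[j]\uoitr} = \sum_{a,b\in[n]} \X_{ab}, \qquad \X_{ab} \defas \Bigl(1-\frac{\delta_{ab}}{p}\Bigr)\,(\uo_j)_a\,(\uo_i)_b\;\e_a\trans{\e_b},
\end{align*}
where $(\uo_i)_b$ denotes the $b$-th coordinate of $\uo_i$. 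Because $\expec{\delta_{ab}/p}=1$, each $\X_{ab}$ has mean zero, and the $\X_{ab}$ are independent.

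First I would record the two quantities matrix Bernstein needs. For the uniform bound, incoherence (Assumption~\ref{assump:incoh}) gives $|(\uo_i)_b|\le \mu\sqrt r/\sqrt n$ for all $b$, and $|1-\delta_{ab}/p|\le 1/p$ almost surely, so $\twonorm{\X_{ab}}\le \frac{\mu^2 r}{pn}=:L$. For the matrix variance, note that $\X_{ab}$ is a scalar multiple of $\e_a\trans{\e_b}$, hence $\X_{ab}\trans{\X_{ab}}$ is that scalar squared times $\e_a\trans{\e_a}$; using $\expec{(1-\delta_{ab}/p)^2}=(1-p)/p\le 1/p$ together with $\sum_b (\uo_i)_b^2=\twonorm{\uo_i}^2=1$,
\begin{align*}
\Bigl\|\sum_{a,b}\expec{\X_{ab}\trans{\X_{ab}}}\Bigr\| = \Bigl\|\tfrac{1-p}{p}\,\mathrm{diag}\bigl((\uo_j)_a^2\bigr)_{a\in[n]}\Bigr\| \le \frac1p \max_a (\uo_j)_a^2 \le \frac{\mu^2 r}{pn} =: \sigma^2,
\end{align*}
and the bound on $\|\sum_{a,b}\expec{\trans{\X_{ab}}\X_{ab}}\|$ is identical (now using $\twonorm{\uo_j}=1$).

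Matrix Bernstein then yields, for $t\in(0,1]$, a bound of the form $2n\exp\bigl(-\tfrac{t^2/2}{\sigma^2+Lt/3}\bigr)$; since $L=\sigma^2$ and $t\le1$ here, the denominator is at most $2\sigma^2$, so this is at most $2n\exp\bigl(-t^2/(4\sigma^2)\bigr)$. Taking $t=\frac{1}{40 r\sqrt r}$ gives exponent $\frac{t^2}{4\sigma^2}=\frac{pn}{6400\,\mu^2 r^4}$, and substituting $p=\frac{C\alpha\mu^4 r^5\log^3 n}{n}$ makes this equal to $\frac{C\alpha\mu^2 r\log^3 n}{6400}$, which exceeds $(14+\alpha)\log n$ once $C$ is a large enough absolute constant. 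Hence the failure probability for a fixed pair $(i,j)$ is at most $n^{-12-\alpha}$, and a union bound over the $r^2\le n^2$ pairs $i,j\in[r]$ gives the claimed probability $1-n^{-10-\alpha}$.

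There is no genuine obstacle here: this is a routine matrix-concentration estimate of the same type as Lemma~\ref{lem:cond_spec1}. The only two points that need attention are (i) getting the variance proxy to collapse to $O(\mu^2 r/(pn))$ rather than picking up a second incoherence factor --- this relies on the fact that one of the two unit vectors contributes $\sum_b(\uo_i)_b^2=1$ --- and (ii) checking that, after plugging in the value of $p$, the Bernstein exponent really does dominate $(10+\alpha)\log n$ at the small target radius $1/(40 r^{3/2})$, which is precisely where the large sample complexity $pn\gtrsim \mu^4 r^5\log^3 n$ is used.
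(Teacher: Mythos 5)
Your proposal is correct, and the underlying tool is the same one the paper ultimately relies on, but your route through it is different in structure. The paper disposes of this lemma in two lines by reduction: it rescales $\uoi[j]\uoitr - \frac{1}{p}\Om{\uoi[j]\uoitr}$ by $\beta = \frac{2\sqrt{n}\infnorm{\uoi[j]\uoitr}}{\sqrt{p}} \leq \frac{2\mu^2 r}{\sqrt{np}}$, invokes Lemma~\ref{lem:sat-defn} to check that the rescaled matrix satisfies the generic moment conditions of Definition~\ref{defn:moment-condns}, and then applies the ready-made spectral bound $\twonorm{\Hmat}\leq 3\sqrt{\alpha}$ of Lemma~\ref{lem:cond_spec1}, which after unscaling gives $\frac{6\sqrt{\alpha}\mu^2 r}{\sqrt{np}} \ll \frac{1}{40 r\sqrt{r}}$ once $p=\frac{C\alpha\mu^4 r^5\log^3 n}{n}$ is substituted. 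You instead run matrix Bernstein directly on the rank-one summands $\bigl(1-\frac{\delta_{ab}}{p}\bigr)(\uo_j)_a(\uo_i)_b\,\e_a\trans{\e_b}$, and your variance computation is correct and in fact slightly sharper: because one of the two incoherence factors collapses via $\twonorm{\uoi}=1$, your variance proxy is $\frac{\mu^2 r}{pn}$ rather than the worst-case $\frac{1}{n}$-per-entry bound used by the generic Definition~\ref{defn:moment-condns} machinery, so your final deviation bound carries an extra factor of roughly $\frac{1}{\mu r^{1/2}\sqrt{\log n}}$ of slack. A small additional benefit of your direct argument is that it applies verbatim to the non-symmetric matrix $\uoi[j]\uoitr$ with $i\neq j$ (via the rectangular form of Bernstein, hence your $2n$ prefactor), whereas Lemma~\ref{lem:sat-defn} is stated for symmetric matrices and strictly speaking one would dilate first. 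The trade-off is length: the paper's reduction reuses lemmas it has already paid for, while your computation is self-contained but repeats the Bernstein bookkeeping. Your exponent arithmetic and the union bound over the $r^2$ pairs check out, so the claimed probability $1-n^{-10-\alpha}$ is justified.
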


\begin{lemma}\label{lem:bound_tgt3}
Let $M$, $\Omega$, $p$ be as given in Lemma~\ref{lem:error-decay-samesamples}. Then, for every $i,j \in [r]$ and $s \in [n]$, we have $(w.p. \geq 1-n^{-10-\alpha})$:
\begin{align*}
\abs{\ip{\uoi}{\uoi[j]} - \frac{1}{p} \sum_{(s,l) \in \Omega} \left(\uoi\right)_l \left(\uoi[j]\right)_l} < \frac{1}{40r\sqrt{r}}.
\end{align*}
\end{lemma}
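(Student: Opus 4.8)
The plan is to read the displayed quantity as the deviation of an unbiased importance-sampling estimator from its mean, and to bound it by the scalar Bernstein inequality followed by a union bound over all triples $(i,j,s)$. Fix $i,j\in[r]$ and $s\in[n]$. For each column index $l\in[n]$ set $\chi_l\defas\ind{(s,l)\in\Omega}$; by Assumption~\ref{assump:unif} the $\chi_l$ are independent Bernoulli$(p)$ variables. Define
\begin{align*}
Z_l\defas\frac{\chi_l}{p}\left(\uoi\right)_l\left(\uoi[j]\right)_l-\left(\uoi\right)_l\left(\uoi[j]\right)_l,\qquad l\in[n].
\end{align*}
Then the $Z_l$ are independent, $\E[Z_l]=0$, and $\sum_{l\in[n]}Z_l=\frac1p\sum_{(s,l)\in\Omega}\left(\uoi\right)_l\left(\uoi[j]\right)_l-\ip{\uoi}{\uoi[j]}$, so $\abs{\sum_l Z_l}$ is exactly the left-hand side we must control.

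The Bernstein parameters come straight from incoherence. Since $\M=\Us\So\Ustr$ satisfies Assumption~\ref{assump:incoh}, $\abs{\left(\uoi\right)_l}=\abs{(\Us)_{l,i}}\leq\twonorm{\trans{\e_l}\Us}\leq\frac{\mu\sqrt r}{\sqrt n}$ for all $i\in[r]$, $l\in[n]$. Hence $\abs{Z_l}\leq R\defas\frac1p\cdot\frac{\mu^2 r}{n}$, and, using $\twonorm{\uoi}=1$,
\begin{align*}
\sigma^2\defas\sum_{l\in[n]}\mathrm{Var}(Z_l)\leq\frac1p\sum_{l\in[n]}\left(\uoi\right)_l^2\left(\uoi[j]\right)_l^2\leq\frac{\mu^2 r}{np}\sum_{l\in[n]}\left(\uoi\right)_l^2=\frac{\mu^2 r}{np}.
\end{align*}
Taking $t\defas\frac{1}{40r\sqrt r}$, we have $Rt/3\leq\sigma^2$ (since $t<3$), so $\sigma^2+Rt/3\leq 2\sigma^2$ and Bernstein's inequality yields $\prob{\abs{\sum_l Z_l}>t}\leq 2\exp(-t^2/(4\sigma^2))$. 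Substituting the sampling rate $np\geq C\alpha\mu^4 r^5\log^3 n$ prescribed in Lemma~\ref{lem:error-decay-samesamples} gives $\frac{t^2}{4\sigma^2}=\frac{np}{6400\,\mu^2 r^4}\geq\frac{C\alpha\mu^2 r\log^3 n}{6400}$, which exceeds $(14+\alpha)\log n$ once the global constant $C$ is large enough (recall $\mu,r\geq1$ and $\alpha>1$). Absorbing the factor $2$, the bad event for the fixed triple $(i,j,s)$ has probability at most $n^{-13-\alpha}$.

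Finally, a union bound over the at most $r^2 n\leq n^3$ choices of $(i,j,s)$ gives the inequality for all $i,j,s$ simultaneously with probability at least $1-n^{3}\cdot n^{-13-\alpha}=1-n^{-10-\alpha}$. I do not anticipate a genuine obstacle: this is routine concentration, and the only points needing care are (i) the immediate passage from the matrix incoherence of $\M$ to the entrywise eigenvector bound $\abs{\left(\uoi\right)_l}\leq\mu\sqrt r/\sqrt n$ (since $\left(\uoi\right)_l$ is an entry of $\Us$), and (ii) checking that the prescribed $p$ is large enough to make the Bernstein exponent beat $(10+\alpha)\log n$ with slack for the union bound, which it does by construction. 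If $\Omega$ is generated by symmetrized sampling, the single term $l=s$ contributes at most a harmless factor $2$ and changes nothing.
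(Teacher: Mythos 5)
Your proposal is correct and matches the paper's proof: the paper likewise writes the deviation as $\sum_l B_l$ with $B_l=(1-\delta_{sl}/p)(\uoi)_l(\uoi[j])_l$, bounds $|B_l|$ and $\sum_l\E[B_l^2]$ via incoherence exactly as you do, and invokes the scalar Bernstein inequality with the prescribed $p$. Your write-up simply makes explicit the constant bookkeeping and the union bound over $(i,j,s)$ that the paper leaves implicit.
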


\subsection{Detailed Proof of Lemma~\ref{lem:error-decay-samesamples}}\label{sec:mainproof-error-decay-samesamples}
Let $\Em \defas {\X}-{\PkM}$, $\Hmat \defas \Em - \frac{1}{p} \Om{\Em}$ and $\G \defas \X - \frac{1}{p} \Om{\X - \M} = \PkM + \Hmat - \frac{1}{p}\Om{\M - \PkM}$.
That is, $\Xplus = \Pk\left(\G \right)$.

For simplicity, {\em in this section}, we let $\M = \Us \So \Ustr + \Usperp \Sob \Usperptr$ denote the eigenvalue decomposition (EVD) of $\M$ with $\PkM = \Us \So \Ustr$, and also let $\Mb = \Usperp \Sob \Usperptr$.
We also use the shorthand notation $\cT \defas \cT\left(\PkM\right)$.

Representing $\X$ in terms of its projection onto $\cT$ and its complement, we have:
\begin{align}
\X = \Us \Lo_0 \Ustr + \Us \Lo_1 \Usperptr + \Usperp \trans{\Lo_1} \Ustr + \Usperp \Lo_3 \Usperptr,
\label{eqn:Xt-decomp}
\end{align}
and also conclude that: 
\begin{align*}
\frob{\So - \Lo_0} \leq \frob{\X - \PkM}, \quad \frob{\Lo_1} \leq \frob{\X - \PkM}, \mbox{ and }\quad
\frob{\Lo_3} \leq 
\frac{\frob{\X - \PkM}}{n^2},
\end{align*}
where the last conclusion follows from Lemma~\ref{lem:perturbation_perpspace} and the hypothesis that $\frob{\X - \PkM} < \frac{\abs{\sk}}{n^2}$.
Using $\|\Em\|_F\leq \sigma_k/n^2$, we have: 
\begin{align*}
\frob{\Hmat} \leq \frac{2}{p} \frob{\Em} \leq \frac{2}{p} \frac{\sk}{n^2} &\leq \frac{\sk}{8},
\mbox{ and,}\\
\frob{\frac{1}{p}\Om{\Mb}} \leq \frac{1}{p}\frob{\Mb} &\leq \frac{1}{p} \frac{\sk}{n^2} \leq \frac{\sk}{8},
\end{align*}
where we used the hypothesis that $\frob{\M - \PkM} < \frac{\sk}{n^2}$ in the second inequality.

The above bounds implies: 
\begin{align}\label{eq:lem2_tmp_1}
\frob{\cPT{\Hmat - \frac{1}{p}\Om{\Mb} }} \leq \frob{{\Hmat - \frac{1}{p}\Om{\Mb} }}
\leq \twonorm{\Hmat} + \frob{\frac{1}{p}\Om{\Mb}} \leq \frac{\sk}{4}.
\end{align}
Similarly,  \begin{equation}\label{eq:lem2_tmp_2}\twonorm{\cPTp{\Hmat - \frac{1}{p}\Om{\Mb} }}\leq \frac{\sk}{4}.\end{equation}

Since $\Xplus = P_k\left(\PkM + \Hmat - \frac{1}{p} \Om{\Mb}\right)$, using Lemma~\ref{lem:daviskahan-approx} with \eqref{eq:lem2_tmp_1}, \eqref{eq:lem2_tmp_2}, we have: 
\begin{align*}
\frob{\PkM - \Xplus} &= \frob{P_k \left(\PkM+\cPTp{\Hmat - \frac{1}{p}\Om{\Mb} } \right) - P_k\left(\PkM + \Hmat - \frac{1}{p}\Om{\Mb} \right)} \\
	&\leq c \frob{\cPT{\Hmat - \frac{1}{p}\Om{\Mb} }}.
\end{align*}
Now, using Claim~\ref{cl:lem2_tmp2}, we have $\frob{\cPT{\Hmat - \frac{1}{p}\Om{\Mb} }} < \frac{1}{10} \frob{\PkM - \X} + \frac{2}{\sqrt{p}} \frob{\Mb}$, which along with the above equation {\em establishes the lemma}. We now state and prove the claim bounding $\frob{\cPT{\Hmat - \frac{1}{p}\Om{\Mb} }}$ that we used above to finish the proof. \\

\begin{claim}\label{cl:lem2_tmp2}
Assume notation defined in the section above. Then, we have: 
$$\frob{\cPT{\Hmat - \frac{1}{p}\Om{\Mb} }} < \frac{1}{10} \frob{\PkM - \X} + \frac{2}{\sqrt{p}} \frob{\Mb}.$$
\end{claim}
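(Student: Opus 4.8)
The plan is to expand $\cPT{\Hmat - \frac{1}{p}\Om{\Mb}}$ using the orthogonal decomposition \eqref{eqn:matrix-orth-decomp} of a matrix relative to the tangent space $\cT = \cT(\PkM)$, and bound each of the three tangent-space components separately. Recall that $\Hmat = \Em - \frac1p\Om{\Em}$ with $\Em = \X - \PkM$, and that $\X$ has the decomposition \eqref{eqn:Xt-decomp} in terms of $\Lo_0,\Lo_1,\Lo_3$ with $\frob{\So - \Lo_0}, \frob{\Lo_1} \leq \frob{\X-\PkM}$ and $\frob{\Lo_3} \leq \frob{\X-\PkM}/n^2$. The key point is that $\Em$ itself is \emph{almost} in $\cT$: its $\cT^\perp$ component is $\Usperp\Lo_3\Usperptr$ which is negligible ($\frob{\cdot}\le \frob{\Em}/n^2$), and $\PoA$-type operators roughly preserve this structure. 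So I would write $\cPT{\Hmat - \frac1p\Om{\Mb}}$ as a sum of pieces of the form $\cP_{\cT}\!\left(\Nmat - \frac1p\Om{\Nmat}\right)$ where $\Nmat$ is each of the (near-)tangent pieces $\Us\Lo_0\Ustr$, $\Us\Lo_1\Usperptr + \Usperp\trans{\Lo_1}\Ustr$, $\Usperp\Lo_3\Usperptr$, $\Mb = \Usperp\Sob\Usperptr$, and collect the dominant contributions.

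The main work is then to invoke Lemmas~\ref{lem:bound_tgt1}, \ref{lem:bound_tgt2}, \ref{lem:bound_tgt3} to show that each operator $\Nmat \mapsto \cP_\cT(\Nmat - \frac1p\Om{\Nmat})$, restricted to tangent-space inputs, contracts by a factor $\le \tfrac1{10}$. Concretely, for $\Nmat = \Us\Sh\Ustr$ (the $\Us\Ustr\cdot\Us\Ustr$ block) Lemma~\ref{lem:bound_tgt1} gives exactly $\frob{(\Nmat - \frac1p\Om{\Nmat})\Us} \le \frac1{40}\frob{\Sh}$, and the $\Us\Ustr(\cdot)\Us\Ustr$ projection of $\Nmat - \frac1p\Om{\Nmat}$ is $\Us\Ustr(\Nmat - \frac1p\Om{\Nmat})\Us\Ustr$, which is controlled by this. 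For the off-diagonal blocks $\Us\Lo_1\Usperptr + \Usperp\trans{\Lo_1}\Ustr$ one argues similarly — the $\cT$-projection $\Us\Ustr(\cdot)\Usperp\Usperptr$ again reduces to multiplying by $\Us$ on one side, so Lemma~\ref{lem:bound_tgt1} applies after expanding $\Lo_1$ in the column basis of $\Usperp$; the contributions indexed entrywise use Lemmas~\ref{lem:bound_tgt2}–\ref{lem:bound_tgt3} to get the $\frac1{40r\sqrt r}$ per-entry bounds, and summing over the $O(r)$ relevant coordinates yields an overall $O(1/\sqrt r)\cdot$(something) which is safely $\le \tfrac1{10}$. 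Summing the $\Lo_0$ and $\Lo_1$ contributions gives the $\frac1{10}\frob{\PkM - \X}$ term (since $\frob{\Lo_0 - \So}^2 + \frob{\Lo_1}^2 + \dots \lesssim \frob{\X-\PkM}^2$ and the $\Lo_3$ piece is $O(1/n^2)$ smaller and absorbed). The $\Mb$-dependent term: $\cP_\cT$ of $\frac1p\Om{\Mb}$ — here $\frac1p\Om{\Mb}$ need not be small in Frobenius norm a priori, but $\cP_\cT(\frac1p\Om{\Mb})$ is, because $\cP_\cT$ only keeps the components touching $\Us$, and $\Us\Ustr(\frac1p\Om{\Mb})$ has small Frobenius norm by a Bernstein-type bound of the form $\frob{\Us\Ustr(\frac1p\Om{\Mb} - \Mb)} \le \frac{1}{\sqrt p}\frob{\Mb}$ (since $\Us\Ustr\Mb = 0$); this produces the $\frac{2}{\sqrt p}\frob{\Mb}$ term.

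I expect the main obstacle to be the bookkeeping of which sampled bilinear forms appear in each $\cT$-block and verifying that the relevant lemma (among \ref{lem:bound_tgt1}--\ref{lem:bound_tgt3}) applies with the right normalization, in particular getting a clean enough constant to land at $\frac{1}{10}$ after summing the three blocks. The $\frob{\Lo_3}$ block and the $\Mb$ block need the additional observation that $\Om{\cdot}$ does not blow up Frobenius norm when combined with the $1/n^2$-smallness guaranteed by Lemma~\ref{lem:perturbation_perpspace} and the hypothesis $\frob{\X-\PkM}, \frob{\M - \PkM} < \sk/n^2$ (indeed $\le \sk/n^3$ in the lemma statement), so these are genuinely lower-order. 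The $\cP_\cT$ applied to $\Em$ directly (the ``$\frob{\Hmat}$ without the $\frac1p\Om{}$'') contributes nothing beyond $\frob{\Em}$ but that term is exactly what gets contracted, so no issue. Once each block is bounded, the triangle inequality assembles the claimed estimate.
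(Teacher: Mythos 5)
Your proposal follows essentially the same route as the paper: reduce $\frob{\cPT{\cdot}}$ to one-sided products with $\Us$, decompose $\X-\PkM$ via \eqref{eqn:Xt-decomp}, bound the $\So-\Lo_0$, $\Lo_1$ and $\trans{\Lo_1}$ blocks with Lemmas~\ref{lem:bound_tgt1}--\ref{lem:bound_tgt3} (note Lemma~\ref{lem:bound_tgt1} by itself only covers the $\Us\Sh\Ustr$ block, so the off-diagonal blocks must go through the rank-one expansions of Lemmas~\ref{lem:bound_tgt2} and~\ref{lem:bound_tgt3}, which you do then invoke), and dispose of the $\Lo_3$ block via Lemma~\ref{lem:perturbation_perpspace} and the $\sk/n^3$ hypothesis. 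The only divergence is the $\Mb$ term: the paper simply uses the trivial bound $\frob{\frac{1}{p}\Om{\Mb}}\le\frac{1}{p}\frob{\Mb}$ (so its own proof delivers a $\frac{2}{p}\frob{\Mb}$ term, which is what Lemma~\ref{lem:error-decay-samesamples} and the proof of Theorem~\ref{thm:main} actually consume), whereas your Bernstein-type bound exploiting $\Us\Ustr\Mb=0$ is plausible (and would even match the $\frac{2}{\sqrt{p}}$ constant in the claim as stated) but is left unproven in your sketch and is not needed downstream.
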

\begin{proof}
We first bound $\frob{\cPT{\Hmat}}$.
Recalling that $\PkM = \Us \So \Ustr$ is the EVD of $\PkM$, we have:
\begin{align*}
\frob{\cPT{\Hmat}} < 2 \frob{\Hmat \Us}.
\end{align*}
Using \eqref{eqn:Xt-decomp}, we have:
\begin{align}
\Hmat \Us &= \left[\Us \left(\So - \Lo_0\right) \Ustr -
		\frac{1}{p} P_{\Omega}\left(\Us \left(\So - \Lo_0\right) \Ustr\right) \right] \Us
		+ \left[ \Us \Lo_1 \Usperptr - \frac{1}{p} P_{\Omega}\left( \Us \Lo_1 \Usperptr \right) \right] \Us \nonumber \\
		&+ \left[ \Usperp \Lo_2 \Ustr - \frac{1}{p} P_{\Omega}\left( \Usperp \Lo_2 \Ustr \right) \right] \Us
		+ \left[  \Usperp \Lo_3 \Usperptr - \frac{1}{p} P_{\Omega}\left( \Usperp \Lo_3 \Usperptr \right) \right] \Us.
\label{eqn:Tgt_error}
\end{align}

\textbf{Step I}: To bound the first term in \eqref{eqn:Tgt_error}, we use Lemma~\ref{lem:bound_tgt1} to obtain:
\begin{align}
\frob{\left(\Us \left(\So-\Lo_0\right) \Ustr - \frac{1}{p}\Om{\Us \left(\So-\Lo_0\right) \Ustr}\right) \Us} \leq \frac{1}{40} \frob{\left(\So-\Lo_0\right)} \leq \frac{1}{40} \frob{\X-\PkM}.
\label{eqn:errordecay-term1}
\end{align}

\textbf{Step II}: To bound the second term, we let $\U \defas \Usperp \trans{\Lo_1}$, and proceed as follows:
\begin{align*}
&\hspace*{-50pt}\twonorm{\left[ \Us \Lo_1 \Usperptr - \frac{1}{p} P_{\Omega}\left( \Us \Lo_1 \Usperptr \right) \right] \uoi}
= \twonorm{\sum_{j=1}^r \left( \uoi[j] \uitr[j] - \frac{1}{p} P_{\Omega}\left( \uoi[j] \uitr[j] \right) \right) \uoi} \\
&= \twonorm{\sum_{j=1}^r \left( \uoi[j] \uoitr - \frac{1}{p} P_{\Omega}\left( \uoi[j] \uoitr \right) \right) \ui[j]} \leq \sum_{j=1}^r \twonorm{\uoi[j] \uoitr - \frac{1}{p} P_{\Omega}\left( \uoi[j] \uoitr \right) } \twonorm{\ui[j]} \\
&\stackrel{(\zeta_1)}{\leq} \frac{1}{40r\sqrt{r}}\sum_{j=1}^r \twonorm{\ui[j]}
\leq  \frac{1}{40\sqrt{r}} \frob{\Lo_1 \Usperptr} \leq \frac{1}{40\sqrt{r}} \frob{\PkM - \X},
\end{align*}
where $(\zeta_1)$ follows from Lemma~\ref{lem:bound_tgt2}.
This means that we can bound the second term as:
\begin{align}
\frob{\left[ \Us \Lo_1 \Usperptr - \frac{1}{p} P_{\Omega}\left( \Us \Lo_1 \Usperptr \right) \right] \Us}
\leq \frac{1}{40} \frob{\PkM - \X}.
\label{eqn:errordecay-term2}
\end{align}

\textbf{Step III}: We now let $\U \defas \Usperp \Lo_2$ and turn to bound the third term in \eqref{eqn:Tgt_error}. We have:
\begin{align*}
&\hspace*{-50pt}\twonorm{\left[ \Usperp \Lo_2 \Ustr - \frac{1}{p} P_{\Omega}\left( \Usperp \Lo_2 \Ustr \right) \right] \uoi}
= \twonorm{\sum_{j=1}^r \ip{\uoi[j]}{\uoi} \ui[j] - \ui[j] \odot \e_{\ip{\uoi[j]}{\uoi}_{\Omega}}} \\
&= \twonorm{\sum_{j=1}^r \ui[j] \odot \left(\ip{\uoi[j]}{\uoi} \ones - \e_{\ip{\uoi[j]}{\uoi}_{\Omega}}\right)} \stackrel{(\zeta_1)}{\leq} \frac{1}{40r\sqrt{r}} \sum_{j=1}^r \twonorm{\ui[j]}\\
&\leq \frac{1}{40\sqrt{r}} \frob{\Usperp \Lo_2} \leq \frac{1}{40\sqrt{r}} \frob{\PkM - \X},
\end{align*}
where $\ones$ denotes the all ones vector, and $\e_{\ip{\uoi[j]}{\uoi}_{\Omega}}$ denotes a vector whose $s^{\textrm{th}}$ coordinate is given by $\frac{1}{p} \sum_{l: (s,l)\in \Omega} \left(\uoi[j]\right)_l \left(\uoi\right)_l$. Note that $(\zeta_1)$ follows from Lemma~\ref{lem:bound_tgt3}.
So, we again have:
\begin{align}
\frob{\left[ \Usperp \Lo_2 \Ustr - \frac{1}{p} P_{\Omega}\left( \Usperp \Lo_2 \Ustr \right) \right] \Us}
\leq \frac{1}{40} \frob{\PkM - \X}.
\label{eqn:errordecay-term3}
\end{align}

\textbf{Step IV}: To bound the last term in \eqref{eqn:Tgt_error}, we use Lemma~\ref{lem:perturbation_perpspace} to conclude
\begin{align}
&\hspace*{-27pt}\frob{\left[  \Usperp \Lo_3 \Usperptr - \frac{1}{p} P_{\Omega}\left( \Usperp \Lo_3 \Usperptr \right) \right] \Us}
\leq \frob{\Usperp \Lo_3 \Usperptr - \frac{1}{p} P_{\Omega}\left( \Usperp \Lo_3 \Usperptr \right) } \nonumber\\
&\leq \frac{2}{p} \frob{ \Usperp \Lo_3 \Usperptr }
= \frac{2}{p} \frob{ \cPTp{ \X } } \leq \frac{2}{p} \frac{\frob{\PkM - \X}^2}{{\sigma_k(\X)}} \leq \frac{1}{40n} \frob{\PkM-\X}.
\label{eqn:errordecay-term4}
\end{align}

Combining \eqref{eqn:errordecay-term1}, \eqref{eqn:errordecay-term2}, \eqref{eqn:errordecay-term3} and \eqref{eqn:errordecay-term4}, we have:
\begin{align}
\frob{\cPT{\Hmat}} \leq \frac{1}{10} \frob{\PkM - \X}.
\label{eqn:frob-Hmat}
\end{align}

On the other hand, we trivially have:
\begin{align}
\frob{\frac{1}{p}\Om{\Mb}} \leq \frac{2}{{p}} \frob{\Mb}.
\label{eqn:frob-Mb}
\end{align}

Claim now follows by combining~\eqref{eqn:frob-Hmat} and~\eqref{eqn:frob-Mb}.
%
\end{proof}
\subsection{Proofs of Technical Lemmas from Section~\ref{sec:tech-error-decay-samesamples}}\label{sec:techproof-error-decay-samesamples}
\begin{proof}[Proof of Lemma~\ref{lem:perturbation_perpspace}]
Let $B=\U\Lo\Utr$ be EVD of $B$. Then, we have: 
\begin{align*}
\frob{\cP_{\cT(\A)^{\perp}} \left( \mB \right)}&=\frob{\Usperp \Usperptr \mB \Usperp \Usperptr} = \frob{\Usperptr \U \Lo \Utr \Usperp} = \frob{\Usperptr \U \Lo \inv{\Lo} \Lo \Utr \Usperp} \\
	&\leq \frob{\Usperptr \U \Lo} \twonorm{\inv{\Lo}} \frob{\Lo \Utr \Usperp}= \frob{\A-\mB} \twonorm{\inv{\Lo}} \frob{\A-\mB}\\
	&\leq \frac{\frob{\A - \mB}^2}{{\sigma_k(\mB)}}.
\end{align*}
Hence Proved.
\end{proof}

We will now prove Lemma~\ref{lem:daviskahan-approx}, which is a natural extension of the Davis-Kahan theorem. In order to do so, we will first recall the Davis-Kahan theorem:
\begin{theorem}[Theorem VII.3.1 of \cite{bhatia}]\label{thm:davis-kahan-bhatia}
Let $\A$ and $\mB$ be symmetric matrices. Let $S_1, S_2 \subseteq \R$ be subsets separated by $\nu$. Let $\Em = P_{\A}(S_1)$ and $\Fm = P_{\mB}(S_2)$ be an orthonormal basis of the eigenvectors of $\A$ with eigenvalues in $S_1$ and that of the eigenvectors of $\mB$ with eigenvalues in $S_2$ respectively. Then, we have:
\begin{align*}
\twonorm{\Em\Fm} &\leq \frac{1}{\nu} \twonorm{\A-\mB},\ \ \ \frob{\Em \Fm} \leq \frac{1}{\nu} \frob{\A-\mB}.
\end{align*}
\end{theorem}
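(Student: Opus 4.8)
The plan is the classical Sylvester–equation (Rosenblum) proof of the $\sin\Theta$ theorem, specialized to the two norms named in the statement. Let $\Pi_1$ be the orthogonal projection of $\R^n$ onto the span of the eigenvectors of $\A$ with eigenvalue in $S_1$ (so $\Pi_1=\Em\trans{\Em}$, in the notation of the statement), and $\Pi_2$ the corresponding projection for $\mB$ and $S_2$. Since $\A$ commutes with $\Pi_1$ and $\mB$ with $\Pi_2$, the compressions $\A_1\defas\Pi_1\A\Pi_1$ and $\mB_2\defas\Pi_2\mB\Pi_2$ are Hermitian with $\mathrm{spec}(\A_1)\subseteq S_1$ and $\mathrm{spec}(\mB_2)\subseteq S_2$ on their respective ranges. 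The target quantities $\twonorm{\Em\Fm}$ and $\frob{\Em\Fm}$ equal $\twonorm{\Pi_1\Pi_2}$ and $\frob{\Pi_1\Pi_2}$ (the cross-Gram matrix $\trans{\Em}\Fm$ of principal cosines and the product $\Pi_1\Pi_2$ have the same nonzero singular values), so it suffices to bound $\norm{\Pi_1\Pi_2}$ for $\norm{\cdot}$ an arbitrary unitarily invariant norm.

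Next I would set $X\defas\Pi_1\Pi_2$ and derive a Sylvester equation. Using $\Pi_1\A=\A\Pi_1=\A_1$ (the block $\Pi_1^{\perp}\A\Pi_1=\A\Pi_1^{\perp}\Pi_1=0$ vanishes) and, likewise, $\mB\Pi_2=\Pi_2\mB=\mB_2$, one gets
\begin{align*}
\Pi_1(\A-\mB)\Pi_2 \;=\; \Pi_1\A\Pi_2-\Pi_1\mB\Pi_2 \;=\; \A_1 X-X\mB_2 .
\end{align*}
Then I would invert the Sylvester operator $\cL(Y)\defas\A_1 Y-Y\mB_2$ acting on operators $\mathrm{range}\,\Pi_2\to\mathrm{range}\,\Pi_1$: its eigenvalues are the differences $\alpha-\beta$ with $\alpha\in\mathrm{spec}(\A_1)$, $\beta\in\mathrm{spec}(\mB_2)$, hence have modulus $\ge\nu$, so $\cL$ is invertible, and the claim is $\norm{\cL^{-1}}\le 1/\nu$ in every unitarily invariant norm. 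For the Frobenius norm this is immediate: diagonalizing $\A_1=\sum_i\alpha_i p_i\trans{p_i}$ and $\mB_2=\sum_j\beta_j q_j\trans{q_j}$, the operator $\cL$ becomes the entrywise multiplier $W_{ij}\mapsto(\alpha_i-\beta_j)W_{ij}$ in the bases $\{p_i\},\{q_j\}$, and $|\alpha_i-\beta_j|\ge\nu$ gives $\frob{\cL^{-1}W}\le\nu^{-1}\frob{W}$. For the operator norm, after translating both spectra by a common scalar (and, in general, after splitting $\A_1$ into the parts of its spectrum lying on either side of the gap so that each resulting block is one‑sided) one may assume $\A_1\succeq\tfrac{\nu}{2}I$ and $\mB_2\preceq-\tfrac{\nu}{2}I$ on their ranges; then the integral identity $\cL^{-1}(W)=\int_0^{\infty}e^{-t\A_1}\,W\,e^{t\mB_2}\,dt$ holds, and $\twonorm{e^{-t\A_1}}\le e^{-t\nu/2}$, $\twonorm{e^{t\mB_2}}\le e^{-t\nu/2}$ yield $\twonorm{\cL^{-1}W}\le\twonorm{W}\int_0^{\infty}e^{-t\nu}\,dt=\twonorm{W}/\nu$ (alternatively one cites Rosenblum's theorem on Sylvester equations directly).

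Finally, putting the pieces together: $X=\cL^{-1}\big(\Pi_1(\A-\mB)\Pi_2\big)$, so
\begin{align*}
\norm{\Pi_1\Pi_2}=\norm{X}\le\nu^{-1}\norm{\Pi_1(\A-\mB)\Pi_2}\le\nu^{-1}\norm{\A-\mB},
\end{align*}
using that orthogonal projections are contractions and $\norm{PZQ}\le\norm{P}\,\norm{Z}\,\norm{Q}\le\norm{Z}$ for unitarily invariant norms. Specializing $\norm{\cdot}$ to $\twonorm{\cdot}$ and to $\frob{\cdot}$ gives the two stated inequalities. The only step that is more than bookkeeping is the operator-norm bound $\twonorm{\cL^{-1}}\le1/\nu$ in the middle step: the Frobenius version — which is the one actually used later in the paper — is the two-line diagonalization above, whereas the operator-norm version genuinely requires the integral (Rosenblum) representation together with the reduction to one-sided spectral separation.
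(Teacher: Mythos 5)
The paper does not prove this statement at all --- it is quoted (as Theorem~VII.3.1) from Bhatia's book and used as a black box --- so the only meaningful comparison is with the textbook argument, and your Sylvester-equation strategy is exactly that classical route. Your Frobenius-norm half is correct and complete: reducing $\frob{\Em\Fm}$ to $\frob{\Pi_1\Pi_2}$, deriving $\A_1X-X\mB_2=\Pi_1(\A-\mB)\Pi_2$ with $X=\Pi_1\Pi_2$, and inverting the Sylvester operator as the entrywise multiplier $1/(\alpha_i-\beta_j)$ with $\abs{\alpha_i-\beta_j}\ge\nu$ is precisely Bhatia's proof, and it is valid for arbitrary separated sets $S_1,S_2$.

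The operator-norm half, however, has a genuine gap at the step you yourself flag as the crux. For arbitrary separated subsets of $\R$ there need not be any single ``gap'': the sets can interlace (e.g.\ $S_1=\{0,2\nu\}$, $S_2=\{\nu,3\nu\}$), so the reduction ``translate by a common scalar and split into the parts on either side of the gap'' is simply unavailable. Worse, no argument of this shape can give the constant $1$ in that generality: for interlacing spectra the operator-to-operator norm of $\cL^{-1}$ genuinely exceeds $1/\nu$ (already a $2\times2$ Schur-multiplier example with $\alpha=(0,2\nu)$, $\beta=(\nu,3\nu)$ gives norm $>1.2/\nu$; the sharp general constant is $\pi/(2\nu)$, Bhatia--Davis--McIntosh), which is why Bhatia proves the unitarily-invariant/operator-norm sin-theta bound only under interval (annulus) separation. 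And even in that case --- one set inside an interval, the other outside its $\nu$-fattening, which is exactly how the paper applies the theorem, with $S_1=[-\abs{\sigma_k}/2,\abs{\sigma_k}/2]$ and $S_2$ the complement of a larger symmetric interval --- your plan of splitting the two-sided spectrum into its one-sided halves and applying the integral formula to each does not recover constant $1$: recombining the two pieces costs a factor $2$ by the triangle inequality, or $\sqrt{2}$ using that their row spaces are orthogonal. The standard fix is different: after translating so that the interval containing $S_1$ is centered at the origin, one has $\twonorm{\A_1}\le\rho$ and $\twonorm{\mB_2^{-1}}\le 1/(\rho+\nu)$, and the Sylvester equation is solved by the geometric series $X=-\sum_{n\ge0}\A_1^n\,W\,\mB_2^{-(n+1)}$, whose operator norm is at most $\twonorm{W}\sum_{n\ge0}\rho^n(\rho+\nu)^{-(n+1)}=\twonorm{W}/\nu$ (this is Bhatia's Theorem~VII.2.12 route, and it works for the Frobenius norm too). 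With that replacement --- and with the interval-separation hypothesis made explicit, since the constant-$1$ operator-norm claim is false for arbitrary separated sets --- your proof is complete and covers every use of the theorem in this paper.
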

\begin{proof}[Proof of Lemma~\ref{lem:daviskahan-approx}]
Let $\A = \Us \So \Ustr + \Usperp \Sh \Usperptr$ be the EVD of $\A$ with $P_k\left(\A\right) = \Us \So \Ustr $. Similarly, let $\A + \Em = \U \Lo \Utr + \Uperp \Lhat \Uperptr$ denote the EVD of $\A + \Em$ with $P_k\left(\A+\Em\right) = \U \Lo \Utr$. Expanding $P_k\left(\A+\Em\right)$ into components along $\Us$ and orthogonal to it, we have:
\begin{align*}
\U \Lo \Utr = \Us \Ustr \U \Lo \Utr \Us \Ustr + \Usperp \Usperptr \U \Lo \Utr \Us \Ustr + \U \Lo \Utr \Usperp \Usperptr.
\end{align*}
Now, 
\begin{align}
&\frob{P_k\left(\A+\Em\right) - P_k\left(\A\right)} \nonumber\\
&= \frob{\Us \Ustr \U \Lo \Utr \Us \Ustr + \Usperp \Usperptr \U \Lo \Utr \Us \Ustr + \U \Lo \Utr \Usperp \Usperptr - \Us \So \Ustr} \nonumber\\
&\leq \frob{\Us \Ustr \U \Lo \Utr \Us \Ustr - \Us \So \Ustr} + \frob{\Usperp \Usperptr \U \Lo \Utr \Us \Ustr} + \frob{ \U \Lo \Utr \Usperp \Usperptr } \nonumber\\
&\leq \frob{\Us \Ustr \U \Lo \Utr \Us \Ustr - \Us \So \Ustr} + \frob{\Usperp \Usperptr \U \Lo \Utr} + \frob{ \U \Lo \Utr \Usperp \Usperptr } \nonumber\\
&= \frob{\Us \Ustr \U \Lo \Utr \Us \Ustr - \Us \So \Ustr} + 2 \frob{ \U \Lo \Utr \Usperp \Usperptr } \nonumber \\
&\leq \frob{\Us \Ustr \U \Lo \Utr \Us \Ustr + \Us \Ustr \Uperp \Lhat \Uperptr \Us \Ustr - \Us \So \Ustr} \nonumber \\
&\quad + \frob{\Us \Ustr \Uperp \Lhat \Uperptr \Us \Ustr}
+ 2 \frob{ \Lo \Utr \Usperp } \nonumber \\
&= \frob{\Us \Ustr \Em \Us \Ustr } + \frob{\Ustr \Uperp \Lhat \Uperptr \Us }
+ 2 \frob{ \Lo \Utr \Usperp } \nonumber \\
&\leq \frob{\Em} + \frob{\Ustr \Uperp \Lhat \Uperptr \Us }
+ 2 \frob{ \Lo \Utr \Usperp }
\label{eqn:davis-kahan-bd1}
\end{align}

Before going on to bound the terms in \eqref{eqn:davis-kahan-bd1},
let us make some observations. We first use Lemma~\ref{lem:weyl-perturbation} to conclude that
\begin{align*}
\frac{3}{4}\abs{\sk[i]} \leq \abs{\lami} \leq \frac{5}{4} \abs{\sk[i]}, \mbox{ and }
\abs{\lhati[k+i]} \leq \frac{\abs{\sk}}{2}.
\end{align*}
Applying Theorem~\ref{thm:davis-kahan-bhatia} with $S_1 = \left[ \frac{-\abs{\sk}}{2}, \frac{\abs{\sk}}{2} \right]$ and $S_2 = \left(-\infty, \frac{-3\abs{\sk[i]}}{4}\right] \cup \left[ \frac{3\abs{\sk[i]}}{4},\infty\right)$, with separation parameter $\nu = \frac{\abs{\sk[i]}}{4}$, we see that
\begin{align}
\twonorm{\uitr \Usperp} &\leq \frac{4}{\abs{\sk[i]}}\twonorm{\Em},
\mbox{ and }\label{eqn:uiusperp} \\
\frob{\Uperptr \Us} &\leq \frac{4}{\abs{\sk}} \frob{\Em}. \label{eqn:uperpus}
\end{align}

We are now ready to bound the last two terms in the right hand side of \eqref{eqn:davis-kahan-bd1}. Firstly, we have:
\begin{align*}
\frob{\Ustr \Uperp \Lhat \Uperptr \Us }
\leq \twonorm{\Lhat } \twonorm{\Ustr \Uperp} \frob{\Uperptr\Us}
\leq \abs{\lhati[k+1]} \frob{\Uperptr\Us}^2 \leq 2 \frob{\Em},
\end{align*}
where the last step follows from~\eqref{eqn:uperpus} and the assumption on $\frob{\Em}$. For the other term, we have:
\begin{align*}
\frob{ \Lo \Utr \Usperp }^2 = \sum_i \lami^2 \twonorm{\uitr \Usperp}^2 \leq \frac{25}{16} \sum_i \sk[i]^2 \frac{16 \twonorm{\Em}^2}{\sk[i]^2} = 25 k \twonorm{\Em}^2,
\end{align*}
where we used~\eqref{eqn:uiusperp}. Combining the above two inequalities with~\eqref{eqn:davis-kahan-bd1} proves the lemma.
\end{proof}
Finally, we present proofs for Lemma~\ref{lem:bound_tgt1}, Lemma~\ref{lem:bound_tgt2}, Lemma~\ref{lem:bound_tgt3}. 
\begin{proof}[Proof of Lemma~\ref{lem:bound_tgt1}]
Using Theorem 1 by \cite{BhojanapalliJ14}, the followings $\forall \widehat{\Sigma}$ (w.p. $\geq 1-n^{-10-\alpha}$): 
  $$\twonorm{\Us \Sh \Ustr - \frac{1}{p}\Om{\Us \Sh \Ustr}}\leq \frac{\mu^2r}{\sqrt{np}}\|\Sh\|_2\leq \frac{1}{\sqrt{r\cdot C\cdot \alpha}\log n}\|\Sh\|_2.$$
Lemma now follows by using the assumed value of $p$ in the above bound along with the fact that $\left(\Us \Sh \Ustr - \frac{1}{p}\Om{\Us \Sh \Ustr}\right)\Us$ is a rank-$r$ matrix. 
\end{proof}
\begin{proof}[Proof of Lemma~\ref{lem:bound_tgt2}]
  Let $H=\frac{1}{\beta}\left(\uoi[j] \uoitr - \frac{1}{p} P_{\Omega}\left( \uoi[j] \uoitr \right)\right)$, where $\beta=\frac{2\mu^2r}{\sqrt{n\cdot p}}$. Then, using Lemma~\ref{lem:sat-defn}, $H$ satisfies the conditions of Definition~\ref{defn:moment-condns}. Lemma now follows by using Lemma~\ref{lem:cond_spec1} and using $p$ as given in the lemma statement. 
\end{proof}
\begin{proof}[Proof of Lemma~\ref{lem:bound_tgt3}]
  Let $\delta_{ij}=\mathbb{I}[(i,j)\in \Omega]$. Then, \begin{equation}\label{eq:tgt3_1}\ip{\uoi}{\uoi[j]}-\frac{1}{p} \sum_{(s,l) \in \Omega} \left(\uoi\right)_l \left(\uoi[j]\right)_l=\sum_{l}(1-\frac{\delta_{sl}}{p})\left(\uoi\right)_l \left(\uoi[j]\right)_l=\sum_{l} B_{l},\end{equation}
where $\E[B_{l}]=0$, $|B_{l}|\leq \frac{2\mu^2r}{n\cdot p}$, and $\sum_\E[B_{l}^2]=\frac{\mu^2r}{n\cdot p}$. Lemma follows by using  Bernstein inequality (given below) along with the sampling probability $p$ specified in the lemma. 
\end{proof}

\begin{lemma}[Bernstein Inequality]
 Let $b_{i}$ be a set of independent bounded random variables, then the following holds $\forall\ t>0$: 
$$Pr\left(\left|\sum_{i=1}^n b_i-\E[\sum_{i=1}^n b_i]\right|\geq t\right)\leq \exp\left(-\frac{t^2}{\E[\sum_i b_i^2]+t\max_i|b_i|/3}\right).$$
\end{lemma}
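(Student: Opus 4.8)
The plan is to prove this as the classical Bernstein inequality, via the exponential moment (Chernoff) method, so that the only real work is estimating the moment generating function of a single bounded, mean-zero summand. First I would center the variables: set $Y_i \defas b_i - \E[b_i]$, so the $Y_i$ are independent with $\E[Y_i] = 0$ and, writing $M \defas \max_i |b_i|$, we have $|Y_i| \le 2M$. It then suffices to bound $\Pr\!\left(\sum_i Y_i \ge t\right)$, since applying the same estimate to the $-Y_i$ controls the lower tail and a union bound over the two tails handles the absolute value.

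The heart of the argument is the following moment generating function bound for each $i$, valid for $0 < \lambda < 3/M$: expanding $e^{\lambda Y_i} = 1 + \lambda Y_i + \sum_{k\ge 2}\lambda^k Y_i^k/k!$, taking expectations (the linear term vanishes since $\E[Y_i]=0$), and using $|Y_i^k| \le Y_i^2 M^{k-2}$ together with the elementary bound $k! \ge 2\cdot 3^{k-2}$, I would obtain
\[
\E\!\left[e^{\lambda Y_i}\right] \le 1 + \frac{\lambda^2 \E[Y_i^2]}{2}\sum_{j\ge 0}\left(\frac{\lambda M}{3}\right)^{j} = 1 + \frac{\lambda^2\E[Y_i^2]/2}{1-\lambda M/3} \le \exp\!\left(\frac{\lambda^2\E[Y_i^2]/2}{1-\lambda M/3}\right).
\]
Multiplying over $i$ by independence and writing $v \defas \sum_i \E[Y_i^2]$, Markov's inequality applied to $e^{\lambda \sum_i Y_i}$ gives $\Pr\!\left(\sum_i Y_i \ge t\right) \le \exp\!\left(-\lambda t + \frac{\lambda^2 v/2}{1-\lambda M/3}\right)$ for every $\lambda \in (0, 3/M)$. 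Choosing $\lambda = t/(v + Mt/3)$, which indeed lies in $(0,3/M)$, and substituting yields the standard one-sided estimate $\exp\!\left(-\tfrac{t^2/2}{v+Mt/3}\right)$, hence the two-sided bound up to a factor $2$.

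It remains to match the form in the statement: since $v = \sum_i \operatorname{Var}(b_i) \le \E\!\left[\sum_i b_i^2\right]$, replacing $v$ by $\E[\sum_i b_i^2]$ and $M$ by $\max_i|b_i|$ (up to the harmless centering factor) only enlarges the denominator, so the displayed inequality follows with appropriate absolute constants; in every application in this paper the summands are many and individually tiny, so these constants are absorbed into the global constant $C$. I do not anticipate a genuine obstacle here — the one delicate point is the elementary MGF estimate above (in particular the factorial bound $k! \ge 2\cdot 3^{k-2}$ that produces the geometric series), and the only bookkeeping subtlety is the factor-of-two loss incurred in passing from $b_i$ to the centered $Y_i$, which affects nothing but the numerical constants.
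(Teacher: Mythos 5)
The paper does not prove this lemma at all: it is stated as a classical fact (alongside the matrix Bernstein inequality, which is attributed to Tropp) and used only as a black box, so there is no in-paper argument to compare against. Your proposal is simply the standard Chernoff/moment-generating-function proof of Bernstein's inequality, and it is sound: the centering, the factorial bound $k!\ge 2\cdot 3^{k-2}$ giving $\E\bigl[e^{\lambda Y_i}\bigr]\le \exp\bigl(\tfrac{\lambda^2\E[Y_i^2]/2}{1-\lambda M/3}\bigr)$, and the choice $\lambda = t/(v+Mt/3)$ are all correct, yielding the usual two-sided bound $2\exp\bigl(-\tfrac{t^2/2}{v+Mt/3}\bigr)$ with $v=\sum_i \E[(b_i-\E b_i)^2]$. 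The one point worth flagging is exactly the one you flag yourself: this does not literally reproduce the displayed inequality, which omits the customary prefactor $2$ and the factor $\tfrac12$ in the exponent (and your centered variables are bounded by $2\max_i|b_i|$, not $\max_i|b_i|$); replacing $v$ by $\E[\sum_i b_i^2]$ only enlarges the denominator and hence goes in the right direction, but the prefactor and the $\tfrac12$ do not, so the lemma as printed holds only up to absolute constants in the exponent. Since the paper's own statement is already a constant-loose rendering of Bernstein and every invocation (Lemma~\ref{lem:bound_tgt3}) has polylogarithmic slack absorbed into the global constant $C$, this discrepancy is immaterial to the paper, but if you wanted the displayed form verbatim you would either have to keep the $\tfrac12$ and prefactor $2$ or note explicitly that the constants are not claimed to be sharp.
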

\begin{lemma}[Matrix Bernstein Inequality (Theorem 1.4 of \cite{tropp})]\label{lem:matbern}
 Let $B_{i}\in \R^{n\times n}$ be a set of independent bounded random matrices, then the following holds $\forall\ t>0$: 
 $$Pr\left(\twonorm{\sum_{i=1}^n B_i-\E[\sum_{i=1}^n B_i]}\geq t\right)\leq n\exp\left(-\frac{t^2}{\sigma^2+tR/3}\right),$$
where $\sigma^2=\E\left[\sum_i B_i^2\right]$ and $R=\max_i \|B_i\|_2$. 
\end{lemma}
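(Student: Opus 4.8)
This statement is exactly Theorem 1.4 of \cite{tropp}, so inside the paper one simply cites it; if one wanted to prove it from scratch, the plan is to use the matrix Laplace transform method of Ahlswede--Winter and Tropp, which reduces the matrix inequality to scalar calculus plus one deep operator-theoretic fact. First I would reduce to the centered Hermitian case: replace each $B_i$ by $B_i-\E[B_i]$ (harmless, and in every application of this lemma in the paper the summands are already centered), then apply the Hermitian dilation $B_i\mapsto\left[\begin{smallmatrix}0&B_i\\ B_i^\top&0\end{smallmatrix}\right]$, which preserves operator norm, converts $\twonorm{\sum_i B_i}$ into $\lambda_{\max}$ of the dilated sum, and leaves the variance proxy unchanged; hence it suffices to bound $\Pr[\lambda_{\max}(S)\ge t]$ for $S\defas\sum_i B_i$ with symmetric, mean-zero, bounded $B_i$.

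Next I would apply the exponential Markov bound: for any $\theta>0$, since $e^{\theta S}\succeq 0$ we have $e^{\theta\lambda_{\max}(S)}=\lambda_{\max}(e^{\theta S})\le\mathrm{tr}(e^{\theta S})$, so $\Pr[\lambda_{\max}(S)\ge t]\le e^{-\theta t}\,\E[\mathrm{tr}\,e^{\theta S}]$. The main step, and the one I expect to be the real obstacle in any self-contained treatment, is to ``linearize'' the trace exponential over the independent summands: invoking Lieb's concavity theorem (that $A\mapsto\mathrm{tr}\exp(H+\log A)$ is concave on positive-definite $A$) together with Jensen's inequality, and peeling summands off one at a time, one obtains the master bound $\E[\mathrm{tr}\,e^{\theta S}]\le\mathrm{tr}\exp\!\big(\sum_i\log\E[e^{\theta B_i}]\big)$. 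This is the only genuinely non-elementary ingredient; absent Lieb's theorem one must retreat to the weaker Golden--Thompson argument of Ahlswede--Winter, which loses a factor in the variance term.

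Finally I would bound each matrix cumulant generating function by a scalar estimate lifted to matrices: since $x\mapsto(e^{\theta x}-1-\theta x)/x^2$ is increasing, for symmetric mean-zero $B$ with $\twonorm{B}\le R$ one gets $\E[e^{\theta B}]\preceq I+\frac{e^{\theta R}-1-\theta R}{R^2}\E[B^2]\preceq\exp\!\big(g(\theta)\E[B^2]\big)$ with $g(\theta)=\tfrac{\theta^2/2}{1-\theta R/3}$ on $0<\theta<3/R$, using $e^{\theta R}-1-\theta R\le\tfrac{(\theta R)^2/2}{1-\theta R/3}$ and operator monotonicity of $\log$. Summing, $\sum_i\log\E[e^{\theta B_i}]\preceq g(\theta)\sum_i\E[B_i^2]$, and then monotonicity of $\mathrm{tr}\exp(\cdot)$ with $\mathrm{tr}\,e^{M}\le n\,e^{\twonorm{M}}$ gives $\E[\mathrm{tr}\,e^{\theta S}]\le n\,e^{g(\theta)\sigma^2}$, where $\sigma^2=\twonorm{\E[\sum_i B_i^2]}$. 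Thus $\Pr[\lambda_{\max}(S)\ge t]\le n\exp\!\big(-\theta t+g(\theta)\sigma^2\big)$, and choosing $\theta=t/(\sigma^2+Rt/3)$ produces the advertised Bernstein-type tail (up to the numerical constant in the exponent, and absorbing the harmless factor of two from the dilation / two-sided union bound). This recovers Theorem 1.4 of \cite{tropp}.
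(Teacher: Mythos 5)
The paper gives no proof of this lemma at all---it is quoted verbatim as Theorem~1.4 of \cite{tropp}---so your opening observation that a citation suffices is exactly what the paper does. Your optional from-scratch sketch is the standard Tropp argument (Hermitian dilation, matrix Laplace transform, Lieb's concavity theorem for the master bound $\E[\mathrm{tr}\,e^{\theta S}]\le\mathrm{tr}\exp(\sum_i\log\E[e^{\theta B_i}])$, and the scalar Bernstein estimate on each matrix MGF) and is sound; note only that, as you hint, the paper's stated form is slightly imprecise (Tropp's exponent carries a factor $1/2$, and $\sigma^2$ should be $\twonorm{\E[\sum_i B_i^2]}$ rather than the matrix $\E[\sum_i B_i^2]$ itself), and your version states these correctly.
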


\section{Proof of Lemma~\ref{lem:Htp-spectralnorm_app}}
\label{app:Hpbound}

We will prove the statement for $r = 1$. The lemma can be proved by taking a union bound over all $r$.
In order to prove the lemma, we will calculate a high order moment of the random variable
\begin{align*}
\wX_a \defas {\ip{\e_1}{\wH^a \u}},
\end{align*}
and then use Markov inequality.
We use the following notation which is mostly consistent with Lemma~$6.5$ of \cite{ErdosKYY2012}. We abbreviate
$(i,j)$ as $\alpha$ and denote $\wh{h}_{ij}$ by $\wh{h}_{\alpha}$. We further let
\begin{align*}
  B_{(i,j)(k,l)} \defas \delta_{jk}.
\end{align*}
With this notation, we have:
\begin{align*}
\wX_a = \sum_{\stackrel{\alpha_1,\cdots,\alpha_a}{\alpha_1(1)=1}} B_{\alpha_1 \alpha_2}
	\cdots B_{\alpha_{a-1} \alpha_a} {\wh{h}_{\alpha_1}\cdots \wh{h}_{\alpha_a}} u_{\alpha_a(2)}.
\end{align*}
We now split the matrix $\wH$ into two parts $\Hmat$ and $\Hmat'$ which correspond to the
upper triangular and lower triangular parts of $\wH$.
This means
\begin{align}
\wX_a = \sum_{\stackrel{\alpha_1,\cdots,\alpha_a}{\alpha_1(1)=1}} B_{\alpha_1 \alpha_2}
	\cdots B_{\alpha_{a-1} \alpha_a} {\left(h_{\alpha_1} + h'_{\alpha_1}\right)\cdots \left(h_{\alpha_a} + h'_{\alpha_a}\right)} u_{\alpha_a(2)}.
\label{eqn:split-up-low}
\end{align}
The above summation has $2^a$ terms, of which we consider only
\begin{align*}
{X}_a \defas \sum_{\stackrel{\alpha_1,\cdots,\alpha_a}{\alpha_1(1)=1}} B_{\alpha_1 \alpha_2}
	\cdots B_{\alpha_{a-1} \alpha_a} {h_{\alpha_1} \cdots h_{\alpha_a}} u_{\alpha_a(2)}.
\end{align*}
The resulting factor of $2^a$ does not change the result.

Abbreviating $\balpha \defas (\alpha_1,\cdots,\alpha_a)$, and
\begin{align*}
  \zeta_{\balpha} \defas B_{\alpha_1 \alpha_2}
	\cdots B_{\alpha_{a-1} \alpha_a} {h_{\alpha_1}\cdots h_{\alpha_a}} u_{\alpha_a(2)},
\end{align*}
we can write
\begin{align*}
  X_a = \sum_{\balpha} \zeta_{\balpha},
\end{align*}
where the summation runs only over those $\balpha$ such that $\alpha_1(1) = 1$.

Calculating the $k^{\textrm{th}}$ moment expansion of $X_a$ for some even number $k$, we obtain:
\begin{align}
\expec{X_a^k} = \sum_{\balpha^1,\cdots,\balpha^k} \expec{\zeta_{\balpha^1}\cdots
	\zeta_{\balpha^k}}. \label{eqn:Xpk_sum}
\end{align}

%
For each valid $\balpha = (\balpha^s) = (\alpha_l^s)$, we define the partition $\Gamma(\balpha)$ of the index set
$\set{(s,l): s \in [k]; l \in [a]}$, where $(s,l)$ and $(s',l')$ are in the same equivalence class if
$\alpha_l^s = \alpha_{l'}^{s'}$. We first bound the contribution of all $\balpha$ corresponding to a partition $\Gamma$ in
the summation \eqref{eqn:Xpk_sum} and then bound the total number of partitions $\Gamma$ possible.
Since each $h_{\alpha}$ is centered, we can conclude that any partition $\Gamma$ that has a non-zero contribution to
the summation in \eqref{eqn:Xpk_sum} satisfies:
\begin{itemize}
  \item[(*)]	each equivalence class of $\Gamma$ contains at least two elements.
\end{itemize}
We further bound the summation in \eqref{eqn:Xpk_sum} by taking absolute values of the summands
\begin{align}
  {\expec{X_a^k}} &\leq \sum_{\balpha^1,\cdots,\balpha^k} \expec{\abs{\zeta_{\balpha^1}} \cdots
	\abs{\zeta_{\balpha^k}}},
	\label{eqn:Xpk_sumbound}
\end{align}
where the summation runs over $(\balpha^1,\cdots,\balpha^k)$ that correspond to valid partitions $\Gamma$.
Fixing one such partition $\Gamma$, we bound the contribution to \eqref{eqn:Xpk_sumbound} of all the terms $\balpha$ such
that $\Gamma(\balpha) = \Gamma$.

We denote $G \equiv G(\Gamma)$ to be the graph constructed from $\Gamma$ as follows. The vertex set $V(G)$ is given
by the equivalence classes of $\Gamma$. For every $(s,l)$, we have an edge between the equivalence class of $(s,l)$ and the
equivalence class of $(s,l+1)$.

Each term in \eqref{eqn:Xpk_sumbound} can be bounded as follows:
\begin{align*}
  \expec{\abs{\zeta_{\balpha^1}} \cdots
	\abs{\zeta_{\balpha^k}} }
	&\leq {\infnorm{\u}^k} \left(\prod_{s=1}^k \prod_{l=1}^{a-1} B_{\alpha_l^s \alpha_{l+1}^s}\right)
		\expec{\prod_{s=1}^k \left(\prod_{l=1}^{a}\abs{h_{\alpha_l^s}} \right)} \\
	&\leq \infnorm{\u}^k \left(\prod_{s=1}^k \prod_{l=1}^{a-1} B_{\alpha_l^s \alpha_{l+1}^s}\right)
		\prod_{\gamma \in V(G)} \frac{1}{n},
\end{align*}
where the last step follows from property $(*)$ above and Definition~\ref{defn:moment-condns}.

Using the above, we can bound \eqref{eqn:Xpk_sumbound} as follows:
\begin{align*}
  {\expec{X_a^k}} &\leq \frac{\infnorm{\u}^k }{n^{v}} \sum_{\alpha_1,\cdots,\alpha_v}
	\left(\prod_{\set{\gamma,\gamma'}\in E(G)} B_{\alpha_{\gamma}\alpha_{\gamma'}}\right).
\end{align*}
where $v \defas \abs{V(G)}$ denotes the number of vertices in $G$.

Factorizing the above summation over different components of $G$, we obtain
\begin{align}\label{eqn:Xpk_sumbound2}
  {\expec{X_a^k}} &\leq \frac{\infnorm{\u}^k }{n^{v}} \prod_{j=1}^l \sum_{\alpha_1,\cdots,\alpha_{v_j}}
	\left(\prod_{\set{\gamma,\gamma'}\in E(G_j)} B_{\alpha_{\gamma}\alpha_{\gamma'}}\right),
\end{align}
where $l$ denotes the number of connected components of $G$, $G_j$ denotes the $j^{\textrm{th}}$ component of $G$, and
$v_j$ denotes the number of vertices in $G_j$.
We will now bound terms corresponding to one connected component at a time. Pick a connected component $G_j$. Since
$\alpha_1^s(1) = 1$ for every $s \in [a]$, we know that there exists a vertex $\alpha_{\gamma} \in G_j$ such that
$\alpha_{\gamma}(1) = 1$. Pick one such vertex as a root vertex and create a spanning tree $T_j$ of $G_j$. We use the
bound $B_{\alpha_{\gamma}\alpha_{\gamma'}} \leq 1$ for every $\set{\gamma,\gamma'} \in E_j \setminus T_j$.
The remaining summation $\sum_{\alpha_1,\cdots,\alpha_{v_j}} \left(\prod_{\set{\gamma,\gamma'}\in T_j} B_{\alpha_{\gamma}\alpha_{\gamma'}}\right)$
can be calculated bottom up from leaves to the root. Since
\begin{align*}
  \sum_{\alpha_{\gamma'}} B_{\alpha_{\gamma}\alpha_{\gamma'}} = n, \; \forall \; \gamma,
\end{align*}
we obtain
\begin{align*}
  \sum_{\alpha_1,\cdots,\alpha_{v_j}}
	\left(\prod_{\set{\gamma,\gamma'}\in E(G_j)} B_{\alpha_{\gamma}\alpha_{\gamma'}}\right)
	&\leq n^{v_j}.
\end{align*}
Plugging the above in \eqref{eqn:Xpk_sumbound2} gives us
\begin{align*}
  {\expec{X_a^k}} &\leq \frac{\infnorm{\u}^k }{n^{v}} n^{\sum_j v_j} = \infnorm{u}^k.
\end{align*}
Noting that the number of partitions $\Gamma$ is at most $(ka)^{ka}$, we obtain the bound
\begin{align*}
  {\expec{X_a^k}} &\leq \left({\infnorm{\u} \left(ka\right)^a}\right)^k.
\end{align*}

Choosing $k = 2\ceil{\frac{\log n}{a}} $ and applying $k^{\textrm{th}}$ moment Markov inequality, we obtain
\begin{align*}
\prob{\abs{X_a} > { \left(c \log n\right)^a \infnorm{\u}}} \leq {\expec{\abs{X_a}^k}} \left(\frac{1}{ \left(c \log n\right)^a \infnorm{\u}}\right)^k
	&\leq \left(\frac{ka}{c \log n}\right)^{ka} \leq n^{-2 \log \frac{c}{2}}.
\end{align*}
Going back to \eqref{eqn:split-up-low}, we have:
\begin{align*}
\prob{\abs{\widehat{X}_a} > { \left(c \log n\right)^a \infnorm{\u}}}
	&\leq 2^a \prob{\abs{{X}_a} > { \left(\frac{c}{2} \log n\right)^a \infnorm{\u}}} \\
	&\leq 2^a {\expec{\abs{X_a}^k}} \left(\frac{1}{ \left(\frac{c}{2} \log n\right)^a \infnorm{\u}}\right)^k \\
	&\leq 2^a \left(\frac{ka}{c \log n}\right)^{ka} \leq n^{-2 \log \frac{c}{4}}.
\end{align*}

Applying a union bound now gives us the result.


\section{Empirical Results}\label{sec:exp}
\begin{figure}[t]
  \centering
  \begin{tabular}[t]{cccc}
    \hspace*{-10pt}\includegraphics[width=.25\textwidth]{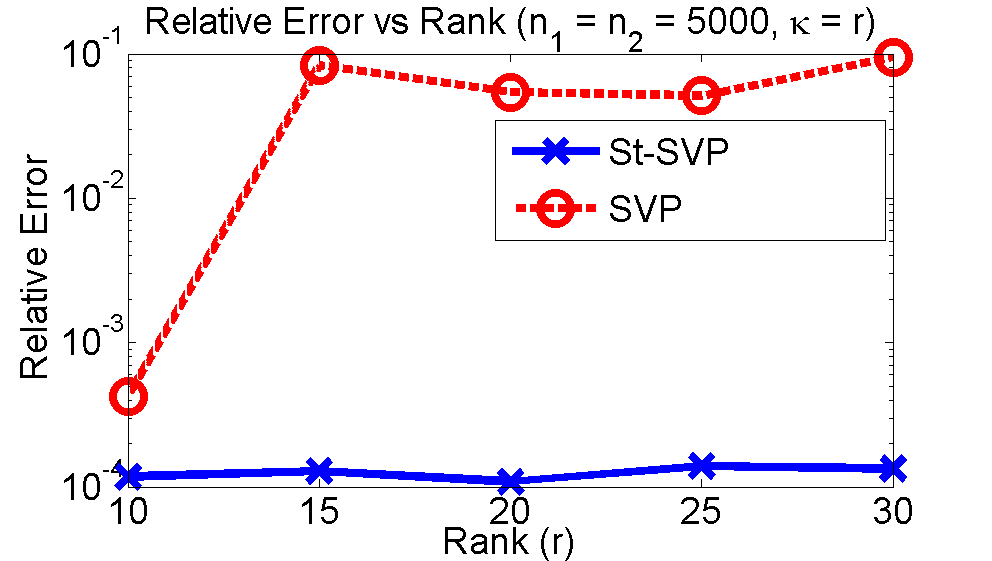}&
    \hspace*{-10pt}\includegraphics[width=.25\textwidth]{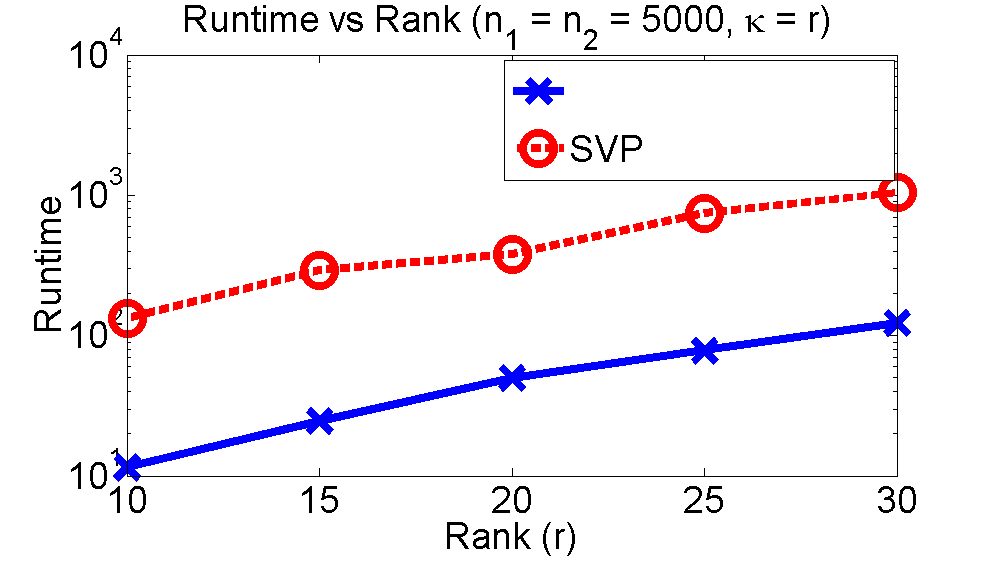}&
   \hspace*{-10pt} \includegraphics[width=.25\textwidth]{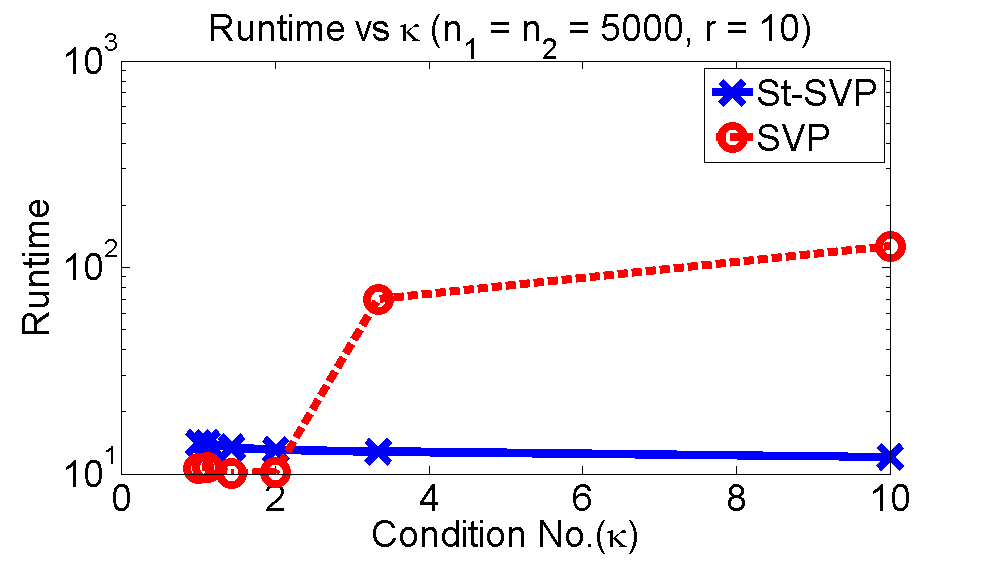}&
    \hspace*{-10pt}\includegraphics[width=.25\textwidth]{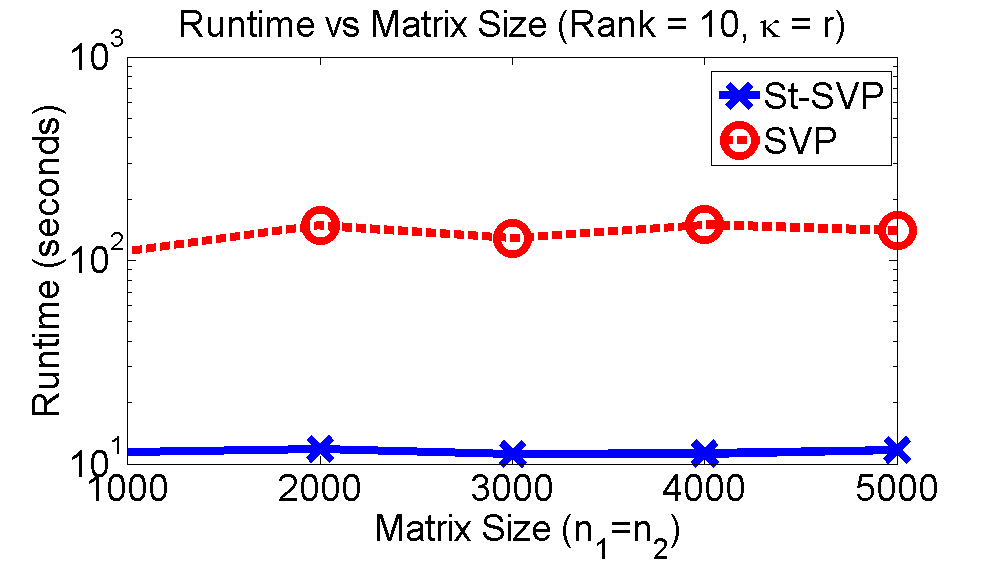}\\
(a)&(b)&(c)&(d)\vspace*{-5pt}
  \end{tabular}
  \caption{\svp~ vs \stsvp: simulations on synthetic datasets. a), b): recovery error and run time of the two methods
for varying rank. c): run time required by \stsvp~ and \svp~ with varying condition number.
d): run time of both the methods with varying matrix size.}
  \label{fig:exps}\vspace*{-2pt}
\end{figure}
In this section, we compare the performance of \stsvp~ with \svp~ on synthetic examples.
We do not however include comparison to other matrix completion methods like nuclear norm minimization
or alternating minimization; see \cite{JainMD10} for a comparison of \svp~ with those methods.

We implemented both the methods in Matlab and all the results are averaged over 5 random trials.
In each trial we generate a random low rank matrix
and observe $|\Omega|=5 (n_1+n_2) r \log(n_1+n_2)$ entries from it uniformly at random.

In the first experiment, we fix the matrix size ($n_1=n_2=5000$) and generate random matrices with varying rank $r$.
We choose the first singular value to be $1$ and the remaining ones to be $1/r$,
giving us a condition number of $\kappa=r$. Figure~\ref{fig:exps} (a) \& (b) show the error in recovery and the run time 
of the two methods, where we define the recovery error as $\twonorm{\widehat{\M}-\M} / \twonorm{\M}$.
We see that \stsvp~ recovers the underlying matrix much more accurately as compared to \svp.
Moreover, \stsvp~ is an order of magnitude faster than \svp. 

In the next experiment, we vary the condition number of the generated matrices. Interestingly, for small
$\kappa$, both \svp~ and \stsvp~ recover the underlying matrix in similar time. However, for larger $\kappa$,
the running time of \svp~ increases significantly and is almost two orders of magnitude larger than that of \stsvp. 
Finally, we study the two methods with varying matrix sizes while keeping all the other parameters fixed
($r=10$, $\kappa=1/r$). Here again, \stsvp~ is much faster than \svp.



\begin{thebibliography}{HMRW14}

\bibitem[Bha97]{bhatia}
Rajendra Bhatia.
\newblock {\em Matrix Analysis}.
\newblock Springer, 1997.

\bibitem[BJ14]{BhojanapalliJ14}
Srinadh Bhojanapalli and Prateek Jain.
\newblock Universal matrix completion.
\newblock In {\em ICML}, 2014.

\bibitem[BK07]{BellK2007}
Robert Bell and Yehuda Koren.
\newblock Scalable collaborative filtering with jointly derived neighborhood
  interpolation weights.
\newblock In {\em ICDM}, pages 43--52, 2007.

\bibitem[CBSW14]{ChenBSW14}
Yudong Chen, Srinadh Bhojanapalli, Sujay Sanghavi, and Rachel Ward.
\newblock Coherent matrix completion.
\newblock In {\em Proceedings of The 31st International Conference on Machine
  Learning}, pages 674--682, 2014.

\bibitem[CR09]{CandesR2007}
Emmanuel~J. Cand{\`e}s and Benjamin Recht.
\newblock Exact matrix completion via convex optimization.
\newblock {\em Foundations of Computational Mathematics}, 9(6):717--772,
  December 2009.

\bibitem[CT09]{CandesT2009}
Emmanuel~J. Cand{\`e}s and Terence Tao.
\newblock The power of convex relaxation: Near-optimal matrix completion.
\newblock {\em IEEE Trans. Inform. Theory}, 56(5):2053--2080, 2009.

\bibitem[EKYY13]{ErdosKYY2012}
L{\'a}szl{\'o} Erd{{o}}s, Antti Knowles, Horng-Tzer Yau, and Jun Yin.
\newblock Spectral statistics of {E}rd{{o}}s--{R}{\'e}nyi graphs {I}: Local
  semicircle law.
\newblock {\em The Annals of Probability}, 41(3B):2279--2375, 2013.

\bibitem[GL11]{GleichL11}
David~F. Gleich and Lek-Heng Lim.
\newblock Rank aggregation via nuclear norm minimization.
\newblock In {\em KDD}, pages 60--68, 2011.

\bibitem[Har14]{Hardt2013}
Moritz Hardt.
\newblock Understanding alternating minimization for matrix completion.
\newblock In {\em FOCS}, 2014.

\bibitem[HCD12]{HsiehCD12}
Cho-Jui Hsieh, Kai-Yang Chiang, and Inderjit~S. Dhillon.
\newblock Low rank modeling of signed networks.
\newblock In {\em KDD}, pages 507--515, 2012.

\bibitem[HMRW14]{HardtMRW14}
Moritz Hardt, Raghu Meka, Prasad Raghavendra, and Benjamin Weitz.
\newblock Computational limits for matrix completion.
\newblock In {\em COLT}, pages 703--725, 2014.

\bibitem[HW14]{HardtW14}
Moritz Hardt and May Wootters.
\newblock Fast matrix completion without the condition number.
\newblock In {\em COLT}, 2014.

\bibitem[JMD10]{JainMD10}
Prateek Jain, Raghu Meka, and Inderjit~S. Dhillon.
\newblock Guaranteed rank minimization via singular value projection.
\newblock In {\em NIPS}, pages 937--945, 2010.

\bibitem[JNS13]{JainNS2013}
Prateek Jain, Praneeth Netrapalli, and Sujay Sanghavi.
\newblock Low-rank matrix completion using alternating minimization.
\newblock In {\em Proceedings of the 45th annual ACM Symposium on theory of
  computing}, pages 665--674. ACM, 2013.

\bibitem[Kes12]{Keshavan2012}
Raghunandan~H. Keshavan.
\newblock Efficient algorithms for collaborative filtering.
\newblock Phd Thesis, Stanford University, 2012.

\bibitem[KMO10]{KeshavanOM2009}
Raghunandan~H. Keshavan, Andrea Montanari, and Sewoong Oh.
\newblock Matrix completion from a few entries.
\newblock {\em IEEE Transactions on Information Theory}, 56(6):2980--2998,
  2010.

\bibitem[MJD09]{MekaJD09}
Raghu Meka, Prateek Jain, and Inderjit~S. Dhillon.
\newblock Matrix completion from power-law distributed samples.
\newblock In {\em NIPS}, 2009.

\bibitem[Rec09]{Recht2009}
Benjamin Recht.
\newblock A simple approach to matrix completion.
\newblock {\em JMLR}, 2009.

\bibitem[RR13]{RechtR2013}
Benjamin Recht and Christopher R{\'e}.
\newblock Parallel stochastic gradient algorithms for large-scale matrix
  completion.
\newblock {\em Mathematical Programming Computation}, 5(2):201--226, 2013.

\bibitem[Tro12]{tropp}
Joel~A. Tropp.
\newblock User-friendly tail bounds for sums of random matrices.
\newblock {\em Foundations of Computational Mathematics}, 12(4):389--434, 2012.

\end{thebibliography}
\end{document}